\newtheorem{Thm}{Theorem}[section]
\newtheorem{Lem}[Thm]{Lemma}
\newtheorem{ruleofthumb}[Thm]{Rule of Thumb}
\newtheorem{inductionlemma}[Thm]{Induction Lemma/Definition}
\newtheorem{Cor}[Thm]{Corollary}
\theoremstyle{definition}
\newtheorem{Def}[Thm]{Definition}
\newtheorem{Asm}[Thm]{Assumption}
\newtheorem{Exm}[Thm]{Example}
\newtheorem{Fact}[Thm]{Fact}
\newtheorem{Facts}[Thm]{Facts}
\theoremstyle{remark}
\newtheorem{Rem}[Thm]{Remark}
\numberwithin{equation}{section}
\newcommand{\IFF}{\ensuremath{\,\leftrightarrow\,} }
\newcommand{\DEFEQ}{\coloneqq}
\DeclareMathOperator{\labeledcoll}{labeledcoll}              
\DeclareMathOperator{\uncoll}{uncoll}              
\DeclareMathOperator{\cf}{cf}              
\DeclareMathOperator{\gen}{gen}              
\DeclareMathOperator{\dom}{dom}               
\DeclareMathOperator{\val}{val}
\DeclareMathOperator{\nor}{nor}
\DeclareMathOperator{\ro}{ro}
\DeclareMathOperator{\pos}{pos}
\DeclareMathOperator{\half}{half}
\DeclareMathOperator{\rk}{rk}                 
\newcommand{\pow}{\mathcal {P}}              
\newcommand{\card}[1]{\rvert#1\lvert}     
\newcommand{\cL}{\mathcal{L}} 
\newcommand{\al}[1]{\aleph_{#1}}          
\newcommand{\om}[1]{\omega_{#1}}         
\newcommand{\ho}{^{\omega}}                  
\newcommand{\ON}{\textrm{ON}}                  
\newcommand{\bS}[2]{\underaccent{\tilde}{\mathbf\Sigma}^#1_#2}
\newcommand{\lS}[2]{{\Sigma}^#1_#2}
\DeclareMathOperator{\trclos}{trans-clos}
\DeclareMathOperator{\ordclos}{ord-clos}
\DeclareMathOperator{\ordcol}{ord-col}
\DeclareMathOperator{\fclosure}{f-clos}
\DeclareMathOperator{\hco}{hco}
\newcommand{\mpar}{\mathfrak{p}}  
\newcommand{\inQ}{\varphi_{\in Q}}
\newcommand{\leqQ}{\varphi_{\leq Q}}
\newcommand{\ZFC}{\textrm{ZFC}}              
\newcommand{\ZFCx}{\textrm{ZFC}^\ast}        
\newcommand{\esm}{\prec}
\newcommand{\qb}{\textrm{``}}
\newcommand{\qe}{\textrm{''}}
\newcommand{\std}[1]{\check{#1}}
\newcommand{\forc}{\Vdash}
\newcommand{\incomp}{\perp}
\newcommand{\comp}{\parallel}
\newcommand{\n}[1]{\underaccent{\tilde}{#1}}
\newcommand{\ntau}{{\n{\tau}}}
\newcommand{\nsigma}{{\n{\sigma}}}
\begin{document}

\newcounter{tmp}
\subjclass[msc2000]{03E35,03E40}

\date{\today}

\title{Non elementary proper forcing}

\author{Jakob Kellner}

\address{Kurt G\"odel Research Center for Mathematical Logic\\
 Universit\"at Wien\\
 W\"ahringer Stra\ss e 25\\
 1090 Wien, Austria}
\email{kellner@fsmat.at}
\urladdr{http://www.logic.univie.ac.at/$\sim$kellner}

\begin{abstract}
  We introduce a simplified framework for ord-transitive models and
  Shelah's non elementary proper (nep)
  theory.  We also introduce a new construction for the countable
  support nep iteration.
\end{abstract}

\maketitle

\section*{Introduction}

In this paper, we introduce a simplified, self contained framework for forcing
with ord-transitive models and for non elementary proper (nep) forcing, and we
provide a new construction for the countable support nep-iteration. 

Judah and Shelah~\cite{MR973109} introduced the notion ``Suslin proper'': A
forcing notion $Q\subseteq \omega^\omega$ is Suslin proper if 
\begin{itemize}
  \item ``$p\in Q$'', ``$q\leq
p$'' and ``$q\incomp p$'' (i.e., $p$ and $q$ are incompatible) are all $\bS11$
statements (in some real parameter $r$), and if 
  \item for all contable
transitive models $M$ (of some
ZFC$^*$, a sufficiently large fragment of ZFC) that contain the parameter 
$r$ and for all $p\in Q^M:=Q\cap M$ there is a $q\leq p$ which is $M$-generic,
i.e., forces that the generic filter $G$ meets every maximal antichain $A\in M$ of $Q^M$.
\end{itemize}

We always assume that $H(\chi)$ satisfies ZFC$^*$ (for sufficiently large
regular cardinals $\chi$). Then every Suslin proper forcing $Q$ is proper.
(Given an elementary submodel $N$ of $H(\chi)$, apply the Suslin proper
property to the transitive collapse of $N$.) So Suslin proper is a
generalization of properness for nicely definable forcings.

Shelah~\cite{MR2115943} introduced a generalization of Suslin proper which he
called \emph{\bf non elementary proper (nep)}.  Actually, it is a
generalization in two ``dimensions'':

\begin{enumerate}
  \item[(a)]
    We do not require ``$p\in Q$'' etc. to be defined by $\bS11$ statements, but
    rather by some arbitrary formulas that happen to be sufficiently (upwards)
    absolute. 
  \item[(b)]
    We do not require $M$ to be a transitive model, but rather a so-called
    ord-transitive model (and we allow more general parameters $r$).
\end{enumerate}

Why is (b) useful? To ``approximate'' a forcing notion $Q$ by forcings $Q^M\in
M$, it is necessary that $Q$ is the union of $Q^M$ for all possible models $M$.
(This is of course the case if $Q$ is Suslin proper: any $p\in Q$ is a real,
and therefore element of some countable transitive $M$ and thus of $Q^M=Q\cap
M$.) So if we allow only countable transitive models $M$, we can only talk
about forcings $Q$ that are subsets of $H(\al1)$. Of course there are many
other interesting forcing notions, such as iterations of length $\ge \om2$,
products of size $\ge\al2$, creature forcing constructions etc. Adopting
ord-transitive models allows us to deal with some of these forcings as well.

The motivation for (a) is straightforward: This way, we can include forcing
notions that are not Suslin proper (such as Sacks forcing), while we can still
prove many of the results that hold for Suslin forcing notions.

To summarize:
\begin{itemize}
  \item Just as Suslin proper, nep has consequences that are not 
    satisfied by all proper forcing notions. So when we know that
    a forcing is nep and not just proper, we know more about its behavior.
    And while nep implies all of the useful consequences of Suslin proper,
    nep is more general (i.e., weaker): Some popular forcings are
    nep, but not Suslin proper (e.g., Sacks forcing).
    \\
    For example, let us say that ``$Q$ preserves non-meager'' 
    if $Q$ forces that the ground model
    reals ar not meager (and analogously we define ``$Q$ 
    preserves non-Lebesgue-null).
    Goldstern and Shelah~\cite[XVIII.3.11]{MR1623206}
    proved that the proper countable support iteration $(P_\alpha,Q_\alpha)$
    of non-meager preserving forcing notions preserves non-meager,
    provided that all $Q_\alpha$ are Suslin proper.
    \\
    Shelah and the author~\cite[9.4]{MR2155272} proved that the same 
    preservation theorem holds
    for Lebesgue-null instead of meager 
    and that it is sufficient to assume (nicely
    definable) nep instead of Suslin proper.
    This has been applied by Roslanowski and
    Shelah in~\cite{MR2214118}, which proves that 
    consistently every real function is continuous on a 
    set of positive outer Lebesgue measure.  
    (The preservation theorem is applied to a forcing that 
    is nep but not Suslin proper.)
  \item In particular, forcings that are not subsets of
    $H(\al1)$ can be nep; for example big countable support 
    products. In particular, we get a preservation theorem:
    under suitable assumptions,
    the countable support iteration of nep forcings is nep.
    \\
    An example of how this can be used is
    Lemma~\ref{lem:iterationapplication} of this paper.
    (This fact was used in~\cite[4.5]{MR2353856} to investigate
    Abelian groups).
\end{itemize}

Note that the Ord-transitive models mentioned in~(b) above can be useful in a
different (and simpler) setting as well: Instead of considering a forcing
definition and the realizations $Q^V$, $Q^M$ of this definition (in $V$ or a
countable model $M$, respectively), we can just use two arbitrary (and entirely
different) forcings $Q^V\in V$ and  $Q^M\in M$ and require that $Q^M$ is an
$M$-complete subforcing of $Q^V$. In the transitive case this concept has been
a central ingredient of Shelah's oracle-cc~\cite[IV]{MR1623206}, and  it can be
applied to ord-transitive models as well. An example for such an application is
the paper~\cite{BCDBC} by Goldstern, Shelah, Wohofsky and the author, which
proves the consistency of the Borel Conjecture plus the dual Borel Conjecture.
For this construction, nep forcing is not required, just ord-transitive models.
We very briefly comment on this in Section~\ref{ss:oracle}.

\subsection*{Contents}
 
\begin{list}{}{\setlength{\leftmargin}{0.5cm}\addtolength{\leftmargin}{\labelwidth}}
  \item[Section~\ref{sec:nontransitive}, p. \pageref{sec:nontransitive}:]
    We define {\bf ord-transitive} $\epsilon$-models $M$ 
    and their forcing extensions $M[G]$.
  \item[Section~\ref{sec:nep}, p. \pageref{sec:nep}:]
    We define the notion of {\bf non elementary proper forcing}:
    $Q$ is nep, if it is nicely definable and there are generic 
    conditions for all countable models. If $Q\subseteq 2^\omega$,
    then it is enough to consider transitive models; otherwise
    models such as in Section~\ref{sec:nontransitive} are used.
  \item[Section~\ref{sec:examples}, p. \pageref{sec:examples}:]
    We mention some {\bf examples}. Rule of thumb: every nicely definable 
    forcing that can be shown to be proper is actually nep.
    We also give a very partial counterexample to this rule of thumb.
  \item[Section~\ref{sec:csi}, p. \pageref{sec:csi}:]
    We define (a simplified version of) the
    {\bf countable support iteration}
    of nep forcings (such that the limit is again nep).
\end{list}

Most of the notion and results in this paper are due to Shelah, and (most
likely) can be found in~\cite{MR2115943}, some of them explicitly (and
sometimes in a more general setting), some at least ``in spirit''. However, the
notation and many technical details are different: In many cases the notation
here is radically simplified, in other cases the notions are just incomparable
(for example the definition of nep-parameter). Most importantly, we work in
standard set theory, not in a set theory with ordinals as urelements. The
result of Subsection~\ref{ss:zap} is due to Zapletal.

\section{Forcing with ord-transitive models}\label{sec:nontransitive}

Whenever we use the notation $N\esm H(\chi)$, we imply that $N$ is countable,
and that $\chi$ is a sufficiently large regular cardinal.  We write $H(\chi)$
for the sets that are hereditarily smaller than $\chi$ and $R_\alpha$ for the
sets of rank less than $\alpha$. (We will use the notation $V_\alpha$ for
forcings extension of $P_\alpha$, the $\alpha$-th stage of some forcing
iteration.)

\subsection{Ord-transitive models}

Let $M$ be a countable set such that $(M,\in)$ satisfies  $\ZFCx$, a subset of
ZFC.\footnote{We assume that $\ZFCx$ contains a sufficient part of ZFC, in
particular extensionality, pairing, product, set-difference, emptyset, infinity
and the existence of $\om1$.}
We do not require $M$ to be transitive or elementary.
$\ON$ denotes the class of ordinals.
We use 
$\ON^M$ to denote the set of $x\in M$ such that $M$ thinks that
$x\in\ON$; similarly for other definable classes.
This notation can formally be inconsistent with the following 
notation (but as usual we
assume that the reader knows which variant is used):
\footnote{If $M$ is not transitive, then for example
the set $x=\{\alpha\in M:\, M\vDash \alpha\in\om1\}$
will generally be different from the element $y\in M$ such that $M\vDash
y=\in\om1$. In that case $x\notin M$.}
For a definable set such as $\om1$, we use $\om1^M$ to denote the element $x$
of $M$ such that $M$ thinks that $x$ satisfies the according definition.

\begin{Def}\label{def:fintransetc}
  \begin{itemize}
    \item $M$ is ord-absolute, if $\omega^M=\omega$, $\omega\subseteq M$, and
        $\ON^M\subseteq \ON$ (and therefore $\ON^M=M\cap \ON$).
    \item $M$ is ord-transitive, if it is ord-absolute 
        and $x\in M\setminus \ON$ implies $x\subset M$.
  \end{itemize}
\end{Def}


An elementary submodel $N\esm H(\chi)$ is not ord-transitive. The simplest
example of an ord-transitive model that is not
transitive is the ord-collapse of an elementary submodel:

\begin{Def}\label{def:ordcol}
  Define $\ordcol^M: M\to V$ as the transitive collapse of
    $M$ fixing the ordinals:
    \[ 
       \ordcol^M(x)= 
       \begin{cases}
        x                             & \text{if } x\in\ON \\
        \{\ordcol^M(t):t\in x\cap M\} & \text{otherwise.} 
       \end{cases}
    \]
      $\ordcol(M)\DEFEQ \{\ordcol^M(x):\, x\in M\}.$
\end{Def}

By induction one can easily show:
\begin{Fact}\label{lem:ordcol}
  Assume that $M$ is ord-absolute and 
  set $i\DEFEQ \ordcol^M$, $M'\DEFEQ\ordcol(M)$. Then
  \begin{itemize}
    \item $i:M\to M'$ is an $\in$-isomorphism.
    \item $i(x)\in\ON\IFF x\in\ON$. In particular, $M\cap \ON=M'\cap \ON$.
    \item $M'$ is ord-transitive.
    \item $i$ is the identity iff $M$ is ord-transitive.
    \item The ord-collapse ``commutes'' with the transitive collapse,
      i.e., the transitive collapse of the ord-collapse of $M$ is 
      the same as the transitive collapse of $M$.
  \end{itemize}
\end{Fact}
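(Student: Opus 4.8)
The plan is to do everything by $\in$-recursion and $\in$-induction along $M$. This is legitimate: the genuine membership relation $\in\restriction M$ is well-founded (by Foundation in $V$) and set-like, which is exactly what makes the recursive clause defining $\ordcol^M$ well-defined and $\in$-induction available. Before starting I would record two standing facts. First, $(M,\in)$ is \emph{extensional}: the structure interprets $\in$ by genuine membership, so $M\vDash x\in y$ iff $x\in y$ for $x,y\in M$; hence if $x\neq y$ then $M\vDash x\neq y$ (as $M\vDash$ extensionality), and a witness is some $z\in M$ lying in exactly one of $x,y$. Equivalently, $x\cap M\neq y\cap M$ whenever $x\neq y$ in $M$. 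Second, I would use ord-absoluteness in the form $\ON^M=M\cap\ON$, i.e.\ for $x\in M$ we have $x\in\ON$ iff $M\vDash x\in\ON$.

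The core is the clause $i(x)\in\ON\IFF x\in\ON$, and I would prove it simultaneously with injectivity of $i$ by a single $\in$-induction on $x\in M$. The direction $x\in\ON\Rightarrow i(x)=x\in\ON$ is immediate from the definition. The reverse direction is the one real obstacle, since a priori the ord-collapse could turn a non-ordinal into an ordinal. So suppose $x\in M\setminus\ON$ but $i(x)\in\ON$. Every element of the ordinal $i(x)=\{i(t):t\in x\cap M\}$ is an ordinal, so each $i(t)\in\ON$, whence $t\in\ON$ by the induction hypothesis and $i(t)=t$; thus $i(x)=x\cap M$ is literally an ordinal $\gamma$, and in particular $x\cap M$ is downward closed in $\ON$. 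From this I would check directly that $M$ nevertheless thinks $x$ \emph{is} an ordinal: both transitivity of $x$ and ``every element of $x$ is an ordinal'' can be verified inside $M$ using only elements of $x\cap M=\gamma$, which are genuine ordinals and downward closed. Hence $M\vDash x\in\ON$, so $x\in\ON^M=M\cap\ON$, contradicting $x\notin\ON$. Injectivity folds into the same induction: if $i(x)=i(y)$, the ordinal/non-ordinal split (using the clause just proved) forces $x,y$ to be both ordinals, whence $x=i(x)=i(y)=y$, or both non-ordinals, in which case any $t\in x\cap M$ yields $i(t)=i(s)$ for some $s\in y\cap M$, so $t=s$ by the induction hypothesis; this gives $x\cap M=y\cap M$, hence $x=y$ by extensionality.

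With these two facts the remaining bullets are routine Mostowski-collapse bookkeeping. For the $\in$-isomorphism I still owe $\in$-preservation, $y\in x\IFF i(y)\in i(x)$, which I would verify by the same case split: the mixed ordinal/non-ordinal cases are handled precisely by $i(t)\in\ON\IFF t\in\ON$, and the both-non-ordinal case by injectivity; surjectivity onto $M'=\ordcol(M)$ holds by definition. That $i$ fixes ordinals gives $\ON^{M'}=M'\cap\ON$ and $\omega\subseteq M'$, so $M'$ is ord-absolute, and ord-transitivity is immediate since for $i(y)\in M'\setminus\ON$ the defining clause exhibits $i(y)=\{i(t):t\in y\cap M\}\subseteq M'$. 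For the fourth bullet, if $M$ is ord-transitive then $x\cap M=x$ for non-ordinal $x$, and an easy induction shows $i=\mathrm{id}$; conversely $i=\mathrm{id}$ gives $M=M'$, which is ord-transitive by the third bullet. Finally, since $i\colon(M,\in)\to(M',\in)$ is an $\in$-isomorphism, the two structures have the same Mostowski collapse, which is exactly the assertion that the ord-collapse commutes with the transitive collapse. The only genuinely delicate point is the reverse direction of $i(x)\in\ON\Rightarrow x\in\ON$; everything else is standard once the ordinals are left fixed.
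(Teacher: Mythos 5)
Your proof is correct and follows exactly the route the paper intends: the paper offers no details beyond ``By induction one can easily show,'' and your $\in$-induction is precisely that induction, carried out carefully. You also correctly isolate the only nontrivial point --- the direction $i(x)\in\ON\Rightarrow x\in\ON$ --- and handle it the right way, by verifying $M\vDash\qb x\in\ON\qe$ from $x\cap M=\gamma$ and then invoking ord-absoluteness ($\ON^M\subseteq\ON$) to get a contradiction, with the remaining bullets reducing to standard Mostowski-collapse bookkeeping as you say.
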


So if $N\esm H(\chi)$ and $H(\chi)\vDash \ZFCx$, then $M=\ordcol(N)$ is an
ord-transitive model.
This example demonstrates
that several simple formulas (that are absolute for transitive models), such as
``$x\subset z$'', ``$x\cup y=z$'' and ``$x\cap y=z$'', are {\em not} absolute
for the ord-transitive models.\footnote{%
  ``$\varphi(\bar x)$ is absolute''  means 
  $M\vDash \varphi(\bar m)$
  iff $V\vDash\varphi(\bar m)$ for all $\bar m$ from $M$.
  Let $i$ be the ord-collapse from an elementary submodel $N$ to $M$.
  Set $x=\om1$, $y=\{\{0\}\}$ and $z=x\cup y$. Then $x\in \ON$
  and $z\notin \ON$, so $i(x)=x$ and $i(z)$ is countable.
  Therefore $i(x)\cup i(y)\neq i(z)$, and $i(x)\nsubseteq
  i(z)$. Also, $i(z)\cap i(x)\neq i(x)$.%
}
However, a few simple properties are absolute: In particular, if a formula
$\varphi(r)$ about real numbers is absolute for all transitive models, then is
absolute for all ord-transitive models as well (which can easily be seen using
the transitive collaps, cf.~the following Fact~\ref{fact:labeled}). We 
now mention 
some of these absolute properties for ord-transitive models $M$:
  \begin{itemize}
    \item $x\in \omega^\omega$ is absolute; every $\mathbf\Sigma^1_1$ formula
      is absolute; 
    \item ``Finite sets'' are absolute: 
        $z=\{x,y\}$ is absolute, if $x\in M$ and $x$ is finite,
        then $x\subset M$ and $M\vDash\qb x\text{ is finite}\qe$.
        $H^M(\al0) = H(\al0)$.
    \item If $M\vDash f:A\to B$, then $f:{A\cap M}\to{B\cap M}$.
       If additionally $M$ thinks that $f$ is injective (or surjective), then
       $f$ is injective (or surjective with respect to the
       new image).
    \item $x\in R_\alpha $ is upwards absolute. If additionally
      $x\notin \ON$, then $\card{x}\leq \card{\alpha}$ is upwards
      absolute.
    \item If either $x \in\ON$, or
        $x\cap \ON=\emptyset$, then $y\subset x$ is absolute.
    \end{itemize}

Instead of ord-transitive models, we could equivalently use transitive models
with an (ordinal) labeling on the ordinals:

\begin{Def}\label{def:labeledmodel}
  A labeled model is a pair $(M,f)$ consisting 
  of a transitive, countable $\ZFCx$ model $M$
  and a strictly monotonic function $f:(M\cap\ON)\to\ON$ satisfying
  $f(\alpha)=\alpha$ for $\alpha\leq \omega$.
  \\
  Given a labeled model $(M,f)$, define a map $i:M\to V$ by
  \[i(x)=\begin{cases}
    f(x)&\text{if }x\in\ON\\
    \{i(y):\, y\in x\} &\text{otherwise.}
  \end{cases}\]
  Set $\uncoll(M,f):=i[M]$.
  \\
  Given an ord-transitive model $M$, let $j:M\to M'$ be the transitive
  collapse (an $\in$-isomorphism) and
  let $f:M'\cap\ON\to\ON$ be the inverse of
  $j$. Define $\labeledcoll(M):=(M',f)$.
\end{Def}

By induction, one can prove the following:
\begin{Fact}\label{fact:labeled}
  If $M$ is an ord-transitive model, then $\labeledcoll(M)$ is a labeled
  model and $\uncoll(\labeledcoll(M))=M$.
  If $(M,f)$ is a labeled model, then $\uncoll(M,f)$ is an ord-transitive
  model and $\labeledcoll(\uncoll(M,f))=(M,f)$.
\end{Fact}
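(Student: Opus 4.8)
The plan is to prove the two \qb round-trip\qe\ identities by $\in$-induction (equivalently, induction on rank, since the models in question are transitive or ord-absolute), the point being to identify the map $i$ of Definition~\ref{def:labeledmodel} with an inverse transitive collapse. Both halves then close using the uniqueness of the Mostowski collapse: an $\in$-isomorphism of a well-founded extensional structure onto a transitive set \emph{is} its transitive collapse.

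For the first statement, let $M$ be ord-transitive, let $j\colon M\to M'$ be the transitive collapse and $f=(j\restriction\ON)^{-1}$, so that $\labeledcoll(M)=(M',f)$. That $(M',f)$ is a labeled model is routine: $M'$ is transitive, countable, and (being $\in$-isomorphic to $M$) a model of $\ZFCx$; since $M$ is ord-absolute we have $\ON^M=M\cap\ON$, and the collapse maps this onto $M'\cap\ON=\ON^{M'}$, so $f$ is well defined and strictly monotonic (on ordinals $\in$ is $<$, and $j$ is order preserving and reflecting); finally $\omega\subseteq M$ forces $j(\alpha)=\alpha$ for $\alpha\le\omega$, hence $f(\alpha)=\alpha$ there. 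It then remains to see $\uncoll(M',f)=M$. Writing $i$ for the map attached to $(M',f)$, I would prove $i=j^{-1}$ by $\in$-induction on $M'$: on ordinals $i(x)=f(x)=j^{-1}(x)$ by definition, and for $x\in M'\setminus\ON$ one has $i(x)=\{i(y):y\in x\}=\{j^{-1}(y):y\in x\}$ by the inductive hypothesis, which equals $j^{-1}(x)$ because $j^{-1}(x)\in M\setminus\ON$ is a subset of $M$ (here the ord-transitivity of $M$ is used) and $j$ is an $\in$-isomorphism. Thus $\uncoll(M',f)=i[M']=j^{-1}[M']=M$.

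For the second statement, let $(M,f)$ be a labeled model, $i$ the associated map, and $M^\ast\DEFEQ\uncoll(M,f)=i[M]$. The crux is the claim that $i$ \emph{preserves and reflects being an ordinal}: $x\in\ON\IFF i(x)\in\ON$ for $x\in M$. The forward direction is immediate ($i(x)=f(x)$). For the converse I would argue by induction: if $x\notin\ON$ but $i(x)=\beta\in\ON$, then each $i(y)$ with $y\in x$ is an ordinal, so $y\in\ON$ by induction, giving $x\subseteq\ON$ and $f[x]=\beta$; as $x$ is not an ordinal it is not transitive, so pick $\gamma\in x$ and $\delta<\gamma$ with $\delta\notin x$, whereupon $f(\delta)<f(\gamma)<\beta=f[x]$ produces some $\gamma'\in x$ with $f(\gamma')=f(\delta)$, and injectivity of $f$ forces $\delta=\gamma'\in x$, a contradiction. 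With this in hand, injectivity of $i$ and the $\in$-isomorphism property $x\in y\IFF i(x)\in i(y)$ follow by straightforward inductions (the mixed ordinal/non-ordinal cases being exactly where the reflection claim is needed). One then checks $M^\ast$ is ord-transitive: ord-absoluteness holds since $i(\omega)=f(\omega)=\omega$, $\omega\subseteq M^\ast$, and $\ON^{M^\ast}=i[\ON^M]=f[M\cap\ON]\subseteq\ON$; and if $x\in M^\ast\setminus\ON$, then its $i$-preimage $w$ is a non-ordinal, so $x=i(w)=\{i(z):z\in w\}\subseteq M^\ast$. Finally, $i\colon M\to M^\ast$ is an $\in$-isomorphism onto the transitive set $M$, so by uniqueness of the transitive collapse $i^{-1}$ is the collapse of $M^\ast$, whence $(M^\ast)'=M$ and the associated label is $(i^{-1})^{-1}\restriction\ON=i\restriction\ON=f$; hence $\labeledcoll(\uncoll(M,f))=(M,f)$.

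The main obstacle is the reflection claim that $i$ sends non-ordinals to non-ordinals: unlike its easy converse, it genuinely uses both the strict monotonicity and the injectivity of $f$ together with the fact that a set of ordinals failing to be an ordinal cannot be transitive. Everything else is either a bookkeeping induction or an appeal to the uniqueness of the Mostowski collapse, and the two round-trip equalities drop out once $i$ is recognized as the relevant (inverse) collapse.
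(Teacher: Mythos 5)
Your proof is correct and takes the intended route: the paper states Fact~\ref{fact:labeled} with only the remark \qb by induction, one can prove the following\qe, and your $\in$-inductions identifying $i$ with the inverse Mostowski collapse are precisely the details being elided. In particular, you correctly isolate and prove the one genuinely non-trivial step --- that $i$ reflects ordinalhood, via strict monotonicity of $f$ together with the fact that a set of ordinals that is not an ordinal fails to be transitive --- which is exactly where a careless induction would break down.
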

We say that the ord-transitive model $M$ and the labeled model $(M',f')$
correspond to each other, if $M=\uncoll(M',f')$ or equivalently
$(M',f')=\labeledcoll(M)$. So each ord-transitive model corresponds to exactly one
labeled model and vice versa.

This also shows that is easy to create ``weird'' ord-transitive models; in
particular ``$\alpha$ is successor ordinal'' and similarly simple formulas are
generally not absolute for ord-transitive models.
We will generally not be interested in such weird models:

\begin{Def} Let $M$ be ord-transitive.
  \begin{itemize}
    \item $M$ is ``successor-absolute'', if ``$\alpha$ is
      successor'' and ``$\alpha=\beta+1$'' both are absolute between $M$ and $V$.
    \item A successor-absolute $M$ is $\cf\omega$-absolute, 
      if ``$\cf(\alpha)=\omega$''
      and  ``$A$ is a countable cofinal subset of $\alpha$'' both are
      absolute between $M$ and $V$.
  \end{itemize}
\end{Def}

\begin{Fact}
If $M$ is $\cf\omega$-absolute and $M$ thinks that $x$ is countable, then
$x\subset M$.
\end{Fact}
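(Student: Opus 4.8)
The plan is to reduce immediately to the case where $x$ is an ordinal, and then to argue by transfinite induction on that ordinal, splitting into the successor and limit cases and invoking successor-absoluteness and $\cf\omega$-absoluteness respectively. If $x\notin\ON$, then $x\in M\setminus\ON$, so ord-transitivity of $M$ already yields $x\subset M$, with no appeal to countability needed. Hence it suffices to treat $x=\alpha\in\ON^M$, and I would prove, by transfinite induction on the genuine ordinal $\alpha$, the statement: if $\alpha\in M\cap\ON$ and $M\vDash\qb\alpha\text{ is countable}\qe$, then $\alpha\subset M$. The case $\alpha=0$ is trivial since $0^M=0$.

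For the successor case, suppose $M\vDash\qb\alpha\text{ is a successor}\qe$; by successor-absoluteness this holds in $V$ as well. Choose $\beta\in M$ with $M\vDash\alpha=\beta+1$; successor-absoluteness gives $\alpha=\beta+1$ in $V$, so $\beta$ is the genuine predecessor and $\beta\in M$. Since (in $\ZFCx$) a subset of a countable set is countable, $M\vDash\qb\beta\text{ is countable}\qe$, so by the induction hypothesis $\beta\subset M$. As $\alpha=\beta\cup\{\beta\}$ with $\beta\in M$ and $\beta\subset M$, we conclude $\alpha\subset M$.

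For the limit case, successor-absoluteness shows $\alpha$ is genuinely a limit ordinal. Inside $M$, $\alpha$ is a countable limit ordinal, so $M\vDash\cf(\alpha)=\omega$, and $M$ contains a set $A$ with $M\vDash\qb A\text{ is a countable cofinal subset of }\alpha\qe$. By $\cf\omega$-absoluteness this transfers to $V$, so $A$ is genuinely a countable cofinal subset of $\alpha$. Since $A\in M$ is not an ordinal, ord-transitivity gives $A\subset M$; hence every $\gamma\in A$ lies in $M\cap\ON$, satisfies $\gamma<\alpha$, and satisfies $M\vDash\qb\gamma\text{ is countable}\qe$, so by the induction hypothesis $\gamma\subset M$. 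Finally, for any $\beta<\alpha$, cofinality of $A$ in the limit ordinal $\alpha$ furnishes some $\gamma\in A$ with $\beta<\gamma$, whence $\beta\in\gamma\subset M$. Thus $\alpha\subset M$, completing the induction.

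The main obstacle is exactly the limit case. Because the ordinals of $M$ need not form an initial segment of $\ON$, ord-transitivity gives no direct control over ordinals, and the naive idea of pushing forward a surjection $\omega\to\alpha$ from $M$ only recovers $\alpha\cap M$ rather than all of $\alpha$. The point of $\cf\omega$-absoluteness is precisely to guarantee that a cofinal sequence witnessed inside $M$ stays cofinal in $V$; combined with ord-transitivity (which places the members of that sequence, as elements of a non-ordinal set, into $M$), this closes the gap and drives the induction through. The only bookkeeping to verify is that $\ZFCx$ internally proves \qb a subset of a countable set is countable\qe\ and \qb every countable limit ordinal has cofinality $\omega$\qe, both of which are applied inside $M$.
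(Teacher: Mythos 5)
Your argument is essentially the paper's argument -- the engine in both cases is the same: transfer an $M$-countable cofinal subset of $\alpha$ to $V$ via $\cf\omega$-absoluteness, use ord-transitivity of non-ordinal elements to see that this subset lies inside $M$, and close the induction. But there is one step you assert without justification, and it is exactly the point the paper takes care of: you say \qb Since $A\in M$ is not an ordinal, ord-transitivity gives $A\subset M$\qe. Nothing in \qb$A$ is a countable cofinal subset of $\alpha$\qe\ rules out $A\in\ON$; indeed the one ordinal that is a cofinal subset of a limit ordinal $\alpha$ is $\alpha$ itself, and $M$ is perfectly entitled to hand you $A=\alpha$, in which case ord-transitivity gives nothing and your argument is circular ($\alpha\subset M$ is what you are trying to prove). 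The fix is trivial -- replace $A$ by $A\setminus\{0\}$ (computed in $M$; it is still a countable cofinal subset of $\alpha$ in $M$, and a nonempty set of ordinals omitting $0$ cannot be an ordinal) -- but as written the limit case has a hole.

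Modulo that repair, it is worth noting how the paper organizes the same idea more economically: instead of splitting into successor and limit cases, it takes a minimal counterexample $x\in\ON$ and sets $y:=x\setminus\{0\}$ inside $M$. This single choice does all the work at once: $y\notin\ON$ automatically (since $0\notin y$), $M$ trivially believes $y$ is countable and cofinal in $x$ whether $x$ is a successor or a limit, and $\cf\omega$-absoluteness then makes $y$ genuinely cofinal in $V$, so every $\beta<x$ lies in some $\gamma\in y\subset M$ or is itself in $y$, and minimality finishes. In particular the paper never invokes successor-absoluteness (though it is available, since $\cf\omega$-absolute models are by definition successor-absolute), whereas your proof needs it for the successor case. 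Your case split is perfectly sound -- successor-absoluteness is part of the hypothesis -- it just buys nothing here; the $x\setminus\{0\}$ trick both uniformizes the cases and supplies, for free, precisely the non-ordinality guarantee your write-up is missing.
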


\begin{proof}
  If $x\notin \ON$, then $x\subseteq M$.  So assume towards a contradiction
  that $x\in\ON$ is minimal with $x\nsubset M$ (and $x<\om1^M$).  $M$ thinks
  that $y:=x\setminus \{0\}$ (constructed in $M$) is countable and cofinal in
  $x$.  Since $y\notin \ON$ we know $y\subset M$, so $x=\bigcup_{\alpha\in
  y}\alpha$ is a subset of $M$, since $x$ was the minimal counterexample.)
\end{proof}

$M$ is successor-absolute iff the corresponding labeled model $(M',f')$
satisfies: $f(\alpha+1)=f(\alpha)+1$ and $f(\delta)$ is a limit ordinal for all
limit ordinals $\delta$.

\begin{Rem}
\begin{itemize}
\item
We will see in the  next section how to construct forcing extensions for
ord-transitive models $M$, or equivalently labeled models $(M',f')$: 
If $G$ is $M$-generic, and $G'$ the image under the transitive collapse
(which will be $M'$-generic), then the forcing extension $M[G]$ is just
the ord-transitive model corresponding to $(M'[G'],f')$.
Such forcing extensions are the most important ``source'' for ord-transitive
models that are not just (the ord-transitive collapse of) an elementary model. 
\item
In applications, we typically have to deal with ord-transitive models that are
internal forcing extensions of elementary models (i.e., in the construction
above $G$ is in $V$ and $M$ is the ord-collapse of $N\esm H(\chi)$).
\item
All such ord-transitive models are ord-absolute (and satisfy many additional
absoluteness properties).  So for applications, it is enough to only consider
ord-absolute models, and restrictions of this kind sometimes make notation
easier.
\item
Ord-collapses $M$ of elementary submodels are $\cf\omega$-absolute.  The same
holds for forcing extensions $M[G]$ by proper forcing notions. However, general
internal forcing extensions $M[G]$ will not be $\cf\omega$-absolute: If $G$ is
generic for a collapse forcing, then $M[G]$ will think that $\om1^V$ is
countable.  In some applications (such as the the preservation theorem
mentioned in the introduction) it is essential to use such collapses, therefore
we cannot restrict ourselves to $\cf\omega$-absolute models.  However, for some
other applications, $\cf\omega$-absolute models are sufficient (e.g., for the
application mentioned in Section~\ref{ss:oracle}).
\end{itemize}
\end{Rem}

Every ord-transitive model is hereditarily countable modulo ordinals:
\begin{Def}\label{def:hco}
  \begin{itemize}
    \item We define $\ordclos$ by induction:
       $\ordclos(x)=x\cup\bigcup\{\ordclos(t):\, t\in x\setminus \ON\}$.
    \item $\hco(\alpha)=\{x\in R_\alpha:\, \card{\ordclos(x)}\leq \al0\}$
    \item $\hco=\bigcup_{\alpha\in\ON}\hco(\alpha)$.
\end{itemize}
\end{Def}

For example, if $\alpha>\om1$, then $\om1$ is element of $\hco(\alpha)$, but
$\om1\cup \{\{\emptyset\}\}$ or $\om1\setminus \{\emptyset\}$ are not.

\begin{Facts}\label{lem:hc}
  \begin{itemize}
    \item $\ordclos(M)$ is the smallest ord-transitive superset of $M$.
    \item
      A ZFC$^*$-model $M$ is ord-transitive iff $\ordclos(M)=M$.
    \item If $M$ is ord-transitive and countable, then
      $M\in\hco$.
    \item If $M$ is ord-transitive and $x\in M$, then 
      $\ordclos(x)=\ordclos^M(x)\subseteq M$.
    \item ``$x\in\hco(\alpha)$'' is upwards absolute for ord-transitive
      models.
  \end{itemize}
\end{Facts}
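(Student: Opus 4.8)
The plan is to prove all five items by $\in$-induction, with the technical engine being the fourth item: for ord-transitive $M$ and $x\in M$ one has $\ordclos^M(x)=\ordclos(x)\subseteq M$. Everything else follows from this together with the absoluteness facts already collected for ord-transitive models. Throughout, the crucial point is that the recursion defining $\ordclos$ descends only into the \emph{non-ordinal} elements of a set, and that ``$t\in\ON$'' is absolute for an ord-absolute $M$ (since $\ON^M=M\cap\ON$); ordinals are thus treated as atoms and are never decomposed.

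First I would dispose of the fourth item, by $\in$-induction on $x\in M$. If $x\in\ON$ the two sides agree and are contained in $M$ because $x\in M$ and ``$\cdot\in\ON$'' is absolute. If $x\notin\ON$, ord-transitivity gives $x\subseteq M$, so every $t\in x$ lies in $M$ and, by absoluteness of ``$t\in\ON$'', the set $x\setminus\ON$ is computed the same in $M$ and in $V$. By the induction hypothesis $\ordclos^M(t)=\ordclos(t)\subseteq M$ for each non-ordinal $t\in x$. Since real $\in$ is absolute, I can then compare $\ordclos^M(x)$ and $\ordclos(x)$ element by element and conclude they coincide; containment $\ordclos(x)\subseteq M$ follows as $x\subseteq M$ and each $\ordclos(t)\subseteq M$. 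It would be wrong to argue via absoluteness of the union operation $x\cup y=z$ (which fails for ord-transitive models, as the footnote shows); the argument must use only that membership itself is absolute.

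For the first item I would verify separately that $\ordclos(M)\supseteq M$, that it is ord-transitive, and that it is least. Containment is immediate from the recursion; ord-transitivity amounts to showing that $z\in\ordclos(M)\setminus\ON$ implies $z\subseteq\ordclos(M)$, which follows from the hereditary nature of the operation ($z\subseteq\ordclos(z)\subseteq\ordclos(M)$) by induction on rank. Minimality is the observation that any ord-transitive $N\supseteq M$ is already closed under the step ``adjoin the elements of a non-ordinal member'', so an easy induction gives $\ordclos(M)\subseteq N$. The second item is then a one-line corollary: $\ordclos(M)$ is the least ord-transitive superset of $M$, so it equals $M$ exactly when $M$ is itself ord-transitive.

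The third and fifth items drop out of the fourth. For the third, $M$ ord-transitive gives $\ordclos(M)=M$ by item two, so $\card{\ordclos(M)}=\card{M}\le\al0$ and $M\in\hco(\rk(M)+1)$. For the fifth, suppose $M\vDash x\in\hco(\alpha)$; by item four the object $M$ calls $\ordclos(x)$ is the real $\ordclos(x)$, a surjection $\omega\to\ordclos^M(x)$ witnessing countability in $M$ remains a genuine surjection in $V$ (absoluteness of functions and of surjectivity, already noted), and ``$x\in R_\alpha$'' is upwards absolute; hence $x\in\hco(\alpha)$ in $V$. I expect the only real obstacle to be item four: the need to route around the non-absoluteness of the basic set operations by reasoning directly with membership, and to keep ordinals consistently treated as atoms, so that e.g.\ $\om1\in\hco$ despite being uncountable as a set.
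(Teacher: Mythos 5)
Your proof is correct and is exactly the routine $\in$-induction that the paper leaves implicit (these Facts are stated without proof): proving the fourth item first by $\in$-induction, using only absoluteness of membership and of ``$t\in\ON$'' (rather than the non-absolute union operation), and then deriving items one, two, three and five as corollaries is the intended argument. You are also right to treat ordinals as atoms in the recursion for $\ordclos$ --- the literal clause of Definition~\ref{def:hco} would give $\ordclos(\alpha)=\alpha$ for $\alpha\in\ON$, which would break both the containment $\ordclos(x)\subseteq M$ in item four and the paper's own example $\om1\in\hco(\alpha)$, so your reading is the one the paper's usage forces.
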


As already mentioned, there is an ord-transitive model $M$ such that $\om1^V$
is countable in $M$.  So $M$ thinks that $\om1^V$ is not just element of $\hco$
(which is true in $V$ as well), but that it can also be constructed as
countable set (which is false in $V$).


\subsection{Forcing extensions}

Forcing still works for ord-transitive models (but the evaluation of names has
to be modified in the natural way). In the following, $M$ always denotes an
ord-transitive model. 

\begin{Def}\label{def:MQgen}
  Let $M$ think that $\leq$ is a partial order on $P$. So in $V$, $\leq$ is a
  partial order on $P\cap M$. Then $G$ is called $P$-generic over $M$ (or just
  $M$-generic, or $P$-generic), if $G \cap P\cap M$ is a filter on $P\cap M$ and meets
  every dense subset $D\in M$ of $P$.\footnote{I.e.: If $p,q\in G \cap P\cap M$,
    then there is a $r\leq p,q$ in $G \cap P\cap M$;
    and if $D\in M$ and $M$ thinks that $D$ is a dense subset of
    $P$ (or equivalently: $D\cap M$ is a dense subset of $P\cap M$)
    then $G\cap D\cap M$ is nonempty.}
\end{Def}
To simplify notation, we will use the following assumption:
\begin{Asm}\label{asm:notation1}
  $P\cap \ON$ is empty.
  (Then in particular $P\subseteq M$, and we can write $P$ instead of $P\cap M$.
  Also, if $D\subset P$ is in $M$, then $D\subset M$.)
\end{Asm}

In Definition~\ref{def:MQgen} we do not assume $G\subseteq P$. This slightly
simplifies notation later on.  Obviously $G$ is $M$-generic iff $G \cap P$ is
$M$-generic.  One could equivalently use maximal antichains, predense sets, or
open dense sets instead of dense sets in the definition (and one can omit the
``filter'' part if one requires that a maximal antichain $A$ in $M$ meets the
filter $G$ in exactly one point).

Let $\labeledcoll(M)=(M',f')$ be the labeled model corresponding to $M$, via
the transitive collapse $j$. Let $G\subseteq P$ and set $P':=j(P)$ and
$G':=j[G]$. Since the transitive collapse is an isomorphism, $G'$ is
$P'$-generic over $M'$ iff $G$ is $P$-generic over $M$. In that case we can
form the forcing extension $M'[G']$ in the usual way, and define
$M[G]=\uncoll(M'[G'],f')$ as the ord-transitive model corresponding to
$(M'[G'],f')$. Let $J: M[G]\to M'[G']$ be the transitive collapse, and $I$ its
inverse, then we can define $\n\tau[G]^M$ as $I(J(\n\tau)[G'])$ for a $P$-name
$\n\tau$ in $M$. Elementarity shows that this is a ``reasonable'' forcing
extension.

We now describe this extension in more detail and using the ord-transitive
model $M$ more directly:

Basic forcing theory shows: If $M$ is a transitive model, $P\in M$, and $G$ a
$P$-generic filter over $M$, then we can define the evaluation of names by
\begin{equation}\label{eq:gurkx}
  \n\tau[G]=\{\n\sigma[G]:\, (\n\sigma,p)\in\n\tau,\, p\in G\},
\end{equation}
and 
$M[G]$ will be a (transitive) forcing extension of $M$. 

This evaluation of names works for elementary submodels as well, provided that
$G$ is not only $N$-, but also $V$-generic.  More exactly:
If $N\esm H(\chi)$ contains $P$, and if $G$ is $N$- {\em and} $V$-generic, then
$N[G]$ is a forcing extension of $N$ (and in particular end-extension).  Here
it is essential that $G$ is $V$-generic as well: If $N\esm H(\chi)$ and $G\in
V$ is $N$-generic (for any nontrivial forcing $P$), then $N[G]$ is not an
end-extension of $N$, since $G\in \pow(P)\in N$, but  $G\notin N$.

This can be summarized as follows:
\begin{Fact}\label{thm:transforcing}
  Assume that either $M$ is transitive and $G$ is $M$-generic, or that 
  $M\esm H(\chi)$ and $G$ is $M$- and $V$-generic. Then
  \begin{itemize}
    \item $M[G]\supset M$ is an end-extension\footnote{%
Any usual concept of forcing extension (with regard to pairs of
$\in$-models) will require that $M[G]$ is an end-extension of $M$: If $\ntau$
is forced to be in some $x$ with $x\in V$, 
then the value of $\ntau$ can be decided by a
dense set.  Similarly, we get: $M$ is $M[G]$ intersected with the transitive
closure of $M$.}
      (i.e., if $y\in M[G]$ and $y\in x\in M$, then $y\in M$),
      and $\ON^{M[G]}=\ON^M$.
    \item $M[G]\vDash \varphi(\ntau[G])$ iff $M\vDash p\forc \varphi(\ntau)$
      for some $p\in G$.
  \end{itemize}
  In the transitive case $M[G]$ is transitive; and
  in the elementary submodel case, we get:
  \begin{itemize}
    \item $(M[G],\epsilon, M)\esm (H^{V[G]}(\chi),\epsilon,H^V(\chi))$.
    \item Forcing extension commutes with transitive collapse: 
      Let $i$ be the transitive collapse of $M$, and $I$ of
      $M[G]$. Then $I$ extends $i$,
      $i[G]$ is $i[M]$ -generic and 
      $i(\ntau)[i[G]]=I(\ntau[G])$.
  \end{itemize}
\end{Fact}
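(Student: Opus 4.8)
The plan is to treat the two cases separately, deriving the elementary-submodel case from the transitive case via the transitive collapse. In the transitive case everything is classical. With $M$ transitive and $G$ an $M$-generic filter, the definability and truth lemmas for forcing over $M$ give the second bullet (the forcing theorem), the recursive evaluation \eqref{eq:gurkx} produces a transitive set $M[G]$, and the usual argument—any $\ntau[G]\in x\in M$ has its value decided by a dense set in $M$, which $G$ meets—yields that $M[G]$ is an end-extension of $M$ with $\ON^{M[G]}=\ON^M$. I would simply cite these standard facts.

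For the elementary-submodel case I would first set up the transitive collapse $i\colon M\to i[M]=:\bar M$, which is an $\in$-isomorphism, and observe that $\bar M$ is a transitive $\ZFCx$-model to which the transitive case applies. Since $i$ is an isomorphism and $G$ is $M$-generic, $i[G]$ is $i(P)$-generic over $\bar M$, so $\bar M[i[G]]$ is defined and transitive. The key step is to show that the assignment $\ntau[G]\mapsto i(\ntau)[i[G]]$ is a well-defined $\in$-isomorphism $I\colon M[G]\to\bar M[i[G]]$; well-definedness and preservation of $\in$ and $=$ reduce, via the forcing theorem in $H(\chi)$ together with $M\esm H(\chi)$ and the fact that $i$ transports the forcing relation (so $M\vDash p\forc\psi(\ntau)$ iff $\bar M\vDash i(p)\forc\psi(i(\ntau))$), to the truth lemma on the transitive side. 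Since $\bar M[i[G]]$ is transitive, $I$ is then \emph{the} transitive collapse of $M[G]$; as $x=\std{x}[G]$ for $x\in M$ and $i(\std{x})=\std{i(x)}$, the map $I$ extends $i$, which establishes the last bullet. Transporting the transitive-case conclusions for $\bar M[i[G]]$ back through $I$ (which extends $i$ and fixes the ordinals of $M$) immediately yields the forcing theorem for $M$, the end-extension property, and $\ON^{M[G]}=\ON^M$.

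It remains to prove $(M[G],\in,M)\esm(H^{V[G]}(\chi),\in,H^V(\chi))$. For the $\in$-reduct I would use the Tarski--Vaught test, where it is essential that $H^{V[G]}(\chi)=H^V(\chi)[G]$ and that $H^V(\chi)$ is a $\ZFCx$-model on which the forcing theorem holds. Suppose $H^{V[G]}(\chi)\vDash\exists x\,\psi(x,\ntau_1[G],\dots,\ntau_n[G])$ with $\ntau_j\in M$. By the forcing theorem in $H^V(\chi)$ there is $p\in G$ forcing $\exists x\,\psi(x,\ntau_1,\dots,\ntau_n)$, and the set of conditions below $p$ deciding a witness, together with a map choosing a witnessing name, is definable in $H^V(\chi)$ from the parameters $p,\ntau_1,\dots,\ntau_n\in M$. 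By $M\esm H(\chi)$ these objects lie in $M$, and by $M$-genericity $G$ meets that dense set inside $M$, producing $q\in G\cap M$ and a name $\nsigma\in M$ with $M\vDash q\forc\psi(\nsigma,\ntau_1,\dots,\ntau_n)$; the truth lemma then gives the witness $\nsigma[G]\in M[G]$. To handle the unary predicate I would show $M[G]\cap H^V(\chi)=M$: the inclusion $\supseteq$ holds since $M\subseteq M[G]$ and $M\subseteq H(\chi)$, and $\subseteq$ is exactly the end-extension fact already obtained.

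The main obstacle is the witness-finding step in the Tarski--Vaught argument, since it is the only place where all three hypotheses are used at once and must be combined correctly: $V$-genericity of $G$ to get the forcing theorem in $H^V(\chi)$, elementarity $M\esm H(\chi)$ to pull the relevant dense set and the witnessing name-assignment down into $M$, and $M$-genericity to meet that dense set inside $M$. Everything else is either classical transitive forcing theory or bookkeeping with the isomorphism $I$.
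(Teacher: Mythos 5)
Your proposal is correct, and it is precisely the standard argument the paper implicitly relies on: the statement is labeled a \emph{Fact} and given without proof (``basic forcing theory shows\dots''), and your decomposition --- classical transitive case, transfer through the isomorphism $I$ induced by the transitive collapse, and Tarski--Vaught for $(M[G],\in,M)\esm(H^{V[G]}(\chi),\in,H^V(\chi))$ --- is the canonical route. Two small repairs to your write-up. First, in the witness-finding step you list $p$ among the parameters ``$\in M$'', but the $p\in G$ produced by the forcing theorem need not lie in $M$; either use the dense set of conditions that decide $\exists x\,\psi(x,\ntau_1,\dots,\ntau_n)$ and, upon deciding positively, force $\psi(\nsigma,\ntau_1,\dots,\ntau_n)$ for some name $\nsigma$ --- this set is definable from $\ntau_1,\dots,\ntau_n$ alone, hence in $M$ --- or first pick $p\in G\cap M$ deciding the existential (available by $M$-genericity; it must decide positively because $G$ is $V$-generic and the existential holds in $H^{V[G]}(\chi)$). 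Second, the parenthetical that $I$ ``fixes the ordinals of $M$'' is literally false: since $M$ is countable, the collapse $i$ moves every uncountable ordinal of $M$. Nothing in your transport argument needs it, though --- $\ON^{M[G]}=\ON^M$ follows from $I$ extending the bijection $i$ together with $\ON^{\bar{M}[i[G]]}=\ON^{\bar{M}}$ from the transitive case.
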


If one considers general ord-transitive candidates $M$ (i.e., $M$ is neither
transitive nor an elementary submodel), then Definition~\eqref{eq:gurkx} does not
work any more.
  For example, if $M$ is countable and thinks that
$\ntau$ is a standard name for the ordinal $\om1^V$, then $\ntau\subset M$ is
countable, so $\ntau[G]$ will always be countable and different
from $\om1^V$.
This leads to the following natural modification of~\eqref{eq:gurkx}:

\begin{Def}\label{def:evaluation}
  Let $G$ be $P$-generic over $M$, and let
  $M$ think that $\ntau$ is a $P$-name.
      \[
        \ntau[G]^M\DEFEQ 
        \begin{cases} 
          x, \text{ if } x\in M\,\&\, (\exists p\in G\cap P)\,
          M\vDash\qb p\forc \ntau=\std{x}\qe
          \\
          \{ \nsigma[G]^M:\, (\exists p\in G\cap P)\, (\nsigma, p)\in
          \ntau \cap M \}\text{ otherwise.}
        \end{cases}
      \]
      $M[G]\DEFEQ \{\ntau[G]^M:\, \ntau\in M,\ M\vDash\qb\ntau\text{ is a } P\text{-name}\qe\}$.
\end{Def}
(Note that being a $P$-name is absolute.)

We usually just write $\ntau[G]$ instead of $\ntau[G]^M$.  There should be no
confusion which notion of evaluation we mean, \ref{def:evaluation} or
\eqref{eq:gurkx}, which we can also write as $\ntau[G]^V$:
\begin{itemize}
  \item If $M$ is transitive, then $\ntau[G]^M=\ntau[G]^V$.
  \item If $M$ is elementary submodel (and $G$ is $M$- and $V$-generic),
    then we use $\ntau[G]^V$. ($\ntau[G]^M$ does not lead to a
    meaningful forcing extensions.)\footnote{%
If $M$ is not ord-transitive, e.g., $M\esm H(\chi)$, then $\ntau[G]^M$ does
not lead to a meaningful forcing extension: Let $P$ be the countable partial
functions from $\om1$ to $\om1$, and  let $G$ be $M$-generic ($G$ can
additionally be $V$-generic as well).  Let $\n\Gamma\in M$ be the canonical
name for the generic filter $G$. So $\n\Gamma[G]^M$ is countable.  Since $P$
is $\sigma$-closed, $\n\Gamma[G]^M\in \pow^V(P)\in M$, so $M[G]$ (using the
modified evaluation) is not an end-extension of $V$.%
}
  \item If $M$ is ord-transitive,
    then we use $\ntau[G]^M$.
\end{itemize}
\begin{Rem}
  The omission of $M$ in $\ntau[G]^M$ should not hide the fact that for
  ord-transitive models, $\ntau[G]^M$ trivially {\em does} depend on $M$:
  If for
  example $M_1\cap\beta=\alpha<\beta$ and $M_1$ thinks that $\ntau$ is a
  standard
  name for $\beta$, and if $M_2$ contains $P$, $\ntau$ and $\alpha$, then then
  $\ntau[G]^{M_1}=\beta\neq \ntau[G]^{M_2}$. 
\end{Rem}

$\ntau[G]$ is well-defined only if $G$ is $M$-generic, or at least a filter.
(If $G$ contains $p_0\incomp_P p_1$, then there is (in $M$) a name $\ntau$ and
$x_0\neq x_1$ such that $p_i$ forces $\ntau=x_i$ for $i\in\{0,1\}$.)

If $M$ is ord-transitive then the basic forcing theorem works as usual
(using the modified evaluation):
\begin{Thm}\label{thm:forcing}
  Assume that $M$ is ord-transitive and that $G$ is $M$-generic. Then
  \begin{itemize}
    \item $M[G]$ is ord-transitive.
    \item $M[G]\supset M$ is an end-extension.
      $\ON^{M[G]}=\ON^M$.
    \item $M[G]\vDash \varphi(\ntau[G])$ iff $M\vDash p\forc \varphi(\ntau)$
      for some $p\in G\cap P$.
  \end{itemize}
  Moreover, the transitive collapse commutes with the forcing extension:
  Let $(M',f')$ correspond to $M$, and $G'$ the image of $G$ under the 
  transitive collapse. Then $(M'[G'],f')$ corresponds to $M[G]$.
\end{Thm}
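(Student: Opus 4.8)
The plan is to reduce the entire statement to the transitive case through the transitive collapse $j\colon M\to M'$, where $(M',f')=\labeledcoll(M)$ and $f'$ is the inverse of $j$ on the ordinals. Since $j$ is an $\in$-isomorphism, $G'\DEFEQ j[G]$ is $j(P)$-generic over the \emph{transitive} model $M'$, so Fact~\ref{thm:transforcing} applies to $M'[G']$: it is transitive, an end-extension of $M'$ with $\ON^{M'[G']}=\ON^{M'}$, and the forcing theorem holds there (also, ``is a $P$-name'' is preserved by $j$). Let $i$ be the uncollapsing map attached to the labeled pair $(M'[G'],f')$, i.e.\ $i(x)=f'(x)$ for $x\in\ON$ and $i(x)=\{i(y):y\in x\}$ otherwise. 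First I would check that $(M'[G'],f')$ really is a labeled model (transitivity and $\ZFCx$ from Fact~\ref{thm:transforcing}, and $f'$ is the monotone labeling inherited from $(M',f')$), so that by Fact~\ref{fact:labeled} the map $i$ is an $\in$-isomorphism onto an ord-transitive set.

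The heart of the argument is the \emph{key identity}
\[
  \ntau[G]^M = i\bigl(j(\ntau)[G']\bigr)
\]
for every $P$-name $\ntau\in M$, which I would prove by $\in$-induction on $\ntau$. The case split in Definition~\ref{def:evaluation} should match a dichotomy downstairs: the first clause applies to $\ntau$ (some $p\in G$ forces $\ntau=\std x$ with $x\in M$) \emph{iff} $j(\ntau)[G']\in M'$, using the forcing theorem in $M'$ together with the fact that $j$ is an isomorphism. In that case both sides equal $x$, since $i(j(x))=x$ by the identity $\uncoll(\labeledcoll(M))=M$ of Fact~\ref{fact:labeled}. Otherwise $j(\ntau)[G']\notin M'$; in particular it is not an ordinal (as $\ON^{M'[G']}=\ON^{M'}\subseteq M'$), so $i$ uses its recursive clause, and expanding $j(\ntau)[G']$ via the ordinary evaluation~\eqref{eq:gurkx} and applying the induction hypothesis to each $\nsigma$ with $(\nsigma,p)\in\ntau\cap M$, $p\in G$, reproduces exactly the second clause of Definition~\ref{def:evaluation}. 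Summing over all names gives $M[G]=i[M'[G']]=\uncoll(M'[G'],f')$, which is precisely the ``moreover'' claim that $(M'[G'],f')$ corresponds to $M[G]$; being an uncollapse of a labeled model, $M[G]$ is ord-transitive.

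The remaining three bullets then transport through these isomorphisms. For the ordinals, since $M[G]$ is ord-absolute I would write $\ON^{M[G]}=M[G]\cap\ON=f'[\ON^{M'[G']}]=f'[\ON^{M'}]=M\cap\ON=\ON^M$. End-extension becomes immediate: if $y\in M[G]$ and $y\in x\in M$, then either $x\notin\ON$, whence $x\subseteq M$ by ord-transitivity of $M$ and so $y\in M$, or $x\in\ON$, whence $y$ is an ordinal lying in $M[G]\cap\ON=\ON^M\subseteq M$. Finally, the forcing theorem for $M[G]$ follows by pulling the transitive forcing theorem of Fact~\ref{thm:transforcing} back along the $\in$-isomorphism $i\colon M'[G']\to M[G]$ and then along $j\colon M\to M'$, using the key identity and the equivalences $p'\in G'\Leftrightarrow j^{-1}(p')\in G$ and $p'\in j(P)\Leftrightarrow j^{-1}(p')\in P$.

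I expect the main obstacle to be the case analysis inside the key identity, specifically establishing the equivalence ``the first clause of Definition~\ref{def:evaluation} applies $\Leftrightarrow j(\ntau)[G']\in M'$''. This is exactly what guarantees that the special standard-name clause fires in precisely those situations where the uncollapse would otherwise assign a spuriously collapsed ordinal value; once it is in place, the direct evaluation $\ntau[G]^M$ and the collapse-based evaluation are forced to agree, and everything else is routine transport along $\in$-isomorphisms.
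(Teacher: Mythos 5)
Your proof is correct and takes essentially the same route the paper intends: the paper's entire proof is the parenthetical ``(The proof is a straightforward induction.)'', and the induction it alludes to is exactly your key identity $\ntau[G]^M=i\bigl(j(\ntau)[G']\bigr)$ reducing everything to the transitive case of Fact~\ref{thm:transforcing} via the labeled-model correspondence --- indeed the paper already introduces the extension as $\uncoll(M'[G'],f')$ with $\ntau[G]^M=I(J(\ntau)[G'])$ in the paragraph preceding Definition~\ref{def:evaluation}, and your dichotomy (the first clause of Definition~\ref{def:evaluation} fires iff $j(\ntau)[G']\in M'$, via the truth lemma in $M'$) is precisely the step that makes that induction go through.
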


(The proof is a straightforward induction.)
So forcing extensions of ord-transitive models behave just like the usual
extensions. For example, we immediately get:

\begin{Cor}\label{cor:pforc}
  If $M$ is countable and ord-transitive, then
  $M\vDash\qb p\forc \varphi(\ntau)\qe$ iff
  $M[G]\vDash\varphi(\ntau[G])$
  for every $M$-generic filter $G$ (in $V$) containing $p$.
\end{Cor}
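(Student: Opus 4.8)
The plan is to derive Corollary~\ref{cor:pforc} directly from the third bullet of Theorem~\ref{thm:forcing}, which already gives the forcing theorem for ord-transitive models, together with the fact that every condition $p\in P\cap M$ lies in some $M$-generic filter $G\in V$ (using that $M$ is countable). First I would fix $p$, $\ntau$, and $\varphi$ with $M$ ord-transitive and countable. For the forward direction, assume $M\vDash\qb p\forc\varphi(\ntau)\qe$, and let $G$ be any $M$-generic filter in $V$ with $p\in G$. Then $p\in G\cap P$ witnesses the right-hand side of the third bullet of Theorem~\ref{thm:forcing}, so $M[G]\vDash\varphi(\ntau[G])$. This direction is essentially immediate.

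The converse is the direction that needs a genericity argument. Suppose $M\nvDash\qb p\forc\varphi(\ntau)\qe$. Then inside $M$ there is some $q\leq p$ forcing $\neg\varphi(\ntau)$; equivalently, the set $D=\{r\in P\cap M: M\vDash r\forc\varphi(\ntau)\text{ or } M\vDash r\forc\neg\varphi(\ntau)\}$ is dense below $p$ in $M$, and the failure of $p\forc\varphi(\ntau)$ means some $q\leq p$ with $M\vDash q\forc\neg\varphi(\ntau)$ exists. The key step is then to build an $M$-generic filter $G\in V$ containing $q$ (and hence $p$): since $M$ is countable, $P\cap M$ has only countably many dense subsets lying in $M$, so one enumerates them as $D_0,D_1,\dots$ and constructs a descending sequence $q=r_0\geq r_1\geq\cdots$ with $r_{n+1}\in D_n$, letting $G$ be the upward closure. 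For such $G$ the third bullet of Theorem~\ref{thm:forcing}, applied to the witness $q\in G\cap P$ forcing $\neg\varphi(\ntau)$, yields $M[G]\vDash\neg\varphi(\ntau[G])$, so the right-hand side of the corollary fails for this $G$. Contrapositively, if $M[G]\vDash\varphi(\ntau[G])$ for every such $G$, then $M\vDash\qb p\forc\varphi(\ntau)\qe$.

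The main obstacle is purely the standard existence-of-generics construction, and here the only subtlety is that $G$ is required to be $M$-generic in the sense of Definition~\ref{def:MQgen}, i.e.\ $G\cap P\cap M$ must be a filter meeting every dense $D\in M$. By Assumption~\ref{asm:notation1} we may treat $P\subseteq M$ and write $P$ for $P\cap M$, so the construction reduces to the classical Rasiowa--Sikorski-style argument for a countable poset with countably many dense sets, which goes through verbatim; one must only check that each $D_n$ is genuinely dense in $P$ as computed in $M$, which is exactly the content of \qb$D_n\in M$ is dense\qe. No new absoluteness issue arises because we never evaluate names in $V$: all forcing statements are interpreted in $M$ and all evaluations are the modified $\ntau[G]^M$ of Definition~\ref{def:evaluation}, precisely as in Theorem~\ref{thm:forcing}. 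Thus the corollary follows by combining the forcing theorem with the countability-driven genericity construction, and I would present it as a short two-direction argument citing Theorem~\ref{thm:forcing} for both implications.
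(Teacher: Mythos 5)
Your proposal is correct and matches the paper's intended argument: the paper derives Corollary~\ref{cor:pforc} immediately from the third bullet of Theorem~\ref{thm:forcing}, with the converse direction resting on exactly the standard Rasiowa--Sikorski construction of an $M$-generic filter in $V$ (available since $M$ is countable) below a condition $q\leq p$ that $M$ thinks forces $\neg\varphi(\ntau)$. You have merely spelled out the details the paper leaves implicit under \qb we immediately get\qe, including the correct observation that all evaluations are the modified $\ntau[G]^M$ of Definition~\ref{def:evaluation}.
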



\begin{Fact}\label{lem:evalabs}
   Assume that $N$ is ord-transitive, $M\in N$, $P\in M$.
   Then the following are absolute between $N$ and $V$
   (for $G\in N$ and $\ntau\in M$):
   \begin{itemize}
     \item $M$ is ord-transitive.
     \item $G$ is $M$-generic, and
     \item (assuming $M$ is ord-transitive and $G$ is $M$-generic) $\ntau[G]^M$.
   \end{itemize}
\end{Fact}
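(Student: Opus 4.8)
The plan is to reduce everything to two basic absoluteness facts and then to check, clause by clause, that the defining conditions only ever quantify over objects lying inside $N$. First I would record the ambient geometry: since $N$ is ord-transitive and $M\in N\setminus\ON$, Definition~\ref{def:fintransetc} gives $M\subseteq N$; likewise $P\in M\subseteq N$ and, by Assumption~\ref{asm:notation1}, $P\subseteq M\subseteq N$. For a name $\ntau\in M$ we have $\ntau\in N$, and Facts~\ref{lem:hc} yields $\ordclos(\ntau)=\ordclos^N(\ntau)\subseteq N$; thus every subname of $\ntau$ and every condition occurring in it lies in $N$. The two tools are: (i) for $M\in N$ and parameters $\bar a\in M$, the satisfaction relation $M\vDash\varphi(\bar a)$ --- and hence the forcing relation $M\vDash p\forc\varphi(\bar{\ntau})$, which is just satisfaction in $M$ of the $\ZFCx$-definable formula asserting that $p$ forces $\varphi$ --- is absolute between $N$ and $V$, since evaluating it quantifies only over elements of $M\subseteq N$ and over the finite object $\varphi$; and (ii) for $x\in N$, both ``$x\in\ON$'' and the $\in$-relation are absolute between $N$ and $V$, the former by ord-absoluteness of $N$ (so $\ON^N=N\cap\ON$) and the latter because $N$ carries the true membership relation.

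For the first item I would unwind Definition~\ref{def:fintransetc}. The ord-absoluteness clauses read: $\omega\subseteq M$ (a statement about members of $\omega\subseteq N$, hence absolute); $\omega^M=\omega$, where $\omega^M$ is the unique $y\in M\subseteq N$ with $M\vDash y=\omega$, located absolutely by (i) and then compared to the true $\omega=\omega^N$; and $\ON^M\subseteq\ON$, i.e. $\forall x\in M\,(M\vDash x\in\ON\to x\in\ON)$, whose quantifier ranges over $M\subseteq N$ and whose two atoms are absolute by (i) and (ii). The closure clause $x\in M\setminus\ON\Rightarrow x\subseteq M$ again quantifies over $x\in M\subseteq N$; for such $x$ one has $x\subseteq N$, so ``$x\subseteq M$'' reduces to an absolute statement about members of $N$. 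Hence $N\vDash$``$M$ is ord-transitive'' iff $V$ satisfies the same. (Alternatively one may invoke the characterization $\ordclos(M)=M$ from Facts~\ref{lem:hc}.)

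The genericity clause is handled the same way from Definition~\ref{def:MQgen} under Assumption~\ref{asm:notation1}, where $D\cap M=D$ and $G\cap P\cap M=G\cap P$. Since $G\in N\setminus\ON$ we get $G\subseteq N$, and as $N\vDash\ZFCx$ the sets $G\cap P$ and $G\cap D$ lie in $N$ and are computed absolutely. The filter condition is a bounded assertion about $p,q,r\in G\cap P\subseteq N$ with the atom ``$r\leq p$'' (i.e.\ $(r,p)\in\,\leq$) absolute; the density condition is $\forall D\in M\,(M\vDash\text{``}D\text{ is dense''}\to G\cap D\neq\emptyset)$, whose quantifier ranges over $M\subseteq N$ and whose atoms are absolute by (i) and by $\in$-absoluteness. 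So ``$G$ is $M$-generic'' is absolute.

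The evaluation $\ntau[G]^M$ I would treat by induction on the $P$-name $\ntau\in M$ (on its rank), following Definition~\ref{def:evaluation}; this is where the only real work lies. The point is that the entire recursion tree stays inside $N$: since $\ordclos(\ntau)\subseteq N$, every subname $\nsigma$ with $(\nsigma,p)\in\ntau\cap M$ lies in $N$ (indeed in $M$), so by the induction hypothesis $\nsigma[G]^M$ is computed identically in $N$ and in $V$ and lies in $N$. In the first (standard-name) case the value is the $x\in M\subseteq N$ witnessed by the absolute relation $M\vDash p\forc\ntau=\std x$ for some $p\in G\cap P$ (well-defined by $M$-genericity), hence absolute; in the second case the value $\{\nsigma[G]^M:(\exists p\in G\cap P)\,(\nsigma,p)\in\ntau\cap M\}$ is formed by an absolute bounded operation from objects of $N$, so it is computed identically and, since $N\vDash\ZFCx$, lies in $N$. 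The main obstacle is exactly this bookkeeping --- verifying that $\ntau[G]^M$ is an element of $N$, so that the absoluteness assertion is even meaningful, and that the name-recursion carried out in $N$ does not diverge from the one in $V$ --- and it is resolved by the inclusion $\ordclos(\ntau)\subseteq N$ from Facts~\ref{lem:hc} together with the absoluteness of the forcing relation.
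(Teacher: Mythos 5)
Your proof is correct, and it follows exactly the route the paper intends: the paper states Fact~\ref{lem:evalabs} without proof (like Theorem~\ref{thm:forcing}, it is left as a ``straightforward induction''), and your argument is precisely that induction spelled out --- absoluteness of satisfaction in $M\in N$, containment of the whole recursion tree in $N$ via $\ordclos(\ntau)\subseteq M\subseteq N$ from Facts~\ref{lem:hc}, and bounded, absolutely evaluated quantifiers in each clause. Your only implicit simplifications ($M,G\notin\ON$, and $\ZFCx$ strong enough in $N$ to run the satisfaction and name recursions) are harmless and consistent with the paper's standing assumptions.
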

The last item means that we get the same value for $\ntau[G]^M$ whether we
calculate it in $N$ or $V$.  It does {\em not} mean $\ntau[G]^M=\ntau[G]^N$.
(If $\ntau$ is in $M$, then $\ntau[G]^N$ will generally not be an interesting
or meaningful object.)

Let us come back once more to the proper case. By induction on
the rank of the names we get that the ord-collapse and 
forcing extension commute:
\begin{Lem}
  Assume that $N\esm H(\chi)$, and $P\in N$. Let $i:N\to M$ be the ord-collapse.
  \begin{itemize}
    \item $G\subseteq P$ is $N$-generic iff $i[G]$ is $M$-generic.
    \item Assume that $G$ is $N$- and $V$-generic. Then the ord-collapse $I$ of
      $N[G]$ extends $i$, and $I(\ntau[G])=(i(\ntau))[i[G]]$.
    \item If $P\subseteq \hco$, then $i$ is the identity on $P$.
  \end{itemize}
\end{Lem}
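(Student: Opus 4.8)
The plan is to prove the three items in order, with the name equation in the second item as the crux and the other two essentially routine. Throughout I use that $i\colon N\to M$ is an $\in$-isomorphism fixing the ordinals (Fact~\ref{lem:ordcol}), that $M=\ordcol(N)$ is ord-transitive, and that (under Assumption~\ref{asm:notation1}) dense sets and names living in $N$ have all relevant elements in $N$. For the first item I would argue directly: since $i$ is an $\in$-isomorphism, ``$D$ is dense in $P$'' holds in $N$ iff ``$i(D)$ is dense in $i(P)$'' holds in $M$, and $p\in G\cap P\cap N$ with $p\in D$ corresponds under $i$ to $i(p)\in i[G]$ with $i(p)\in i(D)$ (the filter clause transfers verbatim). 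Hence $G\cap P\cap N$ meets every dense $D\in N$ exactly when $i[G]$ meets every dense subset of $i(P)$ in $M$, i.e.\ $N$-genericity is equivalent to $M$-genericity; alternatively one factors through the common transitive collapse, using that the ord-collapse commutes with it (Fact~\ref{lem:ordcol}).

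For the second item I first note that $I$ extends $i$: since $N[G]$ end-extends $N$ (Fact~\ref{thm:transforcing}) we have $x\cap N[G]=x\cap N$ for $x\in N$, so a rank induction gives $I(x)=i(x)$. The equation $I(\ntau[G])=(i(\ntau))[i[G]]$ is then proved by induction on the $\in$-rank of $\ntau\in N$, following the two clauses of Definition~\ref{def:evaluation} for $(i(\ntau))[i[G]]^M$. If some condition in $i[G]$ forces $i(\ntau)=\std{x}$ (first clause), then by the isomorphism some $p\in G$ satisfies $N\vDash p\forc\ntau=\std{w}$ with $i(w)=x$, so by the forcing theorem (Fact~\ref{thm:transforcing}) $\ntau[G]=w$ and $I(\ntau[G])=I(w)=i(w)=x$; this clause uses only $I\restriction N=i$. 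Otherwise (second clause) $(i(\ntau))[i[G]]^M=\{\,i(\nsigma)[i[G]]^M:(\nsigma,p)\in\ntau\cap N,\ p\in G\,\}$, the induction hypothesis rewrites each $i(\nsigma)[i[G]]^M$ as $I(\nsigma[G])$, and elementarity of $N[G]\esm H^{V[G]}(\chi)$ identifies $\ntau[G]\cap N[G]$ with $\{\,\nsigma[G]:(\nsigma,p)\in\ntau\cap N,\ p\in G\,\}$; applying $I$ in its non-ordinal branch to $\ntau[G]$ then yields the same set.

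The delicate step, which I expect to be the main obstacle, is the alignment of the two dichotomies: the ``standard name / otherwise'' split of Definition~\ref{def:evaluation} must coincide with the ``ordinal / non-ordinal'' split in the definition of $I$. The key observation is that if $\ntau[G]$ is an ordinal $\alpha$, then by the forcing theorem some $p\in G$ forces $\ntau=\std{\alpha}$ and $\alpha=i(\alpha)\in M$, so the first clause necessarily applies; contrapositively, in the second clause $\ntau[G]$ is never an ordinal, which is precisely what justifies using the non-ordinal branch of $I$ above.

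For the third item, assume $P\subseteq\hco$ and fix $p\in P\cap N$. Since $\ordclos$ is definable and $p\in N$, we have $\ordclos(p)\in N$, and $p\in\hco$ makes $\ordclos(p)$ countable; a countable set belonging to $N$ is a subset of $N$, being the range of a surjection $\omega\to\ordclos(p)$ in $N$ together with $\omega\subseteq N$. As $\ordclos(p)$ is ord-transitive (Facts~\ref{lem:hc}), every non-ordinal $t$ in it satisfies $t\subseteq\ordclos(p)\subseteq N$, hence $t\cap N=t$; a rank induction then gives $i(t)=t$ throughout $\ordclos(p)$, in particular $i(p)=p$.
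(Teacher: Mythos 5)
Your proposal is correct and follows essentially the same route as the paper, which proves this lemma only by the remark ``by induction on the rank of the names'': your rank induction on $\ntau$, split along the two clauses of Definition~\ref{def:evaluation}, is exactly the intended argument, and your alignment of the standard-name clause with the ordinal branch of $I$ is the same device the paper makes explicit in the proof of Lemma~\ref{lem:pmcompwithesm} (the antichain deciding whether $\ntau\in V$). The treatments of the first and third items (transfer of dense sets along the $\in$-isomorphism, and $\ordclos(p)\subseteq N$ for $p\in\hco$ forcing $i(p)=p$) are the routine arguments the paper takes for granted.
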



\subsection{$M$-complete subforcings}\label{ss:oracle}

In the rest of the paper,  we will use ord-transitive models in the context of
definable proper forcings (similar to Suslin proper).  But first let us briefly
describe another, simpler, setting in which ord-transitive models can be used.

Let $M$ be a countable transitive model and $Q^M$ a forcing notion in $M$. We
say that $Q^M$ is an $M$-complete subforcing of $Q\in V$, if $Q^M$ is a
subforcing of $Q$ and every maximal antichain $A\in M$ of $Q^M$ is a maximal
antichain in $Q$ as well.
So there are two differences to the ``proper'' setting: $Q^M$ and $Q$
do not have to be defined by the same formula,\footnote{They do not have to be
nicely definable at all, and furthermore $Q^M$ and $Q$ can be entirely
different: E.g., $Q^M$ could be Cohen forcing in $M$ and $Q$ could be
(equivalent to) random forcing in $V$.}
and we do not just require that below every condition in $Q^M$ we find a
$Q^M$-generic condition in $Q$, but  that already the empty condition is
$Q^M$-generic.\footnote{In the proper case, this is equivalent to ``$Q$ is
ccc''.}

For transitive models, this concept has been used for a long time. It is, e.g.,
central to Shelah's oracle-cc~\cite[IV]{MR1623206}. In oracle-cc forcing, one
typically constructs a forcing notion $Q$ of size $\al1$ as follows: Construct
(by induction on $\alpha\in\om1$) an increasing (non-continuous) sequence of
countable transitive models $M^\alpha$ (we can assume that $M^\alpha$ knows
that $\alpha$ is countable), and forcing notions $Q^\alpha\in M^\alpha$ such
that $Q^\delta=\bigcup_{\beta<\delta} Q^\beta$ for limits $\delta$ and such
that $Q^{\beta+1}$ is an $M^\beta$-complete superforcing of $Q^\beta$.  (Then
each $Q^\alpha$ will be $M^\alpha$-complete  subforcing of the final $Q$.) So
we use the pair $(M^\alpha,Q^\alpha)$ as an approximation to the final forcing
notion $Q$. Since we use transitive models, this $Q$ has to be subset of
$H(\al1)$.

If we want to investigate other forcings, we can try to use ord-transitive
models instead. For example, in~\cite{BCDBC} we use a forcing iteration $\bar
P=(P_\alpha,Q_\alpha)_{\alpha\le\om2}$ (where each $Q_\alpha$ consists of
conditions in $H(\al1)$), and we ``approximate'' $\bar P$ by pairs $(M^x,\bar
P^x)$, where $M^x$ is a countable ord-transitive model and $M^x$ thinks that
$\bar P^x$ is a forcing iteration of length $\om2^V$. Instead of assuming that
$P^x_{\om2}$ is a subforcing of $P_{\om2}$, it is more natural to assume
(inductively) that each $P^x_\alpha$ can be canonically (and in particular
$M^x$-completely) embedded into $P_\alpha$, and that $P_\alpha$ forces that
$Q^x_\alpha[G^x_\alpha]$ (evaluated by the induced $P^x_\alpha$-generic filter
$[G^x_\alpha]$) is an $M^x[G^x_\alpha]$-complete subforcing of $Q_\alpha$.  We
show that given $\bar P^x$ in a countable ord-transitive model $M^x$ we can
find variants of the finite suppost and the countable support iterations $\bar
P$ such that $\bar P^x$ canonically embeds into $\bar P$ (and we show that some
preservation theorems that are known for proper countable support iterations
also hold for this variant of countable support). For this 
application it is enough to consider $\cf\omega$-absolute models.

In the current paper, we do something very similar (in the nep setting, i.e.,
the definable/proper framework), in Sections~\ref{ss:nonabs}
and~\ref{ss:subsetiteration}. Let us again stress the obvious difference: In
the nep case, we use definable forcings, and $Q^x_\alpha$ is the evaluation in
$M^x[G^x_\alpha]$ of the same formula that defines $Q_\alpha$ in $V[G_\alpha]$,
and we just get that below (the canonical image of) each $p\in P^x_\alpha$
there is some $M^x$-generic $q\in P_\alpha$. 

In particular, the application of non-wellfounded models in~\cite{BCDBC} does
not use any of the concepts that are introduced in the rest of this paper.

\section{Nep forcing}\label{sec:nep}


\subsection{Candidates}





We now turn our attention to definable forcings. More particularly, we will
require that for all suitable (ord-transitive) models $M$, ``$x\in Q$'' is
upward absolute between $M$ and $V$.%
\footnote{There are useful notions similar to nep without this property. 
  Examples for such forcings appear naturally when iterating nep forcings,
  cf. Subsection~\ref{ss:nonabs}.}
Also, we will require that for all $x\in Q$ there is a model $M$ knowing that
$x\in Q$.  This is only possible if $Q\subset \hco$ (since every countable
ord-transitive model is hereditarily countable modulo ordinals),
but it is not required that $Q\subseteq H(\al1)$ (as it is the case when using
countable transitive models only).

It is
natural to allow parameters other than just reals.  The following is a simple
example of a definable iteration using a function $\mpar: \om1\to 2$ as
parameter:
$(P_\beta,Q_\beta)_{\beta<\om1}$ is the countable support iteration such that
$Q_\beta$ is Miller forcing if $\mpar(\beta)=0$ and
random forcing if $\mpar(\beta)=1$.

Once we use such a parameter $\mpar$, we of course cannot assume that $\mpar$
is in the model $M$ (since $M$ is countable and ord-transitive). Instead, we
will assume
that $M$ contains its own version $\mpar_M$ of the parameter; in our example
we would require that $\delta:=\om1^V\in M$ and that $M$ thinks that $\mpar_M$
is a function from $\delta$ to 2, (so really $\dom(\mpar_M)=\delta\cap M$) and
we require that $\mpar_M(\beta)=\mpar(\beta)$ for all $\beta\in M$. 

More generally we define ``$\mpar$ is a nep parameter'' by induction 
on the rank: $\emptyset$ is a nep-parameter, and
\begin{Def}
  $\mpar$ is a nep-parameter, if $\mpar$ is a function
  with domain $\beta\in\ON$ and $\mpar(\alpha)$
  is a nep-parameter for all $\alpha\in\beta$.\\
  Let $M$ be an ord-transitive model. Then $\mpar_M$ is the $M$-version of
  $\mpar$, if $\dom(\mpar_M)=\dom(\mpar)\cap M$ and
  $\mpar_M(\alpha)$ is the $M$-version
  of $\mpar(\alpha)$ for all $\alpha\in \dom(\mpar_M)$.
\end{Def}

In other words: A nep-parameter is just an arbitrary set together with a
hereditary wellorder.

If $M$ contains $\mpar_M$, then $M$ thinks that $\mpar_M$ is a nep-parameter
(and if $\beta=\dom(\mpar)$, then $\beta\in M$ and $M$ thinks
$\beta=\dom(\mpar_M)$).

We can canonically code a real $r$, an ordinal, or a subset of the ordinals as
a nep-parameter.

\begin{Def}
  Let $\mpar$ be a nep-parameter.
  $M$ is a $(\ZFCx,\mpar)$-candidate, if $M$ is a countable, ord-transitive,
  successor absolute
  model of $\ZFCx$ and contains $\mpar_M$, the $M$-version of
  $\mpar$.
\end{Def}

We can require many additional absoluteness conditions for candidates, e.g.,
the absoluteness of the canonical coding of $\alpha\times\alpha$, or
$\cf\omega$-absoluteness.  The more conditions we require, the less candidates
we will get, i.e., the weaker the properness notion ``for all candidates, there
is a generic condition'' is going to be. In practice
however, these distinctions do not seem to matter: All nep
forcings will satisfy the (stronger) official definition, and for all
applications  weaker versions suffices.

To be more specific: Most applications will only use properness for candidates
$M$ that satisfy
\begin{equation}\label{eq:internalextension}
  \text{$M$ is an internal forcing extension of an elementary submodel $N$.}
\end{equation}
More exactly: We start with $N\esm H(\chi)$, pick some $P\in N$, set
$(N',P')=\ordcol(N,P)$, and let $G\in V$ be $P'$-generic over $N'$.
Some application might also use
\begin{equation}\label{eq:cccextension}
  \text{$M$ is an elementary submodel in a $P$-extension, for a $\sigma$-complete $P$.}
\end{equation}
More exactly: Let  $P$ be $\sigma$-complete, pick in the $P$-extension $V[G]$
some $N\esm H^{V[G]}(\chi)$ and let $N'$ be the ord-collapse. Then $N'$ is in
$V$ (and an ord-transitive model).

Of course all these models satisfy a variety of absoluteness properties (such
as the canonical coding of $\alpha\times\alpha$ etc).  So for all applications,
it would be enough to consider candidates that
satisfy~\eqref{eq:internalextension} (or some exotic application might
need~\eqref{eq:cccextension}), but we we do not
make the properties~\eqref{eq:internalextension} or~\eqref{eq:cccextension}
part of the official definition of ``candidate'', since both properties are
much more complicated (and less absolute) than just ``$M$ is a countable,
ord-transitive $\ZFCx$-model''.

Note however that generally we can {\em not} assume that the $P$ used
in~\eqref{eq:internalextension} is proper or even just $\om1$-preserving. For
example in the application in~\cite{MR2155272}, we need $P$ to be a collapse of
$\al1$.  So in particular we can not assume that all candidates are
$\cf\omega$-absolutene.

We will only be interested in the normal case:
\begin{Def}\label{def:zfcnormal}
  $\ZFCx$ is normal, if $H(\chi)\vDash\ZFCx$ for sufficiently large
  regular $\chi$.
\end{Def}

Sometimes we will assume that $\ZFCx$ is element of a candidate $M$. This allows
us to formulate, e.g., ``$M$ thinks that $M'$ is a candidate''.  We can
guarantee this by choosing $\ZFCx$ recursive, or by coding it into $\mpar$. 

\begin{Lem}\label{lem:basiccandidates}
  \begin{enumerate}
    \item (Assuming normality.) If $N\esm H(\chi)$ contains $\mpar$, and $(M,\mpar_M)$
      is the ord-collapse of $(N,\mpar)$, then $M$ is candidate and
      $\mpar_M$ is the $M$-version of $\mpar$.
    \item The statements ``$\mpar_M$ is the $M$-version of $\mpar$''
      is absolute between transitive universes. If
      $\mpar_M$ is the $M$-version of $\mpar$, and $M$ thinks
      that $M'$ is ord-transitive and
      that $\mpar_{M'}$ is the $M'$-version of $\mpar_M$, then
      $\mpar_{M'}$ is the $M'$-version of $\mpar$.
    \item If $M[G]$ is a forcing extension of $M$, and $\mpar_M$
      the $M$-version of $\mpar$, then $\mpar_M$ is also the
      $M[G]$-version of $\mpar$.
    \item For $x\in\hco$, a nep parameter $\mpar$ 
      and a theory $T$ in the language $\{\in, c^x, c^\mpar\}$,
      the existence of a candidate $M$ containing $x$ such that
      $(M,\in,x,\mpar_M)$ satisfies $T$ is absolute between universes
      containing $\om1^V$ (and, of course, $x$,$\mpar$ and $T$).
  \end{enumerate}
\end{Lem}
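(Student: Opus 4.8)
The plan is to prove all four parts by induction on the rank of $\mpar$, with base case $\mpar=\emptyset$, using the transitive-collapse/labeled-model dictionary (Facts~\ref{lem:ordcol} and~\ref{fact:labeled}) as the standard bridge whenever an $M$-internal statement must be turned into a statement computed in $V$. The structural fact used throughout is that ``$\mpar_M$ is the $M$-version of $\mpar$'' is a single $\in$-recursion — $\dom(\mpar_M)=\dom(\mpar)\cap M$ and $\mpar_M(\alpha)$ is the $M$-version of $\mpar(\alpha)$ for $\alpha\in\dom(\mpar_M)$ — with domains controlled by ord-absoluteness, $\ON^M=M\cap\ON$. For (1), I would first check that $M=\ordcol(N)$ is a candidate: it is countable and, by Fact~\ref{lem:ordcol}, ord-transitive; it models $\ZFCx$ because normality (Definition~\ref{def:zfcnormal}) gives $H(\chi)\vDash\ZFCx$, $N\esm H(\chi)$, and the ord-collapse is an $\in$-isomorphism; and it is successor-absolute, since ord-collapses of elementary submodels are (indeed $\cf\omega$-absolute, as remarked above). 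That $\mpar_M=i(\mpar)$ is the $M$-version follows by induction: since $i$ fixes ordinals and $M\cap\ON=N\cap\ON$, the set $i(\mpar)$ is a function with domain $\dom(\mpar)\cap M$, and $i(\mpar)(\alpha)=i(\mpar(\alpha))$ is the $M$-version of $\mpar(\alpha)$ by the inductive hypothesis.

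Part (3) is the most direct: by Theorem~\ref{thm:forcing}, $M[G]\supseteq M$ is an end-extension with $\ON^{M[G]}=\ON^M$, so for $\beta\DEFEQ\dom(\mpar)$ we get $\beta\cap M[G]=\beta\cap\ON^{M[G]}=\beta\cap\ON^M=\beta\cap M=\dom(\mpar_M)$. An induction on rank handles the values: each $\mpar_M(\alpha)$ is the $M$-version, hence (by the inductive hypothesis) the $M[G]$-version, of $\mpar(\alpha)$, so $\mpar_M$ satisfies the defining recursion of the $M[G]$-version, which is unique.

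For (2), the first (absoluteness) clause holds because the defining recursion is well-founded of rank at most $\rk(\mpar)$ and every quantifier in it is bounded by $\trclos(\mpar)\cup\trclos(\mpar_M)\cup M$; this makes ``$\mpar_M$ is the $M$-version of $\mpar$'' $\Delta_1$ in these parameters and hence absolute between transitive universes. For the compositionality clause I would argue in $V$ by induction on $\rk(\mpar)$. First, since $M$ thinks $M'$ is ord-transitive and $M'\in M\setminus\ON$, ord-transitivity of $M$ gives $M'\subseteq M$, whence $\dom(\mpar)\cap M'\subseteq\dom(\mpar)\cap M=\dom(\mpar_M)$. Next I would convert the hypothesis ``$M\vDash\mpar_{M'}$ is the $M'$-version of $\mpar_M$'' into the genuine $V$-statement via the transitive collapse of $M$ together with the absoluteness just established, and then match domains and, for each relevant $\alpha$, apply the inductive hypothesis to $\mpar(\alpha)$, using that $\mpar_{M'}(\alpha)$ is the $M'$-version of the $M$-version $\mpar_M(\alpha)$ of $\mpar(\alpha)$. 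The main obstacle is exactly this conversion: $\dom$, $\cap$ and ``$\subseteq$'' are \emph{not} absolute for ord-transitive models (the footnoted examples), so the hypothesis computed inside $M$ cannot be read off directly in $V$; passing through the transitive collapse, where everything is honestly transitive and the first clause of (2) applies, is what makes the induction go through.

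Finally, for (4) I would use the labeled-model correspondence (Fact~\ref{fact:labeled}) to replace each candidate $M$ by a pair $(M',f)$ with $M'$ countable transitive (hence coded by a real) and $f$ an order-embedding of $M'\cap\ON$ into $\ON$. The existence of a candidate containing $x$ with $(M,\in,x,\mpar_M)\vDash T$ then becomes the existence of a real coding a well-founded model of $\ZFCx$ together with a labeling satisfying the conditions ``$\mpar_M$ is the $M$-version of $\mpar$'', ``$M$ is successor-absolute'', ``$x\in M$'' and $T$. Since well-foundedness is $\bS11$ and the remaining conditions are arithmetic in the code once the (absolute) ordinal data are fixed, the whole assertion is essentially $\bS12$ in a real parameter coding the countable part of $x,\mpar,T$ together with ordinal parameters, and Shoenfield absoluteness yields invariance between universes containing $\om1^V$. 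The delicate point is the labeling $f$, whose range may be cofinal in large ordinals: I would handle it by noting that the ordinals forced to lie in $M\cap\ON$ are precisely those coming from $\ordclos(x)$ and from $\dom(\mpar)\cap M$ — both determined by the fixed parameters $x,\mpar$ — and that ordinals are absolute, so the ordinal data can be carried as honest parameters without disturbing the $\bS12$ absoluteness.
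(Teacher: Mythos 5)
Your treatments of items (1)--(3) are fine and match the paper, which simply declares these items straightforward (your rank-induction through the ord-collapse, the end-extension property $\ON^{M[G]}=\ON^M$ from Theorem~\ref{thm:forcing}, and the detour through the transitive collapse for the composition clause are exactly the intended routine arguments). The problem is item (4), where the paper does \emph{not} regard the statement as an instance of classical Shoenfield absoluteness, and your reduction to a ``$\bS12$ statement in a real parameter together with ordinal parameters'' does not go through. Classical Shoenfield absoluteness is a theorem about $\bS12$ formulas with \emph{real} parameters; its proof builds a tree on $\omega\times\om1$ from the arithmetic matrix of the formula, and there is no way to ``carry ordinal data as honest parameters'' into it. Your key patching claim --- that the ordinals in $M\cap\ON$ are precisely those determined by $\ordclos(x)$ and $\dom(\mpar)\cap M$ --- is false: a candidate may (and must) contain further ordinals ($\om1^M$, witnesses demanded by $\ZFCx$ and $T$), and, more fatally, $\dom(\mpar)\cap M$ is not determined by the parameters at all. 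It is an arbitrary countable subset of the possibly uncountable ordinal $\dom(\mpar)$, chosen as part of the existential over $M$, and the same happens recursively at every level of $\fclosure(\mpar)$ and along the labeling $f$ of $M'\cap\ON$. This set-sized existential data cannot be coded into a fixed real, which is exactly why the paper's Remark after Lemma~\ref{lem:shoenfieldlevy} restricts the genuine $\bS12$ reformulation to the case where $\mpar$ and $x$ are reals.

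What the paper does instead is prove a bespoke Levy--Shoenfield-type lemma (Lemma~\ref{lem:shoenfieldlevy}): working inside the inner universe $L'$, one defines a partial order $T$ whose nodes are \emph{finite approximations} to a candidate --- finite $\cL'$-structures (for a Skolemized, $\forall\exists$-normalized conservative extension $S'$ of the theory) equipped with partial valuation maps $\textrm{ord-val}$, $x\textrm{-val}$, $\mpar\textrm{-val}$ into the actual parameters, plus a rank function into some $\delta\geq\om1^V$ --- such that a good candidate exists iff $T$ has an infinite descending chain. Absoluteness of illfoundedness between transitive ZFC models then transfers existence from $V$ down to $L'$; note that this is where the hypothesis $\om1^V\in L'$ is genuinely used, since the countable wellfounded candidate in $V$ only admits a rank function with countable range, which must be available in $L'$. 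So your instinct (``a modification of Shoenfield'') points in the right direction, but the actual argument requires redoing the tree construction at the level of arbitrary ordinal- and $\hco$-parameters rather than citing the real-parameter theorem; as written, your proof of (4) has a genuine gap.
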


This is straightforward, apart from the last item, which follows from the
following modification of Shoenfield absoluteness.

\begin{Rem}
  Shelah's paper~\cite{MR2115943} uses another notion of nep-parameter
  With our definition, for
  every $\mpar$ and $M$ there is exactly one $M$-version $\mpar_M$ of $\mpar$,
  but this is not the case for Shelah's notion. (There, a
  candidate is defined as pair $(M,\mpar_M)$ such that $\mpar_M\in M$
  is an $M$-version of $\mpar$.) Both notions satisfy  
  Lemma~\ref{lem:basiccandidates}.
\end{Rem}

\begin{Lem}\label{lem:shoenfieldlevy}
  Assume that
  \begin{itemize}
    \item $S$ is a set of sentences in the first order language 
      using the relation symbol $\in$ and the constant symbols $c^x,c^\mpar$,
    \item $\ZFCx\subseteq \ZFC$,
    \item $L'$ is a transitive ZFC-model
      (set or class) containing 
      $\ZFCx$, $\om1^V$, $\mpar$, and $S$,
    \item $x\in \hco^{L'}$.
  \end{itemize}
  If in $V$ 
  there is a $(\ZFCx,\mpar)$-candidate $M$ containing $x$
  such that $(M,\in,x,\mpar_M)\vDash S$, then there is such a
  candidate in $L'$.
\end{Lem}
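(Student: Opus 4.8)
The plan is to reduce the assertion to a single $\Sigma_1$ formula in set-theoretic parameters that all lie in $L'$, and then to invoke L\'evy--Shoenfield absoluteness. First I would fix the parameters $x$, $\mpar$, $S$ and $\om1^V$, all of which are elements of the transitive $\ZFC$-model $L'$ by hypothesis (and $\ZFCx\in L'$ lets $L'$ even express the word ``candidate''). Using the correspondence of Fact~\ref{fact:labeled}, I would present a candidate $M$ by its transitive collapse $\bar M$ together with the labeling $f:\bar M\cap\ON\to\ON$; the set $\bar M$ is transitive and countable, hence codeable, while the range of $f$ is the countable set $A\DEFEQ M\cap\ON$ of (possibly large) ordinals. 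The point is that every defining clause of ``$M$ is a $(\ZFCx,\mpar)$-candidate containing $x$ with $(M,\in,x,\mpar_M)\vDash S$'' is witnessed by hereditarily-countable-modulo-ordinals data: countability by a surjection from $\omega$; wellfoundedness of $\bar M$ by the existence of the transitive collapse itself (which makes an a priori $\Pi^1_1$ condition into a $\Sigma_1$ one); ord-transitivity and successor-absoluteness are first-order over $M$ relative to $\om1^V$; and ``$(M,\in,x,\mpar_M)\vDash S$'' is first-order over the countable structure $M$ once $S$ is given.

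The delicate clause is ``$\mpar_M$ is the $M$-version of $\mpar$'', since $\dom(\mpar)$ may be an arbitrarily large ordinal. Here I would invoke Lemma~\ref{lem:basiccandidates}(2), which states precisely that this assertion is absolute between transitive universes; thus it imposes no non-absolute constraint, and the recursive matching of $\mpar_M$ against $\mpar$ along the countable set $A$ is computed identically in $V$ and in $L'$. Because $\mpar\in L'$ and $A$ is a countable subset of $\ON\subseteq L'$, and because $x\in\hco^{L'}$ guarantees that the ordinal-leaves of $x$ already occur in $L'$, the witness requires no ordinal outside $L'$. Assembling these observations, the hypothesis ``in $V$ there is such a candidate'' becomes a $\Sigma_1$ statement $\Phi(x,\mpar,S,\om1^V)$ whose only parameters are $x$, $\mpar$, $S$ and $\om1^V$, all lying in $L'$.

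Finally I would apply L\'evy--Shoenfield absoluteness: a $\Sigma_1$ formula with parameters in $\hco\cup\ON$ is absolute between $V$ and any transitive model of a sufficient fragment of $\ZFC$ that contains the parameters and $\om1^V$. Since $L'$ is such a model, $\Phi$ reflects from $V$ down to $L'$, yielding a candidate $M\in L'$ with the required properties; successor-absoluteness and ord-transitivity of the reflected $M$ hold in $L'$ because $\om1^V\in L'$. The main obstacle is verifying that the existence statement is genuinely $\Sigma_1$ rather than $\Sigma^1_2$ in disguise --- that is, showing that the large ordinals occurring in $\dom(\mpar)$ and among the ordinal-leaves of $x$ can be absorbed into absolute data via Fact~\ref{fact:labeled} and Lemma~\ref{lem:basiccandidates}(2), so that L\'evy--Shoenfield (rather than plain Shoenfield absoluteness on reals) is the correct tool.
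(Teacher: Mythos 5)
Your reduction to a single $\Sigma_1$ statement $\Phi(x,\mpar,S,\om1^V)$ is essentially correct but is the easy part (labeled models via Fact~\ref{fact:labeled}, absoluteness of ``$\mpar_M$ is the $M$-version of $\mpar$'' via Lemma~\ref{lem:basiccandidates}(2)). The genuine gap is the final step: the ``L\'evy--Shoenfield absoluteness'' you invoke --- that every $\Sigma_1$ formula with parameters in $\hco\cup\ON$ is absolute between $V$ and any transitive ZFC-model containing the parameters and $\om1^V$ --- is false as stated. Take $L'=L$ in a universe where $\om1^L<\om1^V$, let $\alpha=\om1^L$ (an ordinal, hence a legitimate parameter lying in $L'$, which also contains $\om1^V$), and let $\varphi(\alpha)$ be the $\Sigma_1$ formula $(\exists f)\,(f:\omega\to\alpha\text{ surjective})$: it holds in $V$ but fails in $L$. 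The classical theorem is confined to real parameters whose codes lie in the inner model precisely to block such examples; downward $\Sigma_1$-reflection into an arbitrary transitive $L'$ simply fails for ordinal or $\hco$ parameters. So verifying that $\Phi$ is ``genuinely $\Sigma_1$'' --- which you flag as the main obstacle --- buys nothing by itself. The whole content of the lemma is that this particular $\Sigma_1$ statement reflects while others with the same kind of parameters do not: a countable ord-transitive candidate can contain $\om1^L$ without collapsing it, because $\Phi$ pins the ordinals occurring in $x$ and $\mpar$ only up to their membership pattern, and it is exactly this feature that a proof must exploit.

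Accordingly, the paper does not cite a black-box absoluteness theorem but redoes the Shoenfield argument for this specific statement: working in $L'$, it fixes enumerations of the sets in $\{x\}\cup\trclos(x)$, a bound $\delta\geq\om1^V$ on the relevant ordinals, Skolemizes $S\cup\ZFCx$ into a $\forall\exists$-theory $S'$, and defines a partial order $T\in L'$ whose nodes are finite $\cL'$-structures equipped with partial valuations $\textrm{ord-val}$, $x\textrm{-val}$, $\mpar\textrm{-val}$ into actual $L'$-data, together with a finite rank function into $\delta$. It then proves that $T$ has an infinite descending chain iff a good candidate exists, and concludes by absoluteness of ill-foundedness between transitive ZF-models. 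Note also where $\om1^V$ really enters: a good candidate $M\in V$ is countable in $V$, so its $\in$-relation admits a rank function into $\om1^V\leq\delta$, which is what converts $M$ into a chain in $T$; it is not needed for successor-absoluteness of the reflected model, as you suggest. Your proposal assembles the paper's encoding ingredients but omits the approximation order and the ill-foundedness transfer --- that is, the entire proof.
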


\begin{proof}
  We call such a candidate a good candidate. 
  So we have to show:
  \begin{equation}
    \text{If there is a good candidate in $V$,
    then there is one in $L'$.}
  \end{equation}
  Just as in the proof of Shoenfield absoluteness,
  we will show that 
  a good candidate $M$ corresponds to an infinite descending
  chain in a partial order $T$
  defined in $L'$. (Each node of $T$ is a finite 
  approximation to $M$). Then we use that the
  existence of such a chain is absolute.

  We define for a nep-parameter $y$
  \begin{equation}
    \fclosure(y)=\{ y(a):\, a\in\dom(y)\}\cup \bigcup_{a\in
    \dom(y)}\fclosure(y(a)).
  \end{equation}
  So every $z\in\fclosure(y)$ is again a nep-parameter.

  Fix in  $L'$ for every $y\in (\{x\}\cup \trclos(x))\setminus\ON$
  an enumeration 
  \begin{equation}\label{eq:myenum1}
    y=\{f^y(n):\, n\in\omega\}.
  \end{equation}
  Also in $L'$, we fix some $\delta\geq \om1^V$ bigger than
  every ordinal in $\{x\}\cup \trclos(x)$ and bigger than
  $\dom(y)$ for every $y\in\fclosure(\mpar)\cup\{\mpar\}$.

  We can assume that $S$ contains $\ZFCx$ as well as the sentence
  ``$c^\mpar$ is a nep-parameter''. 
  We use (in $L'$) the following fact:
  \begin{quote}
    Let $S$ be a theory of the countable (first-order) language $\cL_S$.  Then
    there is a theory $S'$ (of a countable language $\cL_{S'}\supset \cL_S$) 
    such that the
    deductive closure of $S'$ is a conservative extension of $S$, and every
    sentence in $S'$ has the form $(\forall x_1)(\forall x_2)\dots (\forall x_n)
    (\exists y)\, \psi(x_1,\dots,x_n,y)$ for some quantifier free
    formula $\psi$ (using new relation symbols of $S'$).
  \end{quote}
  
  So we fix $S'$ and $\cL'$,
  consisting of relation
  symbols $R_i$ ($i\in\omega$) of arity $r_i\geq 1$,
  and constant symbols $c_i$ ($i\in\omega$).
  We can assume that there are constant symbols 
  for $\omega$ and for each natural number.
  We can further assume
  \begin{itemize}
    \item $c_0=c^x$, $c_1=c^\mpar$, 
    \item $R_0=R^\epsilon(x,y)$ expresses $x\in y$,
    \item $R_1=R^{\dom}(x,y)$ expresses ``$x$ is a function and $\dom(x)=y$'',
    \item $R_2=R^{\fclosure}(x)$ expresses $x\in\fclosure(c^\mpar)\cup\{c^\mpar\}$,
    \item $R_3=R^{\ON}(x)$ expresses $x\in\ON$.
  \end{itemize}

  We set $\cL'_i=\{R_0\dots R_{i-1},c_0\dots c_{i-1}\}$.
  and fix an enumeration 
  $(\varphi_i)_{i\in\omega}$ of all sentences in $S'$ such that
  $\varphi_i$ is a $\cL'_i$-sentence.
  We now define the partial order $T$ as follows:
  A node $t\in T$
  consists of the natural number 
  $n^t$, the sequences
  $(c_i^t)_{i\leq n^t}$ and $(R_i^t)_{i\leq n^t}$,
  and the following functions with domain $n^t$:
  $\textrm{ord-val}^t$, $x\textrm{-val}^t$,
  $\mpar\textrm{-val}^t$, and $\textrm{rk}^t$
  such that the following is satisfied:
  \begin{itemize}
    \item $n^t\geq 4$.
       We interpret $n^t=\{0,\dots,n^t-1\}$ to be the universe of the 
       following $\cL'_{n^t}$-structure:
       $c_i^t\in n^t$ is the $t$-interpretation of $c_i$
       and $R_i^t\subseteq (n^t)^{r_i}$ is the $t$-interpretation of $R_i$
       for all $i<n^t$.
    \item $\textrm{ord-val}^t: n^t\to  \delta\cup\{\textrm{na}\}$.
          If $c_i$ is the constant symbol for some $m\leq \omega$,
          then $\textrm{ord-val}^t(c_i^t)=m$. If
          $\textrm{ord-val}^t(a)\neq \textrm{na}$, then we have the 
          following: ${R^\ON}^t(a)$
          holds, and
          ${R^\in}^t(b,a)$ holds iff
          $\textrm{ord-val}^t(b)\in \textrm{ord-val}^t(a)$. (Where we
          use the notation that $\textrm{na}\notin y$ for all $y$.)
    \item $x\textrm{-val}^t: n^t\to  \{x\}\cup \trclos(x)\cup\{\textrm{na}\}$
          such that $x\textrm{-val}^t({c^x}^t)=x$.
          If $x\textrm{-val}^t(a)\notin \ON\cup \{\textrm{na}\}$, then
          ${R^\in}^t(b,a)$ iff
          $x\textrm{-val}^t(b)\in x\textrm{-val}^t(a)$.
          If $x\textrm{-val}^t(a)\in\ON$, then $x\textrm{-val}^t(a)=
          \textrm{ord-val}^t(a)$.
    \item $\mpar\textrm{-val}^t: n^t\to  \{\mpar\}\cup \fclosure(\mpar)
          \cup\{\textrm{na}\}$ such that
          $\mpar\textrm{-val}^t({c^\mpar}^t)=\mpar$ and
          $\mpar\textrm{-val}^t(a)\neq \textrm{na}$ iff ${R^{\fclosure}}^t(a)$.
          If ${R^{\fclosure}}^t(a)$ and ${R^{\dom}}^t(a,b)$, then
          $\textrm{ord-val}^t(b)=\dom(\mpar\textrm{-val}^t(a))$.
    \item $\textrm{rk}^t: n^t\to \delta$
      is a rank-function. I.e., if ${R^\in}^t(a,b)$, then
      $\textrm{rk}^t(a)<\textrm{rk}^t(b)$.
  \end{itemize}

  We set $t\geq_T t'$ if 
  \begin{itemize}
  \item $n^{t'}\geq n^t$, and all the interpretations and functions
      in $t'$ are extensions of the ones in $t$.
      (So we will omit the indices $t$ and $t'$.)
  \item If $i\leq n^t$, and $\varphi_i\in S'$ is the sentence
      $(\forall x_1)\dots(\forall x_l)(\exists y)
      \psi(\vec{x},y)$,
      then for all $\vec a$ in $n^t$ there is a $b\in n^{t'}$ such that
      $t'\vDash \psi(\vec a,b)$. 
  \item Assume that $i<n^t$, $a<n^t$ and $x\textrm{-val}(a)=y\notin
      \ON\cap\{ \textrm{na}\}$.
      Then there is a $b<n_{t'}$ such that $x\textrm{-val}(b)=f^y(i)$,
      cf.~\eqref{eq:myenum1}.
  \end{itemize}

  Then we get the following:
  \begin{itemize}
    \item $T$ is a partial order.
    \item The definition of $T$ can be spelled out
      in $L'$, the definition is absolute,
      and every node of $T$ is element of $L'$. So $T$ is element of $L'$.
    \item In particular $T$ has an infinite descending chain
      in $L'$ iff $T$ has one in $V$.
    \item $T$ has an infinite descending chain iff there is a good
      candidate.
  \end{itemize}
  Let us show just the last item:
  Clearly, a suitable candidate defines an infinite descending 
  chain:
  Given $M$, we can extend it to an $S'$-model (since $S'$ is 
  a conservative extension of $S$) and find a rank function $\rk$ for $M$.
  Then we can construct a chain as a subset of those 
  nodes $t\in T$ that correspond to finite subsets of $M$:
  To every such $t$ we just have to put 
  enough elements
  into $t'$ to witness the requirements.

  On the other hand, a chain defines a candidate:
  The union of the structures in the chain is
  a $\cL'$-structure $M'$ and 
  an $S'$-model. The function $\rk$ defines
  a rank on $M'$. So we can define by induction on this rank
  a function $i:M'\to V$ the following way:
  \[
    i(x)=
    \begin{cases}
      \textrm{ord-val}(a)&\text{if }\textrm{ord-val}(x)\neq \textrm{na}\\
      \{i(y):\, y\in x\}&\text{otherwise.}
    \end{cases}
  \]
  We set $M'=i[M]$. 
  By induction, $i$ is an isomorphism between
  $(M',R^\in,R^\ON,x^{M'},\mpar^{M'})$ and
  $(M,\in,\ON,x,\mpar_M)$, i.e., that
  $M$ is the required good candidate.
\end{proof}

\begin{Rem}
\begin{itemize}
\item
If $\mpar$ is a real, then the transitive collapse of a candidate still is a
candidate. So if $x$ is a real and $S$ as above, the existence of an
appropriate candidate is equivalent to the existence of a transitive candidate,
which is a $\bS12$ statement (in the parameters $\mpar,x,S$).
\item

  There is also
  a notion of non-wellfounded non elementary (nw-nep) forcing, cf.~\cite{MR2151390},
  where candidates do not have to be wellfounded. Then the existence
  of a candidate (with a real parameter) is even a $\bS11$-statement.
\end{itemize}
\end{Rem}


\subsection{Non elementary proper forcing}

We investigate forcing notions $Q$ defined with a nep-parameter $\mpar$:
$Q=\{x:\, \inQ(x,\mpar)\}$.  If $M$ is a $(\ZFCx,\mpar)$-candidate, we assume
that in $M$ the class $\{x:\, \inQ(x,p_M)\}$ is a set, which we will denote by
$Q^M$.  Generally such a $Q^M$ does not have to be a subset of $M$, but to
simplify notation (as in Assumption~\ref{asm:notation1}) we assume that $Q$ is
disjoint to $\ON$ (we can assume  that this requirement is explicitly stated in
the formula $\inQ$).
Then $Q^M\subset M$.
Analogously, we assume that $q\leq p$ iff $\leqQ(q,p,\mpar)$,
and that in $M$, $\{(p,q):\, \leqQ(q,p,\mpar_M)\}$ is a partial order on $Q^M$.
We write $q\leq^M p$ for $M\vDash \leqQ(q,p,\mpar_M)$. Additionally we require
that these formulas are upwards absolute. To summarize:

\begin{Def}\label{def:Qabsdefined}
  \begin{itemize}
    \item
      $M_1$ is a candidate in $M_2$ means the following:
      $M_1$ is a candidate, $M_2$ is either a candidate or
      $M_2=V$, $M_1\in M_2$, and $M_2$ knows that $M_1$ is countable.
    \item
      $\varphi(x)$ is upwards absolute for candidates means:
      If $M_1$ is a candidate in $M_2$, $a\in M_1$, and
      $M_1\vDash \varphi(a)$, then $M_2\vDash \varphi(a)$.
    \item
      A forcing $Q$ is upwards absolutely defined by the 
      nep-parameter $\mpar$, if the following is satisfied:
      \\
      In $V$ and all $(\ZFCx,\mpar)$-candidates $M$,
      $\inQ$ defines a set and
      $\leqQ$ defines a semi partial order on this set, and
      $\inQ$ and $\leqQ$ are upwards absolute for candidates.
  \end{itemize}
\end{Def}

As usual, we define:
\begin{Def}
  $q\in Q$ is $Q$-generic over $M$ (or just: $M$-generic), if
  $q$ forces that (the $V$-generic filter) $G_Q$ is $Q^M$-generic
  over $M$.
\end{Def}
Recall that ``$G$ is $M$-generic'' is defined in \ref{def:MQgen}.
Of course, $G_Q$ will generally not be a subset of $Q^M$.

Note that ``$p\in Q$'', ``$q\leq p$'' and therefore ``$p\comp q$'' are upward
absolute, but $\incomp$ is not. (It will be absolute in most simple
examples of nep-forcing, but typically not in nep-iterations or similar
constructions using nep forcings as building blocks).
This effect is specific for nep forcing, it appears neither in proper forcing
(since for $N\esm H(\chi)$, incompatibility always is absolute), nor in Suslin
proper (since the absoluteness of incompatibility is part of the definition).

Since $\incomp$ is not absolute, ``$q$ is $M$-generic'' is generally 
{\em not} equivalent to ``$q$ forces that all dense $D$ in $M$ meet $G$''. (The
$V$-generic $G$ is not necessarily a $Q^M$-filter.) 

Now we can finally define:
\begin{Def}\label{def:nep} 
  $Q$ is a non elementary proper (nep) forcing for $(\ZFCx,\mpar)$,
  defined by formulas $\inQ(x,\mpar)$, $\leqQ(x,y,\mpar)$, if
  \begin{itemize}
    \item $Q$ is upwards absolutely defined for $(\ZFCx,\mpar)$-candidates, and
    \item for all $(\ZFCx,\mpar)$-candidates $M$ and for all $p\in Q^M$ there is
      an $M$-generic $q\leq p$.
  \end{itemize}
\end{Def}

Sometimes we will denote the $\mpar$ and $\ZFCx$ belonging to $Q$ by $\mpar_Q$
and $\ZFCx_Q$ and denote a  $(\ZFCx_Q,\mpar_Q)$-candidate by
``$Q$-candidate''.

We will only be interested in normal forcings:

\begin{Def}\label{def:qnormal}
  A nep-definition $Q$ is normal, if
  \begin{itemize}
    \item $\ZFCx$ is normal (cf.\ \ref{def:zfcnormal}),
    \item $Q\subseteq \hco$ in $V$ and in all candidates (cf.~\ref{def:hco}),
    \item ``$p\in Q$'' and ``$q\leq p$'' are absolute between $V$ and $H(\chi)$
      (for sufficiently large regular $\chi$).
  \end{itemize}
\end{Def}

If $\ZFCx$ is normal, then the ord-collapse collapse of any $N\esm H(\chi)$
containing $\mpar$ is a candidate. So we get:

\begin{Lem}
  If $Q$ is normal, then for any $p\in Q$ there is a candidate $M$ such 
  that $q\in Q^M$.
  If $Q$ is normal and nep, then $Q$ is proper.
\end{Lem}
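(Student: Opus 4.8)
The plan is to handle both assertions (reading the first one, with the evident typo, as ``there is a candidate $M$ with $p\in Q^M$'') by the same device: take an elementary submodel $N\esm H(\chi)$, form its ord-collapse $M=\ordcol(N)$, which is a $(\ZFCx,\mpar)$-candidate by Lemma~\ref{lem:basiccandidates}(1) because $\ZFCx$ is normal, and then transfer information between $N$ and $M$ using the $\in$-isomorphism $i\colon N\to M$ of Fact~\ref{lem:ordcol}. The normality clause $Q\subseteq\hco$ is what makes this transfer clean: every condition lies in $\hco$, and an ord-collapse of an elementary submodel fixes each element of $\hco\cap N$ (this is the last bullet of the Lemma preceding Subsection~\ref{ss:oracle}, and follows by a rank-induction once one notes $\ordclos(p)\subseteq N$ for $p\in\hco\cap N$ by elementarity). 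For the first assertion I would fix $p\in Q$, choose a regular $\chi$ large enough that $p,\mpar\in H(\chi)$, that $H(\chi)\vDash\ZFCx$, and that ``$x\in Q$'' is absolute between $V$ and $H(\chi)$ (all guaranteed by normality, Definitions~\ref{def:zfcnormal} and~\ref{def:qnormal}), so that $H(\chi)\vDash\inQ(p,\mpar)$. Picking a countable $N\esm H(\chi)$ with $p,\mpar\in N$ and collapsing, $i$ fixes $p$ (as $p\in\hco\cap N$) and carries $\mpar$ to $\mpar_M$, so $N\vDash\inQ(p,\mpar)$ transports to $M\vDash\inQ(p,\mpar_M)$, i.e.\ $p\in Q^M$.

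For the second assertion I would use the standard criterion for properness: it suffices to show that for all sufficiently large regular $\chi$, every countable $N\esm H(\chi)$ with $\mpar\in N$, and every $p\in Q\cap N$, there is a $q\le p$ that is $(N,Q)$-generic, meaning $q$ forces $G_Q\cap D\cap N\neq\emptyset$ for every $D\in N$ dense in $Q$. Fixing such $N$ and $p$ and collapsing to the candidate $M$ as above, I again have $i(p)=p\in Q^M$; since $Q$ is nep there is an $M$-generic $q\le p$, i.e.\ $q$ forces that $G_Q$ is $Q^M$-generic over $M$ and hence meets every $D'\in M$ that $M$ regards as dense in $Q^M$. Given a dense $D\in N$ of $Q$, elementarity gives $N\vDash\,\qb D\text{ is dense in }Q\qe$, which the isomorphism $i$ turns into $M\vDash\,\qb i(D)\text{ is dense in }Q^M\qe$; and because every member of $D$ lies in $Q\subseteq\hco$ and is fixed by $i$, one gets $i(D)=\{i(d):d\in D\cap N\}=D\cap N$. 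Applying $M$-genericity to the dense set $i(D)\in M$ then yields that $q$ forces $G_Q\cap(D\cap N)\neq\emptyset$, which is exactly $(N,Q)$-genericity; as the $N$ with $\mpar\in N$ are club-many, $Q$ is proper.

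The main obstacle is precisely this last transfer between ``$M$-generic for the collapsed candidate $M$'' and ``$(N,Q)$-generic for the elementary submodel $N$'': it works only because $Q\subseteq\hco$ forces $i$ to fix all conditions, so that the $M$-dense set $i(D)$ is literally $D\cap N$ and meeting it means meeting $D\cap N$. I would also be careful to phrase genericity throughout in terms of dense sets rather than maximal antichains, in order to sidestep the non-absoluteness of $\incomp$ flagged after Definition~\ref{def:MQgen} (recall that the $V$-generic $G_Q$ need not even be a $Q^M$-filter, so only the ``meets every dense $D\in M$'' half of $M$-genericity is directly usable here). A minor additional point to verify is that choosing $\chi$ and $N$ to contain $\mpar$ costs nothing, since such submodels are club-many and thus suffice for properness.
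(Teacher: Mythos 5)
Your proposal is correct and follows exactly the paper's (very terse) argument: the paper's proof just cites Lemma~\ref{lem:basiccandidates}(1) together with the fact that properness may be tested on elementary submodels containing a fixed parameter, and your write-up fills in precisely the intended details --- ord-collapsing $N\esm H(\chi)$ to a candidate $M$, using normality ($Q\subseteq\hco$, absoluteness between $V$ and $H(\chi)$) so that the collapse fixes all conditions (the paper's lemma ``if $P\subseteq\hco$ then $i$ is the identity on $P$''), and transferring $M$-genericity back to $(N,Q)$-genericity via $i(D)=D\cap N$. Your care in phrasing genericity through dense sets, avoiding the non-absolute $\incomp$, matches the paper's own Definition~\ref{def:MQgen}.
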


\begin{proof}
  This follows directly from Lemma~\ref{lem:basiccandidates} (and the fact
  that in the definition of proper one can assume that
  the elementary submodels contain an arbitrary fixed parameter,
  see e.g. \cite[Def.\ 3.7]{MR1234283}).
\end{proof}

As already mentioned, we are only interested in normal forcings, and we will
later tacitly assume normality whenever we say a forcing is nep.

\begin{Rem}
  However, it might sometimes make sense to investigate non-normal nep
  forcings. Of course such forcings do not have to be proper.
  An example can be found in~\cite[1.19]{MR2115943}:
  We assume CH in $V$, and define a forcing $Q$
  for which we get generic conditions not for all $\ZFC^-$ models,
  but for all models of $2^{\al0}=\al2$. This forcing can collapse
  $\al1$.
\end{Rem}

\subsection{Some simple properties}

Shoenfield absoluteness \ref{lem:shoenfieldlevy}
immediately gives us many simple cases of 
absoluteness. We just give an example:
If $Q$ is upward absolutely defined and normal, then $q\leq p$ is equivalent to
``there is a candidate $M$ thinking that $q\leq p$''. 
So in particular:

\begin{Cor} 
  Assume that $V'$ is an extension of $V$ with the same ordinals, and that
  $Q$ is (normal) nep in $V$ as well as in $V'$.
  Then $p\in Q$, $q\leq p$ and $p\comp q$ are
  absolute between $V$ and $V'$. (But ``$A$ is a maximal antichain'' is 
  only downwards absolute from $V'$ to $V$.) 
\end{Cor}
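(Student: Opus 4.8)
The plan is to prove the corollary by reducing each statement to the existence of a candidate, so that the "same ordinals" hypothesis does the work. The key observation, stated in the paragraph just before the corollary, is that for a normal upwards-absolutely-defined $Q$, the relation $q\leq p$ is equivalent to ``there exists a candidate $M$ with $q,p\in Q^M$ and $q\leq^M p$''. This equivalence follows from Shoenfield absoluteness (Lemma~\ref{lem:shoenfieldlevy}) together with normality: in one direction, normality guarantees a candidate via the ord-collapse of an $N\esm H(\chi)$ containing $\mpar$ and the relevant witnesses; in the other direction, upward absoluteness pushes $q\leq^M p$ up to $V$.

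First I would handle $p\in Q$ and $q\leq p$. By the displayed equivalence, each is equivalent to the existence of a candidate $M$ satisfying an appropriate first-order theory $S$ (in the language $\{\in,c^x,c^\mpar\}$) with the witnesses $p,q$ coded as the parameter $x\in\hco$. The crucial input is the last clause of Lemma~\ref{lem:basiccandidates}: the existence of such a candidate is \emph{absolute between universes containing $\om1^V$}. Since $V'$ extends $V$ with the same ordinals, we have $\om1^{V'}\ge\om1^V$, and both $V$ and $V'$ contain $\om1^V$, so the candidate-existence statement transfers in both directions. Hence $p\in Q$ and $q\leq p$ are absolute between $V$ and $V'$. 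The compatibility statement $p\comp q$ is itself expressible as ``$\exists r\,(r\leq p\wedge r\leq q\wedge r\in Q)$'', which is again a candidate-existence statement of the same form, so the same argument applies; alternatively one notes that $\comp$ is upward absolute (as remarked before Definition~\ref{def:nep}) and runs the candidate argument in both directions.

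For the parenthetical remark about maximal antichains, I would argue only downward absoluteness. If $A$ is a maximal antichain in $V'$, then in particular every element of $Q^{V}\subseteq Q^{V'}$ is compatible (in $V'$, hence by the absoluteness just proved also in $V$) with some element of $A$; and the elements of $A$ remain pairwise incompatible in $V$ because incompatibility is downward absolute (a witness to compatibility in $V$ would be a witness in $V'$ by upward absoluteness of $\comp$). Thus $A$ is a maximal antichain in $V$. The failure of upward absoluteness is exactly the familiar phenomenon that $V'$ may contain a new condition compatible with everything in $A$, so maximality can be destroyed by passing to the larger universe.

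The main obstacle is making the reduction to Lemma~\ref{lem:basiccandidates}(4) fully precise: one must check that the theory $S$ encoding ``$q\leq^M p$'' (or ``$r\leq p$ and $r\leq q$'' for the compatibility case) is genuinely first-order in the specified language, that $p,q$ (and $r$) are legitimately coded into a single parameter $x\in\hco$, and that normality supplies the candidate in the forward direction while upward absolutely-definedness supplies the transfer to the ambient universe. Once the displayed pre-corollary equivalence is granted — which the text asserts follows immediately from Shoenfield absoluteness — the rest is a routine application of the absoluteness of candidate-existence, with the ``same ordinals'' hypothesis ensuring $\om1^V$ lies in both universes.
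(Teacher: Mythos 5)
Your proposal is correct and follows essentially the same route as the paper, which derives the corollary exactly from the pre-corollary equivalence (``$q\leq p$ iff some candidate thinks $q\leq p$'', and likewise for $\in Q$ and $\comp$) together with the absoluteness of candidate-existence from Lemma~\ref{lem:shoenfieldlevy} and Lemma~\ref{lem:basiccandidates}(4), the same-ordinals hypothesis guaranteeing that the relevant $\om1$ lies in both universes. Your downward-absoluteness argument for maximal antichains via the just-established absoluteness of $\comp$ is precisely the intended reading of the parenthetical remark, so nothing is missing.
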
 

The basic theorem of forcing can be formulated as: For a transitive
countable model $M$ and $P$ in $M$
\begin{multline}\label{eq:gut24}
  \left[M\vDash p\forc \varphi(\ntau)\right]\text{ iff } \\
  \left[M[G]\vDash\varphi(\ntau[G])\text{ for every
  $M$-generic filter $G\in V$ containing $p$}\right].
\end{multline}
(And there always is at least one $M$-generic filter $G\in V$ containing $p$.) 

By~\ref{cor:pforc} we get the following: 
\begin{equation}
  \text{If $M$ is a countable, ord-transitive
  model and $P\in M$, then~\eqref{eq:gut24} holds.}
\end{equation}

With the usual abuse of notation, the essential property of proper forcing
can be formulated as follows: If $M$ is an elementary
submodel of $H(\chi)$ and $Q$ in $M$ is proper, then
\begin{multline}\label{eq:gut24r5}
  \left[M\vDash p\forc \varphi(\ntau)\right]\text{ iff }
  \\
  \left[M[G]\vDash\varphi(\ntau[G])\text{ for every
  $M$- and $V$-generic filter $G$ containing $p$}\right].
\end{multline}
(And there always is at least one $M$- and $V$-generic filter  
containing $p$.)

For nep forcings we get exactly the same:
\begin{equation}
  \text{If $Q$ is nep
    and $M$ a $Q$-candidate, then~\eqref{eq:gut24r5} holds.}
\end{equation}

If $M_1$ is a candidate in $M_2$, and $q$ is $Q$-generic over $M_1$, then $q$
does not have to be generic over $M_2$ (since $M_2$ can see more dense sets).
Of course, the other direction also fails:
If $q$ is $M_2$-generic, then generally it is not $M_1$-generic 
(corresponding to the fact in non-ccc proper forcing that not every $V$-generic
filter has to be $N$-generic): $M_1$ could think that $D$ is predense, but
$M_2$
could know that $D$ is not, or $M_1$ could think that $p_1\incomp p_2$, but
$M_2$ sees that $p_1\comp p_2$.  Even for very simple $Q$  satisfying that
$\incomp$ is absolute ``$\{p_i:\, i\in\omega\}$ is a maximal antichain'' need
not be upwards absolute (in contrast to Suslin proper forcing, see example
\ref{ex:ctblnotabs}).

\section{Examples}\label{sec:examples}

There are oodles of examples nep forcings. Actually:
\begin{ruleofthumb}\label{ruleofthumb}
  Every nicely definable forcing notion that can be proven to be proper
  is actually nep.
\end{ruleofthumb}
This rule does not seem to be quite true. A very partial potential 
counterexample is~\ref{ex:cccnotnep}.
However, the rule seems to hold in most cases, and becomes even truer
if the proof of properness uses some form of pure decision and fusion, e.g.,
for $\sigma$-closed or
Axiom A.
(And in these cases, the proof of the nep property is just a trivial
modification of the proof of properness.)

Overview of this section:
\begin{itemize}
  \item Transitive nep forcing: The forcings is a set of reals,
    the definition uses only a real parameter. In this case
    it is enough to consider transitive candidates.
    \begin{enumerate}
      \item[\ref{ss:suslin}] Suslin proper and Suslin$^+$.
      \item[\ref{ss:simplecreatures}] A specific example from the 
        theory of creature forcing.
    \end{enumerate}
  \item Non-transitive nep: The forcings are not subsets of $H(\al1)$,
    and we have to use non-transitive candidates.
    \begin{enumerate}
      \item[\ref{ss:sigma}] Trival examples: $\sigma$-closed forcings.
      \item[\ref{ss:complexcreatures}] 
          Products of creature forcings and similar constructions.
    \end{enumerate}
    Other examples of of non-transitive nep forcings are iterations of nep
    forcings.  We will investigate countable support
    iterations in the Section~\ref{sec:csi}.
  \item Additional topics:
    \begin{enumerate}
      \item[\ref{ss:zap}] 
        Nep,
        creature forcing, and Zapletal's idealized forcing.
      \item[\ref{ss:nonnep}] Counterexamples: forcings that are 
        not nep.
    \end{enumerate}
\end{itemize}

\subsection{Suslin proper forcing}\label{ss:suslin}

Assume that $Q\subseteq \omega\ho$ is defined using a real parameter $\mpar$.

In this case it is enough to consider transitive candidates: Such a candidate
is just a countable transitive model of  $\ZFCx$ containing $\mpar$.\footnote{More
specifically, the straightforward proof shows that in this case 
``$Q$ is nep'' --- i.e.\ ``nep with respect to all
ord-transitive models'' --- is equivalent to: ``$Q$ is nep
with respect to all transitive models''.}

The first notion of this kind was the following:
\begin{Def}\label{def:suslin}
  A (definition of a) forcing $Q$ is Suslin in the real parameter $\mpar$, if
  $p\in Q$, $q\leq p$ and $p\incomp q$ are $\lS11(\mpar)$.
\end{Def}
For Suslin forcings, the nep property is called ``Suslin proper'':
\begin{Def}\label{def:suslinproper}
  \begin{itemize}
    \item $Q$ is Suslin proper, if $Q$ is Suslin and nep. I.e.,
      for every (transitive) candidate $M$ and every $p\in Q^M$ there is an
      $M$-generic $q\leq p$.
    \item $Q$ is Suslin ccc, if $Q$ is Suslin and ccc.
  \end{itemize}
\end{Def}

Suslin ccc implies Suslin proper (in a very strong and absolute way,
cf.~\cite{MR973109}). It
seems unlikely that Suslin plus proper implies Suslin proper, but we do not
have a counterexample.  Cohen, random,
Hechler and Amoeba forcing are Suslin ccc.  Mathias forcing is Suslin proper.

Some forcings are not Suslin proper just because incompatibility is not Borel,
for example Sacks forcing.  This motivated a generalization of Suslin proper,
Suslin$^+$ \cite[p. 357]{MR1234283}.
It is easy to see that every Suslin$^+$ forcing is nep as well, and that many
popular tree-like forcings are Suslin$^+$, e.g., Laver, Sacks and Miller
\cite{MR2252247}.

\subsection{An example of a creature forcing}\label{ss:simplecreatures}

A more general framework for definable forcings is creature forcing, presented
in the monograph \cite{MR1613600} by Ros{\l}anowski and Shelah.  They introduce
many ways to build basic forcings out of creatures, and use such basic forcings
in constructions such as products or iterations.

Typically, the creatures are finite and the basic creature forcing consist of
$\omega$-sequences (or similar hereditarily countable objects made) of
creatures. The proofs that such forcings are proper actually give nep. We
demonstrate this
effect on a specific example (that will also be used in 
Subsection~\ref{ss:zap}).
This specific example is in fact Suslin proper, but other simple (and
similarly defined) creature forcing notions are nep but not Suslin proper.

We fix a sufficiently fast growing\footnote{It is enough to assume
$\mathbf F(i)>2^{i^{(k^*_i)}}$.} 
function $\mathbf F: \omega\to\omega$
and set 
\begin{equation}\label{eq:kstern}
  k^*_i:=\prod_{j<i}\mathbf F(j).
\end{equation}

\begin{Def}\label{def:norm}
  An $i$-creature is a function $\phi:\pow(a)\to \omega$ such that
  \begin{itemize}
    \item
      $a\subseteq \mathbf F(i)$ is nonempty.
    \item  $\phi$ is monotonic, i.e.,
      $b\subset c\subseteq a$ implies $\phi(b)\leq \phi(c)$.
    \item  $\phi$ has bigness, i.e., $\phi(b\cup c)\leq \max(\phi(b),\phi(c))+1$
      for all $b, c\subseteq a$.
    \item $\phi(\emptyset)=0$ and $\phi(\{x\})\leq 1$ for all $x\in a$.
  \end{itemize}
   We set $\val(\phi)\DEFEQ a$, $\nor(\phi)\DEFEQ \phi(a)$, and we
   call $\phi_1$ stronger than $\phi_0$, or: $\phi_1\leq \phi_0$, if
   $\val(\phi_1)\subseteq \val(\phi_0)$ and
   $\phi_1(b)\leq \phi_0(b)$ for all $b\subseteq \val(\phi_1)$.
\end{Def}

For every $\phi$ and $x\in\val(\phi)$ there is a stronger creature $\phi'$ with
domain $\{x\}$. For each $i$, there are only finitely many $i$-creatures.

Another way to write bigness is: 
\begin{equation}\label{eq:big2}
  \text{If }b= c_1 \cup c_2\subseteq a\text{ then either }
  \phi(c_1)\geq \phi(b)-1\text{ or }\phi(c_2)\geq \phi(b)-1.
\end{equation}

\begin{Def}\label{def:creex}
  A condition $p$ of $P$ is a sequence
  $(p(i))_{i\in\omega}$ such that 
  $p(i)$ is an $i$-creature and
  $\liminf_{i\to\infty} \sqrt[\uproot{4}\leftroot{-3}k^*_i]{\nor(p(i))}=\infty$.
  A condition $q$ is stronger than $p$, if 
  $q(i)$ is stronger than $p(i)$ for all $i$.
\end{Def}

Given a $p\in P$, we can define the trunk of $p$ as follows: Let $l$ be maximal
such that  $\val(p(i))$ is a singleton $\{x_i\}$ for all $i<l$. Then the trunk
is the sequence $(x_i)_{i<l}$.

We define the $P$-name $\n\eta$ to be the union of all trunks of conditions in
the generic filter.  For every $n$, the set of conditions with trunk of length
at least $n$ is (open) dense. If $q\leq p$ then the trunk of $q$ extends the
trunk of $p$. So $\n\eta$ is the name of a real, more specifically
$\n\eta\in\prod_{i\in\omega} \mathbf F(i)$.

$P$ is nonempty: For example, the following is a valid condition: 
$\val(p(n))=\mathbf F(n)$, and $p(n)(b)=\lfloor \log_2(|b|) \rfloor$.

\begin{Lem}
  $P$ satisfies fusion and pure decision, so
  $P$ is $\omega^\omega$-bounding and nep (and in particular proper).
\end{Lem}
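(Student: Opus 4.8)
The plan is to establish the two combinatorial properties—fusion and pure decision—and then invoke the standard machinery: fusion gives properness and the $\omega^\omega$-bounding property, while pure decision together with fusion gives nep by the rule of thumb that such Axiom~A--style proofs ``relativize'' to arbitrary candidates.

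First I would set up \emph{fusion}. For a condition $p\in P$ and $n\in\omega$, declare $q\leq_n p$ if $q\leq p$ and, for all $i$ below the point where the norms of $p$ first exceed $n$, we have $q(i)=p(i)$ (and the trunk is not extended on that initial segment). A fusion sequence is a $\leq_n$-decreasing chain $p_0\geq_0 p_1\geq_1 p_2\geq_2\cdots$; its limit $q$ is defined coordinatewise by $q(i)=\lim_n p_n(i)$, which stabilizes on each coordinate because only finitely many coordinates change between successive steps. The key point is that $\liminf_i\sqrt[\uproot{4}\leftroot{-3}k^*_i]{\nor(q(i))}=\infty$ is preserved: at stage $n$ we only freeze the coordinates whose norm is already large, so the surviving norms can be kept growing. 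This is the routine but slightly fiddly norm-bookkeeping, and it is where I would be most careful.

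Next I would prove \emph{pure decision}: given $p\in P$ and a statement $\varphi$ about the generic real (e.g.\ membership in a dense set, or a value of a name for an ordinal), there is a $q\leq_0 p$ (so $q$ has the same trunk as $p$) deciding $\varphi$. The mechanism is \emph{bigness}, in the reformulation \eqref{eq:big2}: repeatedly splitting $\val(p(i))$ into the ``$\varphi$-yes'' and ``$\varphi$-no'' halves, bigness guarantees that one half loses at most one in norm, so after shrinking we can steer the generic into a homogeneous side while paying only a bounded norm-cost per coordinate; summed appropriately against the $k^*_i$-th root this cost is absorbed and the $\liminf$ condition survives. Combining pure decision with fusion in the standard way yields, for any dense $D$, a $q\leq p$ below which the length of the trunk determines membership in $D$, which simultaneously gives properness and, since trunks are finite and the real $\n\eta$ is read continuously off the trunk, the $\omega^\omega$-bounding property.

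Finally, for \emph{nep} I would check that the whole argument is absolute for candidates. Since $\mathbf F$ and the $k^*_i$ are recursive parameters, ``$\phi$ is an $i$-creature,'' ``$q\leq p$,'' and the $\liminf$-condition are all upward absolute for ord-transitive (successor-absolute) candidates; in particular $\inQ$ and $\leqQ$ are absolutely defined in the sense of Definition~\ref{def:Qabsdefined}. Given a candidate $M$ and $p\in Q^M$, I would run the fusion against an enumeration \emph{in $M$} of the dense sets of $Q^M$, producing a condition $q\leq p$ in $V$ whose trunk-determined decisions meet every $D\in M$; this is exactly an $M$-generic condition below $p$. The main obstacle here is conceptual rather than technical: one must observe that each finite pure-decision step is a genuinely finitary, absolute operation (only finitely many $i$-creatures exist for each $i$), so that the construction carried out using the countably many dense sets listed in $M$ genuinely produces a single real $q\in V$ that is $M$-generic. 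This is precisely the ``trivial modification of the proof of properness'' promised by Rule of Thumb~\ref{ruleofthumb}, and completes the verification that $P$ is nep.
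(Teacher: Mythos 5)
Your fusion set-up and your nep argument are essentially the paper's (freeze an initial segment, keep surviving norms large; enumerate the dense sets of the candidate $M$ inside $M$, carry out pure decision in $M$, fuse in $V$, and use that $\pos(p_m,n)$ is finite, hence a subset of $M$, and that $\{p_m\wedge s:\, s\in\pos(p_m,n)\}$ is predense in $V$ by absoluteness). But your proof of pure decision has a genuine gap: bigness alone cannot carry it. Bigness homogenizes a \emph{given} coloring $F:\val(\phi)\to 2$ at a cost of $1$ in norm, but the ``$\varphi$-yes/$\varphi$-no'' split at a position $s$ is not such a coloring: to attach a decision to $s$ you must strengthen $p\wedge s$ into the open dense set $D$, and the witnessing condition $r\le p\wedge s$ may have arbitrarily small norms on an arbitrarily long interval. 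That cost is not ``bounded per coordinate'' and is not absorbed by the $k^*_i$-th roots. Moreover, the finitely many positions in $\pos(p,h)$ all share the same upper creatures, so you cannot amalgamate these unboundedly destructive strengthenings the way one does for Sacks or Laver; your construction stalls exactly when no norm-preserving witness in $D$ exists below $p\wedge s$.

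This is precisely why the paper's proof uses \emph{halving} in addition to bigness. At each position $s_k$ one either finds a witness with controlled norms, $\nor(p^k_m(l))>(n+2m)^{k^*_l}/2^k$ (deciding case), or replaces the creatures by $\half(p^{k-1}_m(l))$ (halving case), reserving half the norm for later. A downward induction using bigness (the creatures $\phi_{n,h}$ and sets $\Lambda_{n,h}$) combined with a K\"onig's lemma compactness argument (there are only finitely many $h$-creatures for each $h$) then shows that every position eventually decides; the contradiction in the halving case is obtained by \emph{unhalving}: any $r\in D$ below the halved condition can be converted back into a condition with large norms that would have triggered the deciding case, contradicting the choice of the halving case. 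Without halving/unhalving (or an equivalent device) the pure-decision step --- and with it the whole lemma --- is not proved. Two smaller points: your $\le_n$ must also require that every \emph{changed} coordinate keeps root-norm above $n$ (as in the paper's $q\le_m p$), otherwise the fusion limit can fail the $\liminf$ condition, since under your definition infinitely many coordinates may change at every step; and pure decision in the paper is formulated for open dense $D$ via ``essentially in $D$'' with $q\le_n p$, which is the form actually consumed by the fusion in the nep argument.
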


\begin{proof}[Sketch of proof]
  This is an simple case of \cite[2.2]{MR2214118}. We give 
  an overview of the proof, which
  uses the creature-forcing concepts of bigness and halving:

  {\em Bigness:}
  Assume that $\phi$ is an $i$-creature with $\nor(\phi)>1$,
  and that $F:\val(\phi)\to 2$. Then there is a 
  $\psi\leq \phi$ such that $\nor(\psi)\geq \nor(\phi)-1$
  and such that $F\restriction \val(\psi)$ is constant.

  (This follows immediately from~\eqref{eq:big2}.)

  {\em Halving:}
  Let $\phi$ be an $i$-creature. Then there is an $i$-creature
  $\half(\phi)\leq \phi$ such that 
  \begin{itemize}
     \item $\nor(\half(\phi))\geq \lceil \nor(\phi)/2\rceil$.
     \item If $\psi\leq \half(\phi)$ and $\nor(\psi)>0$, then there is a
       $\psi'\leq \phi$ such that $\nor(\psi')\geq \lceil \nor(\phi)/2\rceil$ and
       $\val(\psi')\subseteq \val(\psi)$.
  \end{itemize}
  (Proof: Define $\half(\phi)$ by
  $\val(\half(\phi))=\val(\phi)$ and
  $\half(\phi)(b)=\max(0,\phi(b)-\lfloor \nor(\phi)/2 \rfloor)$.
  Given $\psi$ as above, we set $b:=\val(\psi)$ and define $\psi'$ by 
  $\val(\psi')=b$ and $\psi'(c)=\phi(c)$ for
  all $c\subseteq b$.  Then
  \[
    0<\nor(\psi)=\psi(b)\leq \half(\phi)(b)=\phi(b)-\lfloor \nor(\phi)/2
    \rfloor,
  \]
  so $\nor(\psi')=\phi(b)\geq \lceil \nor(\phi)/2 \rceil$.)

  {\em Fusion:} We define $q\leq_{m} p$ by: 
  $q\leq p$, $q\restriction m=p\restriction m$, and 
  for all $n\geq m$ either $q(n)$ is equal to $p(n)$
  or $\sqrt[\uproot{4}\leftroot{-3}k^*_n]{\nor(q(n))}>m$.
  If $(p_n)_{n\in \omega}$ is a sequence of conditions
  such that $p_{n+1}\leq_{n+1}p_n$, then there is
  a canonical limit $p_\infty<p_n$.

  Set $\pos(p,n)=\prod_{i<n}\val(p(n))$.
  For $s\in \pos(p,n)$, we construct $p\wedge s\leq p$
  by enlarging the stem of $p$ to be $s$
  (or, if the stem was larger than $n$ to begin with, 
  then the stem extends $s$ and we set $p\wedge s=p$).
  The set $\{p\wedge s:\, s\in\pos(p,n)\}$ is predense
  under $p$. Let $D$ be an open dense 
  set. We say that $p$ essentially is in $D$, if there is an
  $n\in\omega$ such that $p\wedge s\in D$ for all $s\in\pos(p,n)$.

  {\em Pure decision:}
  For $p\in P$, $n\in\omega$ and $D\subseteq P$ open dense there
  is a $q\leq_n p$ essentially in $D$.

  Then the rest follows by a standard argument:

  {\em Nep}: Note that $p\in P$ and $q\leq p$ and  $q\leq_k p$ are Borel
  (so $p\incomp q$ is absolute; actually $\incomp$ is Borel as 
  well, i.e., $P$ is Suslin proper). Fix a transitive model
  $M$ and a $p_0\in P^M$. 
  Enumerate all the dense sets in $M$
  as $D_1,D_2,\dots$. 
  Given $p_n\in M$, pick in $M$ some
  $p_{n+1}\leq_{n+1}p_n$ essentially in $D_{n+1}$.
  In $V$, build the limit $p_\omega\leq p_0$. Then $p_\omega$
  is $M$-generic: Let $G$ be a $P$-generic filter
  over $V$ containing $p_\omega$.
  Fix $m\in\omega$.  We have to show that $G\cap M$ meets $D_m$.
  Note that $G$ contains $p_m$ (since $p_\omega\leq p_m$).
  In $M$, there is an $n\in\omega$ such that 
  $p_m\wedge s\in D_m$ for all $s\in \pos(p_m,n)$.
  The definitions of $\pos(p_m,n)$ as well as $p_m\wedge s$
  are absolute between $M$ and $V$, the set $\pos(p_m,n)$ is
  finite (and therefore subset of $M$), the set
  $\{p_m\wedge s:\, s\in \pos(p_m,n)\}$ is predense (in $V$),
  so $p_m\wedge s\in G$ for some $s\in \pos(p_m,n)$.
  So we get $G\cap M\cap D_m\neq \emptyset$.

  {\em Continuous reading} of
  names and therefore {\em $\omega^\omega$-bounding}
  follows equally easily.

  It remains to show pure decision. Fix $p$, $n$ and $D$ and set $p_0=p$.
  Given $p_m$, we construct $p_{m+1}$ as follows:
  \begin{itemize}
    \item Choose 
\begin{equation}\label{eq:haem}
  h_m>n+m
       \text{ such that }
       \sqrt[\uproot{4}\leftroot{-3}k^*_l]{\nor(p_m(l))}>n+2m
       \text{ for all }
       l\geq h_m. 
\end{equation}
    \item Enumerate $\pos(p_m)$ as $s_1,\dots,s_M$.
      Note that $M\leq k^*_{h_m}$, according to~\eqref{eq:kstern}.
    \item Set $p_m^0=p_m$. Given $p_m^{k-1}$, pick
      $p_m^{k}$ such that
      \begin{itemize}
        \item $\nor(p_m^{k}(l))>(n+2m)^{k^*_l}/2^{k}$ 
          for all $l\geq h_m$.
        \item $p_m^{k}(l)=p_m(l)$ for $l<h_m$.
        \item Either $p_m^{k}\wedge s_{k}$ is essentially in $D$
          (deciding case),
          or it is not possible to find such a condition
          then $p_m^{k}(l)=\half(p_m^{k-1}(l))$ for all $l\geq h_m$
          (halving case).
      \end{itemize}
   \item Set $p_{m+1}$ to be $p_m^M$.
   In particular, $\sqrt[\uproot{4}\leftroot{-3}k^*_l]{\nor(p_{m+1}(l))}>(n+2m)/2$ for all $l\geq h_m$.
 \end{itemize}
 Let $p_\omega$ be the limit of all the $p_m$.
 For every $n\in\omega$ define 
 by downward induction on $h=n,n-1,\dots,h_0$ the $h$-creatures $\phi_{n,h}$
 and sets $\Lambda_{n,h}\subseteq \pos(p_\omega,h)$ in the following way:
 \begin{itemize}
   \item $\Lambda_{n,n}$ is the set of $s\in \pos(p_\omega,n)$
     such that $p_\omega\wedge s$ is essentially in $D$. 
   \item Assume $h_0\leq h<n$.
     So for all $s\in \pos(p_\omega,h)$
     some of the extensions in of $s$ to
     $\pos(p_\omega,h+1)$ will be in $\Lambda_{n,h+1}$
     while others will be not. By shrinking $p_\omega(h)$
     at most $k^*_h$ many times, each time using bigness, we can guarantee that
     the resulting $h$-creature $\phi_{n,h}$ satisfies:
     For all $s\in \pos(p_\omega,h)$ either all extension
     compatible with $\phi_{n,h}$ are in $\Lambda_{n,h+1}$ or
     no extension is. Set $\Lambda_{n,h}$ to be the set of
     those $s$ such that the extensions all are in $\Lambda_{n,h+1}$.
     Note that
     $\sqrt[\uproot{4}\leftroot{-3}k^*_h]{\phi_{n,h}}>1/2
     \sqrt[\uproot{4}\leftroot{-3}k^*_h]{p_\omega(h)}$.
  \end{itemize}
  For each $h$, there are only finitely many possibilities for
  $\Lambda_{n,h}$ and $\phi_{n,h}$, so
  using K\"onig's lemma, we can get a sequence 
  $(\phi_{*,h},\Lambda_{*,h})_{h_0\leq h<\omega}$ such that 
  for all $N$ there is an $n>N$ such that 
  \begin{equation}\label{eq:klwejt}
    (\phi_{*,h},\Lambda_{*,h})=(\phi_{n,h},\Lambda_{n,h})\text{ for all }
    h_0\leq h\leq N.
  \end{equation}

  We claim 
  \begin{equation}\label{eq:whjett}
    \Lambda_{*,h_0}= \pos(p_\omega,h_0)
  \end{equation}
  Then we choose any $n$ such that
  $\Lambda_{n,h_0}=\Lambda_{*,h_0}$ and 
  define $q$ by 
  \[
    q(l)=\begin{cases}
      p_\omega(l) & \text{if } l<h_0\text{ or }l\geq n\\
      \phi_{l,h} & \text{otherwise.}
    \end{cases}
  \]
  Then $q$ essentially is in $D$, according to~\eqref{eq:whjett}
  and the definition of $\Lambda_{n,h}$.

  So it remains to show~\eqref{eq:whjett}. Assume towards
  a contradiction that $s\in \pos(p_\omega )\setminus \Lambda_{*,h_0}$.
  Let $q'$ be the condition with stem $s$ and the creatures
  $(\phi_{*,h})_{h_0\leq h< \omega}$.
  Pick some $r\leq q'$ in $D$.

  Let $s'$ be the trunk of $r$. So $s'$ extends $s$. 
  Let $h$ be the length of $s'$.
  Without loss of generality, we can assume that
  \begin{equation}\label{eq:wiehfr}
    \sqrt[\uproot{4}\leftroot{-3}k^*_l]{\nor(r(l))}>2\text{ for all }l\geq h 
  \end{equation}
  and that $h=h_m$ for some $m$, where $h_m$ is the number picked
  in~\eqref{eq:haem} to construct $p_{m+1}$.
  In particular, $s'=s_k$ for some $k$, so
  \begin{equation}\label{eq:wiu7}
    r\leq p^k_m
  \end{equation}
  We know that $r\in D$. This implies that
  \begin{equation}\label{eq:wiu8}
    r'\DEFEQ p^k_m\wedge s_k\text{ essentially is in }D
    \text{ (and }r\leq r'\text{).}
  \end{equation}
  Assume otherwise. Then pick $H>h_m$ such that
  $\sqrt[\uproot{4}\leftroot{-3}k^*_l]{\nor(r(l))}>(n+2m)/2$
  for all $l\geq H$. For $h_m\leq l<H$, we can unhalve
  $r(l)$ to get some $\tilde r(l)$ with norm at least
  $\nor(p^{k-1}_m)/2> (n+2m)^{k^*_l}/2^k$.
  Then the condition consisting of trunk $s'$,
  the creatures $\tilde r(l)$ for  $h_m\leq l<H$
  and $r(l)$ for $l\geq H$ would be a suitable condition for
  the deciding case, a contradiction to the fact that we are
  in the halving case.
  This shows~\eqref{eq:wiu8}.

  Note that $p_\omega \wedge s'\leq  p^k_m\wedge s'$, so 
  by~\eqref{eq:wiu8} we get that $p_\omega \wedge s'$ essentially is in
  $D$. 
  We can  now derive the desired contradiction:
  \begin{equation}\label{eq:wtqwtwrqwr}
    p_\omega \wedge s'\text{ is not essentially in }D.
  \end{equation}
  Proof: Assume otherwise, i.e., for some $N$
  every $s''\in\pos(p_\omega,N)$ extending $s'$ 
  is in $D$. Pick $n>N$ as in~\eqref{eq:klwejt}.
  Then according to the definition of $\Lambda_{n,h}$,
  we get $s'\in \Lambda_{n,h_m}$ and therefore 
  $s\in \Lambda_{n,h_0}$, a contradiction.
  This shows~\eqref{eq:wtqwtwrqwr}.
\end{proof}

\subsection{$\sigma$-closed forcing notions}\label{ss:sigma}
The simplest (and not very interesting) examples of non-transitive
nep-forcings are the $\sigma$-closed ones. We use the following 
obvious fact:

\begin{Fact}
  Assume that $Q$ is upwards absolutely defined, that
  $\incomp$ is upwards absolute as well (and therefore absolute)
  and that $Q$ is $\sigma$-closed (in $V$). Then $Q$ is nep.
\end{Fact}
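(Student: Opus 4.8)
The plan is as follows. Since ``\(Q\) is upwards absolutely defined'' is already part of the hypothesis, by Definition~\ref{def:nep} it remains only to produce generic conditions: given any \((\ZFCx,\mpar)\)-candidate \(M\) and any \(p\in Q^M\), I must find an \(M\)-generic \(q\leq p\). Recall that \(M\) is countable and that \(Q^M\subseteq M\), so there are only countably many \(D\in M\) with \(M\vDash\) ``\(D\) is dense in \(Q^M\)''; fix an enumeration \((D_n)_{n\in\omega}\) of them. First I would build, by recursion in \(V\), a descending chain \(p=p_0\geq p_1\geq p_2\geq\cdots\) in \(Q^M\): given \(p_{n}\in Q^M\), the model \(M\) thinks \(D_{n+1}\) is dense, so there is \(p_{n+1}\in M\) with \(M\vDash p_{n+1}\in D_{n+1}\wedge p_{n+1}\leq p_n\), and I pick such a \(p_{n+1}\). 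By upward absoluteness of \(\inQ\) and \(\leqQ\), each \(p_n\) lies in \(Q\) and \(p_{n+1}\leq p_n\) holds in \(V\), so \((p_n)_{n\in\omega}\) is a genuine descending \(\omega\)-chain in \(Q\). Since \(Q\) is \(\sigma\)-closed in \(V\), I obtain a lower bound \(q\in Q\) with \(q\leq p_n\) for all \(n\); in particular \(q\leq p\).

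It remains to check that \(q\) is \(M\)-generic, i.e.\ that \(q\) forces \(G_Q\cap Q^M\) to be a filter on \(Q^M\) meeting every \(D\in M\) that \(M\) thinks is dense. Let \(G_Q\ni q\) be \(V\)-generic. The density requirement is immediate: for each \(n\) we have \(q\leq p_{n}\), hence \(p_{n}\in G_Q\), while \(p_{n}\in D_{n}\cap Q^M\subseteq M\); thus \(p_n\in G_Q\cap Q^M\cap D_n\). Upward closure of \(G_Q\cap Q^M\) inside \(Q^M\) is equally easy, using that \(\leqQ\) is upward absolute. The one point needing care is downward directedness. Given \(r,s\in G_Q\cap Q^M\), I would consider in \(M\) the set \(E_{r,s}\) of all \(t\in Q^M\) with \(M\vDash (t\leq r\wedge t\leq s)\vee t\incomp r\vee t\incomp s\); a routine argument inside \(M\) shows \(E_{r,s}\) is dense in \(Q^M\), and \(E_{r,s}\in M\), so \(E_{r,s}\) occurs among the \(D_n\) and is therefore met: pick \(t\in G_Q\cap Q^M\cap E_{r,s}\). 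Now \(M\vDash t\incomp r\) would, by absoluteness of \(\incomp\), give \(t\incomp r\) in \(V\), contradicting \(t,r\in G_Q\); likewise \(M\vDash t\incomp s\) is impossible. Hence \(M\vDash t\leq r\wedge t\leq s\), so by upward absoluteness \(t\leq r,s\) in \(V\), and \(t\) is the required common lower bound in \(G_Q\cap Q^M\).

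The main obstacle is exactly this last verification, and it is where both remaining hypotheses are used in an essential way. The \(\sigma\)-closure of \(Q\) (in \(V\)) is needed only to manufacture the lower bound \(q\) of the countable chain; it plays no role in the genericity argument itself, and in particular \(Q^M\) need not be \(\sigma\)-closed in \(M\). The absoluteness of \(\incomp\) is what rescues the ``filter'' clause of \(M\)-genericity: without it, as noted after Definition~\ref{def:nep}, \(G_Q\cap Q^M\) need not be directed at all, because \(M\) could believe that \(r\) and \(s\) are met only by conditions it regards as incompatible with one of them, even though they are compatible in \(V\). Here absoluteness forces the deciding element \(t\) into the genuinely-below case, closing the argument.
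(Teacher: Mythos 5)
The paper states this Fact without proof (it is introduced as an ``obvious fact''), and your argument is exactly the standard one it leaves implicit: thread a descending $\omega$-chain through the countably many sets that $M$ believes dense, take a lower bound $q\leq p$ by $\sigma$-closure in $V$, and verify the filter clause of Definition~\ref{def:MQgen} via the dense sets $E_{r,s}$, where upward absoluteness of $\incomp$ is precisely what rules out the incompatibility alternatives. Your proof is correct, and you have rightly isolated the directedness of $G_Q\cap Q^M$ as the one place where absoluteness of $\incomp$ is indispensable --- matching the paper's remark that without it a $V$-generic filter need not induce a $Q^M$-filter.
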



It is not enough to assume that $Q$ is ccc (in $V$ and all candidates) instead
of $\sigma$-closed, see Example~\ref{ex:cccnotnep}.

So the following definition 
of $Q=\{f:\omega_1\to \omega_1\text{ partial, countable}\}$ is nep:
\begin{Exm} Define $Q$ by $\mpar=\omega_1$ and $f\in Q$ if
  $f: \mpar \to \mpar$ is a countable partial function.
  Then $Q$ is nep.
\end{Exm}

Note that we cannot use $\omega_1$ in the definition directly, since there are
candidates $M$ such that $\om1^M>\om1^V$.  Neither could we use $f:\alpha\to
\mpar$, $\alpha\in\mpar$, since such an $f$ in a candidate $M$ really has
domain $\alpha\cap M$, which is generally not an ordinal (i.e.,
this definition would not be upwards absolute).

More generally, we can get the examples:
\begin{Exm}
  Assume that $\mpar$ codes the ordinals $\kappa^\mpar$ and $\lambda^\mpar$, and
  set $Q=\{f:\kappa^\mpar\to\lambda^\mpar\text{ partial, countable}\}$
  (ordered by extension). Then $Q$ is nep.
\end{Exm}

This example shows that a nep forcing can look completely different in
different candidates: Assume $\kappa^\mpar=\om1$ and $\lambda^\mpar=\om2$.
So in $V$, $Q$ collapses $\om2$ to $\om1$.
Let $N\esm H(\chi)$, $M=\ordcol(N)$, and $M_0\in V$ a
forcing-extension of $M$ for the collapse of $\om1$ to $\omega$.
Then
$M_0$ is a candidate, and $M_0$ thinks that $\om1^V$ is countable,
so $Q$ is trivial in $M_0$.  
If $M_1\in V$ is a forcing-extension  of $M$
for the collapse of $\om2$ to $\om1$, then in $M_1$
$Q$ is isomorphic to the set of countable partial functions 
from $\om1$ to $\om1$.

A slight variation (still $\sigma$-closed):
\begin{Exm}\label{ex:ctblnotabs}
  Set $\mpar=\om1$, 
  $Q=\{f:\mpar\to L\cap 2\ho\text{ partial, countable}\}$
  (ordered by extension).
  Then $Q$ is nep, and there is a candidate $M$ which thinks that
  $A$ is a countable maximal antichain of $Q^M$, 
  but $A$ is not maximal in $V$.
\end{Exm}

\begin{proof}
  $x\in L$ is upwards absolute, so $\in_Q$, $\leq_Q$ and $\incomp_Q$ are
  upwards absolute. Clearly $Q$ is $\sigma$-closed in $V$. So $Q$ is nep.
  Assume $V=L$, and pick some
  $N\esm L_\kappa$ for $\kappa$ regular.
  Set $M=\ordcol(N)$.
  In $L$, construct $M'$ as a forcing-extension of $M$ for the collapse of
  $\om1$
  to $\omega$. Then $M'$ thinks $L\cap  2\ho$ is countable, i.e., that
  $\{(0,x):\, x\in L\cap 2\ho\}$
  is a countable maximal antichain.
\end{proof}

Another, trivial example for a countable antichain with non-absolute maximality
is the (trivial) forcing defined by $Q=\{1_Q\}\cup (L\cap 2\ho)$ and $x\leq y$
iff $y=1_Q$ or $x=y$.

\subsection{Non-transitive creature forcing}\label{ss:complexcreatures}

Some creature forcing constructions use a countable support product (or a
similar construction) built from basic creature forcings.
In the useful cases these forcings can be shown to be proper, and the proof
usually also shows nep. One would take the index set of the product to be
an ordinal $\kappa$, and choose the nep parameter
$\mpar$ with domain $\kappa$ such that 
$\mpar(\alpha)$ is the nep-parameter (a real) for the basic creature forcings
$Q_\alpha$.

To give the simplest possible example:
\begin{Lem}\label{ex:sacks}
  The countable support product (of any size) of Sacks forcings is nep.
\end{Lem}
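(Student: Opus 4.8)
The plan is to follow the same fusion-and-pure-decision template as the creature forcing example in Subsection~\ref{ss:simplecreatures}, adapted to the countable support product of Sacks forcing. Fix notation: the index set is an ordinal $\kappa$, encoded in the nep parameter $\mpar$ with $\dom(\mpar)=\kappa$, each $\mpar(\alpha)$ coding the (trivial) real parameter for the $\alpha$-th Sacks factor. A condition $p\in Q$ is a countable partial function on $\kappa$ such that each $p(\alpha)$ is a perfect subtree of $2^{<\omega}$, ordered by $q\leq p$ iff $\dom(q)\supseteq\dom(p)$ and $q(\alpha)\subseteq p(\alpha)$ for all $\alpha\in\dom(p)$.

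First I would check upward absoluteness (the first clause of Definition~\ref{def:nep}). ``Being a perfect tree'' is $\bS11$, hence absolute for candidates (every $\bS11$ formula is absolute, cf.\ the list following Fact~\ref{fact:labeled}); tree inclusion $q(\alpha)\subseteq p(\alpha)$ is absolute, both sides being subsets of $2^{<\omega}\in\hco$. For the support: if $M$ is a candidate and $p\in Q^M$, then $p\subseteq M$ (as $p\notin\ON$ and $M$ is ord-transitive), so $\dom(p)\subseteq M\cap\kappa$ is genuinely countable in $V$ simply because $M$ is countable; hence $p\in Q$ in $V$, and ``$p\in Q$'', ``$q\leq p$'' are upward absolute. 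This gives that $Q$ is upward absolutely defined, and normality ($Q\subseteq\hco$, absoluteness between $V$ and $H(\chi)$) is immediate for the same reasons.

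The real work is the generic-condition clause. I would set up Sacks product fusion: for finite $F\subseteq\kappa$ and $n\in\omega$, write $q\leq_{n,F}p$ to mean $q\leq p$ and for every $\alpha\in F$ the tree $q(\alpha)$ agrees with $p(\alpha)$ up to and including its $n$-th splitting level. A fusion sequence $p_0\geq_{0,F_0}p_1\geq_{1,F_1}\cdots$ with $F_0\subseteq F_1\subseteq\cdots$ and $\bigcup_n F_n\supseteq\bigcup_n\dom(p_n)$ has a canonical limit $p_\omega$, $p_\omega(\alpha)=\bigcap_n p_n(\alpha)$, which is a condition with $p_\omega\leq p_n$ for all $n$ (each coordinate stabilizes below every splitting level, so the intersection stays perfect, and the support $\bigcup_n\dom(p_n)$ is countable). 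The key combinatorial lemma is pure decision for the product: given $p$, finite $F$, $n$, and an open dense $D$, there is $q\leq_{n,F}p$ such that for each of the finitely many ``front conditions'' $q\wedge s$ (restricting the coordinates in $F$ to one of the finitely many nodes at splitting level $n$) we have $q\wedge s\in D$. This is the standard Sacks/Sacks-product fusion argument, and crucially every notion it uses (perfect tree, splitting level, $\wedge$, the finite front) is Borel, hence absolute for candidates. With this in hand the genericity argument is the exact analogue of the creature case: working inside $M$, enumerate all dense sets of $Q^M$ as $\langle D_n:n\in\omega\rangle$ and fix a bookkeeping that also exhausts $M\cap\kappa$; given $p_0\in Q^M$, build in $M$ a fusion sequence with $p_{n+1}\leq_{n,F_n}p_n$ and $p_{n+1}$ reducing each of its front conditions into $D_n$. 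Form the limit $p_\omega$ in $V$; it is a genuine condition with $p_\omega\leq p_0$ since its support $\bigcup_n\dom(p_n)\subseteq M$ is countable. To see $p_\omega$ is $M$-generic (Definition~\ref{def:MQgen}), take a $V$-generic $G\ni p_\omega$ and fix $D_n$; the finite set $\{p_{n+1}\wedge s:s\text{ a front node}\}$ is predense below $p_{n+1}$, so some $p_{n+1}\wedge s\in G$, and since $p_{n+1}\wedge s\in D_n\cap M$ we get $G\cap D_n\cap M\neq\emptyset$.

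The main obstacle, as flagged in the discussion after Definition~\ref{def:nep}, is that $\incomp$ and hence ``maximal antichain''/``predense'' are not upward absolute, so one cannot simply transport $M$'s witnesses to $V$. The device that circumvents this — and the crux of the proof — is exactly that each reduction step produces a \emph{finite} predense front whose predensity below $p_{n+1}$ is checked directly in $V$ from the Borel geometry of perfect trees (any $r\leq p_{n+1}$ meets, on each coordinate of $F_n$, some node at splitting level $n$, hence is compatible with a front condition). The only additional wrinkle beyond the single-factor Sacks argument is the countable-support bookkeeping: interleaving the enumeration of dense sets with an exhaustion of $M\cap\kappa$ so that the limit acquires the correct countable support and lies below $p_0$.
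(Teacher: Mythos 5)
Your proposal is correct and follows essentially the same route as the paper's proof: the same fusion relation $q\leq_{n,u}p$ preserving splitting fronts on finitely many coordinates, the same pure-decision lemma, the same bookkeeping exhausting the countable set $M\cap\kappa$, and the same final genericity argument via the finite predense fronts $\{p_m\wedge s:\, s\in\pos_{u_m}(p_m,m)\}$ whose predensity is verified directly in $V$. Your added detail on upward absoluteness (perfect trees being absolutely definable, and $\dom(p)\subseteq M$ being countable by ord-transitivity) merely spells out what the paper states briefly, so no substantive difference remains.
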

\begin{proof}
  Again, the standard proof of properness works.
  First some notation:
  A splitting node is a node that has two immediate successors.
  The $n$-th splitting front $F^T_n$ of a perfect
  tree $T\subseteq 2^{<\omega}$ is 
  the set of splitting nodes $t\in T$ such that $t$ has 
  exactly $n$ splitting nodes 
  below it. Note that $F^T_n$ is a front (i.e., it meets
  every branch) and therefore finite (since $T$ has finite splitting).
  Let $\kappa$ be the index set of the product. So a condition
  $p$ consists of a countable domain $\dom(p)\subseteq \kappa$ and
  for every $i\in\dom(p)$ a perfect tree $p(i)$.
  In particular, $q\leq p$ means $\dom(q)\supseteq \dom(p)$
  and $q(i)\subseteq p(i)$ for all $i\in\dom(p)$.
  \begin{itemize}
    \item
      For $u\subseteq \kappa$ finite, 
      $q\leq_{n,u} p$ means: 
      $q\leq p$, and 
      $F_n^{p(i)}=F_n^{q(i)}$ for all $i\in u$.
    \item 
      Fusion: If we use some simple bookkeeping, we can guarantee
      that a sequence $p_{n+1}\leq_{n,u_n}p_n$ has a limit $p_\omega$.
      (It is enough to make sure that the $u_n$ are
      increasing and that $\bigcup_{n\in\omega}u_n$
      covers $\dom(p_\omega)$.)
    \item
      For $u\subseteq \dom(p)$ finite,
      we set $\pos_u(p,n)=\prod_{i\in u} F_n^{p(i)}$ (a finite set).
      For $\eta\in\pos_u(p,n)$ there is a canonical
      $p\wedge \eta \leq p$  defined in the obvious way (we
      increase some trunks).
    \item
      Pure decision: given a condition $p$,
      some finite $u\subseteq \dom(p)$,
      some $n\in\omega$ and an open dense set $D$, 
      we can
      strengthen $p$ to some $q\leq_{n,u}p$
      such that $q\wedge \eta\in D$ for all $\eta\in\pos_u(q,n)$.

      To show this, just enumerate $\pos_u(q,n+1)$ as
      $\nu_0,\dots,\nu_{M-1}$,
      set $p_0=p$, given $p_m$ find  $p'\leq p\wedge \nu_m$
      in $D$ and then set $p_{m+1}$ to be $p'$ ``above $\nu_m$''
      and $p_m$ ``on the parts incompatible with $\nu_m$''.
      Then set $q=p_M$.
    \item
      This implies nep: 
      Let the forcing parameter $\mpar$ code $\kappa$
      (e.g., $\mpar:\kappa\to\{0\}$).
      Then we can define 
      $P$ to consist of all countable partial functions $p$ with domain 
      $\dom(\mpar)$ such that $p(\alpha)$ is a perfect tree for all 
      $\alpha\in\dom(p)$. This is an absolute definition, and 
      compatibility is absolute.

      Fix $p=p_0\in M$.
      Enumerate as $D_0,D_1,\dots$ all sets in $M$ such that
      $M$ thinks $D_i$ is dense. 
      Given $p_{m-1}\in M$, pick a suitable $u_{m}$ and find in $M$ some
      $p_{m}\leq_{u_{m},m} p_{m-1}$ such that $p_{m}\wedge s\in D_{m}$
      for all $s\in\pos_u(p,m)$. In $V$, fuse the sequence into
      $p_\omega$. Then $p_\omega\leq p$ is $M$-generic: 

      Assume that $G$ contains $p_\omega$ an therefore $p_m$.
      We know that $p_m \wedge s$ is in $G$ for some
      $s\in \pos_{u_m}(p_m,m)$. Then $p_m \wedge s\in D_m\cap M\cap G$.
    \item With similar standard arguments we get $\omega^\omega$-bounding.
      \qedhere
  \end{itemize}
\end{proof}

\subsection{Idealized forcing}\label{ss:zap}

Zapletal \cite{MR2391923} developed the theory of (proper) forcing notions of the
form $P_I=\textrm{Borel}/I$ for (definable) ideals $I$. (A smaller set is a 
stronger condition.) The generic filter $G_I$ of such forcing notions
is always determined by a canonical generic real $\n\eta_I$. 
How does nep and creature forcing fit into this framework?

\begin{itemize}
  \item According to the Rule of Thumb \ref{ruleofthumb},
    most $P_I$ which can be shown to be proper, are in fact nep.
    But we do not know of any particular theorems or counterexamples.
  \item In particular, we do not know whether there is a good
    characterization of
    the (definable) ideals $I$ such that $P_I$ is nep.
    (Even assuming that $P_I$ is proper, which is a tricky 
    property in itself, cf.~\cite[2.2]{MR2391923}.)
  \item Most nicely definable forcing notions 
    with hereditarily countable conditions such that the
    generic object is determined by a real are equivalent to some $P_I$,
    and \cite{MR2391923} proves several theorems in that direction. 
    (E.g., in many ccc cases there is a natural generic real,
    and the ideal $I$ can be taken to consist of those Borel sets
    that are forced not to contain the generic.)
    However, there are natural examples of
    creature forcings where the generic filter is determined by
    a generic real and yet the forcing is not of the form $P_I$.
    The next lemma gives an example.
  \item Many of the nice consequences that we get for (transitive) nep forcings
    also follow for forcings of the form $P_I$
    (not assuming nep, but sometimes
    other additional properties).
    For example the preservation Theorem~\cite[9.4]{MR2155272}
    mentioned in the introduction corresponds to~\cite[6.3.3]{MR2391923}.
\end{itemize}

The following lemma is due to Zapletal.\footnote{Jind{\v{r}}ich Zapletal,
personal communication, November 2007.}

\begin{Lem}\label{lem:zapslem}
  Let  $P$ be the forcing of subsection~\ref{ss:simplecreatures}.
  \begin{enumerate}
    \item The generic filter $G$ is determined by the generic real $\n\eta$.
    \item $(P,\n\eta)$ is not equivalent to a forcing of the form
      $(P_I,\n\eta_I)$.
  \end{enumerate}
\end{Lem}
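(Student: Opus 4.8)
The plan is to handle the two parts separately but to hang both on the same phenomenon: in $P$ two conditions can be incompatible while having exactly the same set of cofinal branches. For a condition $p$ write $[p]\DEFEQ\{x\in\prod_i\mathbf F(i):(\forall i)\,x(i)\in\val(p(i))\}$ for its branch set. For part~(1) I would first record the two easy halves of the determination: since the trunks of the conditions in $G$ increase with union $\n\eta$, for every $p\in G$ and every $n$ the condition $p\wedge(\n\eta{\restriction}n)$ lies in $G$ and $\n\eta\in[p]$, so $\n\eta\in V[G]$ and $V[\n\eta]\subseteq V[G]$. For the reverse inclusion I would reconstruct $G$ inside $V[\n\eta]$. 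The naive guess ``$p\in G\iff\n\eta\in[p]$'' fails (by the incompatible conditions built below), so the reconstruction must detect norm-genericity, not just branch membership. Using the pure decision and fusion already established, I would fix in $V$, for each open dense $D$ and each $p$, the canonical pure reduction of $p$ essentially into $D$, and call $p$ \emph{$\n\eta$-generic} if $\n\eta$ threads all these reductions correctly; the genericity argument that proves pure decision works then shows this definable set is exactly $G$. As the definition quantifies over the ground-model dense sets it is evaluated in $V[\n\eta]$, giving $G\in V[\n\eta]$, hence $V[G]=V[\n\eta]$; the same computation shows $G\mapsto\n\eta^G$ is injective on generics. The key point, foreshadowing~(2), is that this reconstruction is definable but \emph{not} Borel.

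For part~(2) I would use the standard translation: if $(P,\n\eta)$ were equivalent to some $(P_I,\n\eta_I)$ by an isomorphism carrying $\n\eta$ to $\n\eta_I$, then, because in $P_I$ the weakest condition forcing $\n\eta_I\in B$ is exactly the class of $B$ modulo $I$, every condition $p$ would be equivalent to the weakest $c$ with $c\forc\n\eta\in B_p$ for some ground-model Borel $B_p$; equivalently the generic-real set $C_p\DEFEQ\{x:p\in G_x\}$ (well defined on generic $x$ by part~(1), with $G_x$ reconstructed from $x$) would be Borel modulo $I$, so ``$p\in G$'' would be a Borel event about $\n\eta$. To block this I would exhibit incompatible conditions with identical branch sets. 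Split each $\mathbf F(i)$ into halves $L_i\sqcup R_i$ and put $q_0(i)(b)=\lfloor\log_2(|b\cap L_i|+1)\rfloor$ and $q_1(i)(b)=\lfloor\log_2(|b\cap R_i|+1)\rfloor$, each with value set $\mathbf F(i)$. Both are conditions (the norms still grow fast enough, since $\mathbf F$ grows fast) and $[q_0]=[q_1]=\prod_i\mathbf F(i)$, yet $q_0\incomp q_1$: for a common refinement $r$, writing $b=\val(r(i))$ one gets $r(i)(b\cap L_i)\le q_1(i)(b\cap L_i)=0$ and $r(i)(b\cap R_i)\le q_0(i)(b\cap R_i)=0$, so bigness forces $\nor(r(i))\le 1$ for every $i$ and the $\liminf$ condition fails. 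By part~(1) this gives $C_{q_0}\cap C_{q_1}=\emptyset$, although $q_0$ and $q_1$ see exactly the same branches.

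The hard part will be to convert this into a contradiction, that is, to show $C_{q_0}$ is not Borel. The obstacle is precisely that deciding whether $\n\eta$ is generic through $q_0$ rather than through $q_1$ cannot be read off the branch $\n\eta$ by any ground-model Borel set: the two conditions differ only in an asymptotic norm-refinement feature. This is clarified by the global swap $\sigma$ exchanging $L_i$ with $R_i$ at every level, which is a measure- and topology-preserving forcing automorphism with $\sigma(q_0)=q_1$ and $C_{q_1}=\sigma[C_{q_0}]$; a Borel $B=C_{q_0}$ would then have to satisfy $B\cap\sigma[B]=\emptyset$ while being ``branch-indistinguishable'' from $\sigma[B]$. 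I would make this precise by a fusion/Baire-category argument inside $P$, showing that below any condition one can steer the generic to realize either $q_0$-type or $q_1$-type norm behavior while leaving unchanged the branch behavior that a fixed Borel $B$ can detect; hence no Borel $B$ separates $C_{q_0}$ from $C_{q_1}$, and membership in $C_{q_0}$ is in fact a properly non-Borel property of $\n\eta$ coming from the $\liminf$ of the norms along the refinement. Establishing this non-separability rigorously --- the interaction of bigness and the $\liminf$ norm condition with arbitrary Borel sets --- is the real difficulty; the rest is bookkeeping.
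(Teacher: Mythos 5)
Both halves of your write-up stop exactly where the work begins. For part~(1), the easy inclusion $V[\n\eta]\subseteq V[G]$ is fine, and your instinct that the reconstruction must be norm-sensitive rather than branch-sensitive is correct, but the reconstruction itself is only named, not defined: ``$p$ is $\n\eta$-generic if $\n\eta$ threads all these reductions correctly'' specifies no set, and nothing is offered toward the two claims that constitute the actual proof, namely that the candidate set contains $G$ and that any two of its members are compatible (so that it equals $G$). The paper does this concretely: it defines in~\eqref{eq:hj35} the set $H=H_0\cap \{p:\, (\forall \bar b\in X(p)\cap V)\ \n\eta\notin \n N_{p,\bar b}\}$, where $X(p)$ collects the subsequences $b_n\subseteq\val(p(n))$ of bounded relative norm and $\n N_{p,\bar b}$ is the set of branches hitting $b_n$ infinitely often; the inclusion $G\subseteq H$ is the bigness argument~\eqref{eq:gurke1}, and the compatibility of $p_1,p_2\in H$ requires the meet operation $\phi_0\wedge\phi_1$ on creatures~\eqref{eq:join}, the decomposition property~\eqref{eq:wq53}, and the resulting explicit characterization of incompatibility~\eqref{eq:wkljtq}. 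None of this machinery, nor any substitute for it, appears in your sketch, so part~(1) is a genuine gap.

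For part~(2), your pair $q_0,q_1$ is a correct and explicit instance of incompatible conditions with identical value sets (the bigness, norm and incompatibility checks all go through), but the step you yourself defer as ``the real difficulty'' --- that no Borel set separates $C_{q_0}$ from $C_{q_1}$ --- \emph{is} the theorem, and the automorphism remark is not a proof of it; it is not even clear that Borel non-separability holds, since nothing in your sketch excludes a non-closed Borel separator. The paper avoids this question entirely: assuming $(P,\n\eta)$ is equivalent to $(P_I,\n\eta_I)$, it uses $\omega^\omega$-bounding and Borel (indeed continuous) reading of names, the dense embedding $B\mapsto\llbracket\n\eta\in B\rrbracket$ of Lemma~\ref{lem:forceq}, and Zapletal's theorem that $I$-positive Borel sets contain $I$-positive compact subsets, to obtain (Lemma~\ref{lem:gurkelem8}): below some $p_1$, every $p$ admits a \emph{compact} $C$ with $0\neq\llbracket\n\eta\in C\rrbracket\leq p$. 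Closed sets are determined by branch sets, so applying this to $q=\half(p)$ and taking $r\leq p$ with $r\incomp q$ and $\val(r(n))\subseteq\val(q(n))$ as in~\eqref{eq:incomp} yields $r\forc\n\eta\in C$, hence $r\leq^* q$, a contradiction --- no non-Borelness claim is ever needed. There is also a second concrete defect in your plan: any Borel or compact representation extracted from the $P_I$-equivalence is only available \emph{below} the condition $p_1$, so one needs the same-branch incompatible pair localized below an arbitrary condition; your single pair sits at the root of $P$, and localizing the $L_i/R_i$ construction below $p$ is not immediate (the pointwise minimum of two norms need not satisfy bigness, which is exactly why the paper develops the meet $\wedge$ and, for~\eqref{eq:incomp}, uses the halving operation, which does work below every $p$).
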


To make this precise, we have to specify what we mean with ``equivalent''.
We use the following version:
\begin{Def}\label{def:forceq}
 A forcing notion $P$ together with the $P$-name $\n\eta$
 are equivalent to $P_I$ (with the canonical generic real $\n\eta_I$),
 if there are $P$-names $G_I'$ and $\n\eta'_I$ and
  a $P_I$-name $G'$ such that $P$ forces:
  $G_I'$ is the $P_I$-generic filter over
  $V$ corresponding to the generic real $\n\eta'_I$, and $G'[G'_I]_{P_I}=G$.
\end{Def}
I.e., we can reconstruct the $P$-generic filter $G$ by evaluating the
$P_I$-name $G'$ with the $P_I$-generic filter $G'_I$.

In particular, this implies
\begin{equation}\label{eq:jhetw}
  (\forall p\in P)\,(\exists q\leq p)\, (\exists  \tilde B\in P_I)\
    q\forc_P \tilde B\in G'_I\ \&\  \tilde B\forc_{P_I} p\in G'.
\end{equation}

We will need the following straightforward fact:
\begin{Lem}\label{lem:forceq}
  Assume that $(P,\n\eta)$ is equivalent 
  to $P_I$, and that there is a Borel function $f$ such that
  $\forc_P \n\eta'_I=f(\n\eta)$.
  Then the canonical map $\varphi: P_J\to \ro(P)$ 
  defined by $B\mapsto \llbracket \n\eta\in B\rrbracket_P$ 
  is a dense embedding, where we set
  $J=\{B:\, \forc_P \n\eta\notin B^{V[G]}\}$.
\end{Lem}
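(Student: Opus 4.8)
The plan is to separate the purely order-theoretic requirements of a dense embedding, which are immediate, from density of the range, which carries the content. First I would record the key translation coming straight from the definition of $J$: for a Borel set $B$ one has $B\in J$ iff $\forc_P\n\eta\notin B$ iff $\llbracket\n\eta\in B\rrbracket_P=0$. Applying this to $B\setminus C$ and to $B\cap C$ at once shows that $\varphi$ is well defined on $P_J$ (if $B\,\triangle\,C\in J$ then $\varphi(B)=\varphi(C)$) and is simultaneously order- and incompatibility-preserving in both directions: $B\leq_J C\Leftrightarrow\varphi(B)\leq\varphi(C)$ because $B\setminus C\in J$ says exactly $\llbracket\n\eta\in B\rrbracket_P\leq\llbracket\n\eta\in C\rrbracket_P$, and $B\incomp_J C\Leftrightarrow\varphi(B)\incomp\varphi(C)$ because $B\cap C\in J$ says exactly $\varphi(B)\wedge\varphi(C)=0$. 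Thus $\varphi$ is an isomorphism of $P_J$ onto its range, with $\varphi(B)\neq 0$ precisely when $B\notin J$. Along the way I would note that $J$ is a proper $\sigma$-ideal, so that $P_J$ is a genuine idealized forcing: a countable conjunction of statements forced by $P$ is again forced, and the whole space is not in $J$.

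The substance is to show that $\{\varphi(B):B\notin J\}$ is dense in $\ro(P)\setminus\{0\}$, and here I would use the equivalence of $(P,\n\eta)$ with $P_I$ together with the Borel function $f$. The equivalence in Definition~\ref{def:forceq} recovers each generic object from the other ($G'_I$ from the $P$-generic, and the $P$-generic from $G'_I$ via the $P_I$-name $G'$), and therefore induces a Boolean isomorphism $\iota:\ro(P_I)\to\ro(P)$ with $\iota(\tilde B)=\llbracket\tilde B\in G'_I\rrbracket_P$ and inverse $p\mapsto\llbracket p\in G'\rrbracket_{P_I}$. Since $G'_I$ is the $P_I$-generic filter attached to its generic real $\n\eta'_I$, we have $\tilde B\in G'_I$ iff $\n\eta'_I\in\tilde B$, and the hypothesis $\forc_P\n\eta'_I=f(\n\eta)$ lets me compute
\[
  \iota(\tilde B)=\llbracket\tilde B\in G'_I\rrbracket_P
  =\llbracket\n\eta'_I\in\tilde B\rrbracket_P
  =\llbracket f(\n\eta)\in\tilde B\rrbracket_P
  =\llbracket\n\eta\in f^{-1}(\tilde B)\rrbracket_P
  =\varphi\bigl(f^{-1}(\tilde B)\bigr),
\]
where $f^{-1}(\tilde B)$ is Borel because $f$ is. As the Borel sets $\tilde B\notin I$ are dense in $\ro(P_I)$ and $\iota$ is onto, the values $\varphi(f^{-1}(\tilde B))$ are dense in $\ro(P)$; each is $\varphi$ of an honest Borel set not in $J$ (its value is nonzero). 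That is exactly density of the range.

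If one prefers to avoid the abstract isomorphism, the same conclusion reads directly off~\eqref{eq:jhetw}: given $p\in P$, choose $q\leq p$ and $\tilde B\in P_I$ as there and set $B\DEFEQ f^{-1}(\tilde B)$. The first clause $q\forc_P\tilde B\in G'_I$ rewrites, as in the display, to $q\forc_P\n\eta\in B$, i.e.\ $q\leq\varphi(B)$; the second clause $\tilde B\forc_{P_I}p\in G'$ says $\tilde B\leq\iota^{-1}(p)$, hence $\varphi(B)=\iota(\tilde B)\leq p$. So $0<q\leq\varphi(B)\leq p$, and below every $p$ sits a nonzero value of $\varphi$.

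I expect the main obstacle to be bookkeeping around the generic reals rather than any deep point: one must justify that the equivalence produces the Boolean isomorphism $\iota$ in the stated form (using $G'[G'_I]_{P_I}=G$ and the definability of $G'_I$ from $G$ to see the two maps are mutually inverse complete homomorphisms), and that the idealized-forcing characterization ``$\tilde B\in G'_I$ iff $\n\eta'_I\in\tilde B$'' of the generic filter by its generic real is available. Once the identity $\iota(\tilde B)=\varphi(f^{-1}(\tilde B))$ is secured — which is precisely where the Borel function $f$ is indispensable, since it pins the image of $\iota$ down to $\varphi$ of genuine Borel sets — the rest is the routine translation recorded above.
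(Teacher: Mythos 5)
Your proposal is correct, and its load-bearing part is exactly the paper's argument: the paper's proof consists solely of your third paragraph — given $p\in P$, take $q\leq p$ and $\tilde B$ as in~\eqref{eq:jhetw}, set $B=f^{-1}\tilde B$, and read off $q\leq\llbracket\n\eta\in B\rrbracket\leq p$ from the chain $\n\eta\in B$ iff $f(\n\eta)\in\tilde B$ iff $\n\eta'_I\in\tilde B$ iff $\tilde B\in G'_I$, where the last statement together with $\tilde B\forc_{P_I}p\in G'$ and $G=G'[G'_I]$ yields $p\in G$. The routine verifications in your first paragraph (well-definedness modulo $J$, preservation of order and incompatibility in both directions, $J$ being a $\sigma$-ideal) are left implicit in the paper, and spelling them out is harmless and correct.

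One caution about your second paragraph: Definition~\ref{def:forceq} does not obviously make $\iota$ a Boolean \emph{isomorphism}. Since $G'_I$ is forced to be $V$-generic, $\tilde B\mapsto\llbracket\tilde B\in G'_I\rrbracket_P$ is a complete homomorphism, but neither its injectivity nor its surjectivity onto $\ro(P)$ is immediate from the definition — the paper derives only the density-type statement~\eqref{eq:jhetw} — so the step ``since $\iota$ is onto, the values $\varphi(f^{-1}(\tilde B))$ are dense'' is unjustified as written. This does not damage your proof, because your fallback argument needs only monotonicity of $\iota$ and of $p\mapsto\llbracket p\in G'\rrbracket_{P_I}$ together with $\iota(\llbracket p\in G'\rrbracket_{P_I})\leq p$, all of which follow from the forcing theorem applied over $V$ with the generic $G'_I$; alternatively you can bypass $\iota^{-1}$ entirely and argue, as the paper does, that $\tilde B\in G'_I$ together with $\tilde B\forc_{P_I}p\in G'$ gives $p\in G'[G'_I]=G$, hence $\varphi(B)\leq p$ directly. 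If you keep the second paragraph, replace the surjectivity claim by an appeal to~\eqref{eq:jhetw}, which is what actually witnesses density.
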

\begin{proof}
  Given $p\in P$, we need some $B$ such that $0\neq \llbracket
  \n\eta\in B\rrbracket\leq p$. Let $q,\tilde B,q$ be as in~\eqref{eq:jhetw},
  and set $B=f^{-1}\tilde B$. In particular $\tilde B\forc_{P_I} p\in G'$,
  so 
  \[
    \n\eta\in B\text{ iff }
    f(\n\eta)=\in \tilde B\text{ iff }
    \n\eta'_I\in \tilde B\text{ iff }
    \tilde B\in G'_I\text{, which implies }
    p\in G'[G'_I],
  \]
  i.e., $p\in G$. Also, $q\forc_P \tilde B\in G'_I$, so
  $q\leq \llbracket \n\eta\in B\rrbracket\leq p$.
\end{proof}

A density argument together with \cite[3.3.2]{MR2391923}
gives the following:
\begin{Lem}\label{lem:gurkelem8}
  Assume that $P$ is $\omega^\omega$ bounding and has
  Borel reading of names with respect to the $P$-name
  $\n\eta$ and that $(P,\n\eta)$ is equivalent to 
  $P_I$. Fix $p_0\in P$. Then there is a $p_1\leq p_0$
  such that $P'=\{p\in P:\, p\leq p_1\}$ satisfies the 
  following:
  For all $p$ there is a 
  compact set $C$ such that
  $0\neq \llbracket \n\eta\in C \rrbracket_{\ro(P')}\leq p$.
\end{Lem}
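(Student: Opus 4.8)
The plan is to pass below a condition on which the auxiliary generic real $\n\eta'_I$ becomes a Borel function of $\n\eta$, to use Lemma~\ref{lem:forceq} to present the forcing (below that condition) as an idealized forcing $P_J$ with $B\mapsto\llbracket\n\eta\in B\rrbracket$ a dense embedding, and then to quote the Zapletal dichotomy \cite[3.3.2]{MR2391923}: in a proper, $\omega^\omega$-bounding idealized forcing the compact positive sets are dense. A straightforward density argument in $\ro(P')$ then delivers the compact set $C$.

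First I would produce $p_1$ and the Borel function. Since $(P,\n\eta)$ is equivalent to $P_I$, the witness $\n\eta'_I$ is a $P$-name for a real, so Borel reading of names with respect to $\n\eta$ yields a condition $p_1\le p_0$ and a Borel function $f$ with $p_1\forc_P\n\eta'_I=f(\n\eta)$. Put $P'=\{p\in P:\, p\le p_1\}$. Then $\forc_{P'}\n\eta'_I=f(\n\eta)$, and the equivalence of $(P,\n\eta)$ with $P_I$ restricts to an equivalence of $(P',\n\eta)$ with an idealized forcing $P_I\restriction B_0$ for a suitable $I$-positive $B_0$. Thus the hypotheses of Lemma~\ref{lem:forceq} hold for $P'$, and the map $\varphi\colon P_J\to\ro(P')$, $B\mapsto\llbracket\n\eta\in B\rrbracket_{P'}$, is a dense embedding, where $J=\{B:\,\forc_{P'}\n\eta\notin B^{V[G]}\}$.

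Next I would set up and apply \cite[3.3.2]{MR2391923}. By hypothesis $P$ is $\omega^\omega$-bounding, and $P$ is proper (in the intended application it is the nep forcing of Subsection~\ref{ss:simplecreatures}); both properties pass to the restriction $P'$ and, since $P_J$ densely embeds into $\ro(P')$ and is therefore forcing equivalent to $P'$, also to $P_J$. Hence $P_J$ is a proper, $\omega^\omega$-bounding idealized forcing, and \cite[3.3.2]{MR2391923} shows that the compact $J$-positive sets are dense in $P_J$.

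Finally I would run the density argument. Fix $p\in P'$. As $\varphi$ has dense range in $\ro(P')$, there is a $J$-positive Borel $B$ with $0\neq\llbracket\n\eta\in B\rrbracket_{\ro(P')}\le p$. By density of the compact sets in $P_J$ choose a compact $J$-positive $C\subseteq B$. Since $\varphi$ is an order-preserving embedding and $C\notin J$, we get $0\neq\llbracket\n\eta\in C\rrbracket_{\ro(P')}\le\llbracket\n\eta\in B\rrbracket_{\ro(P')}\le p$, as required. The hard part will be the first step: checking that Borel reading of names really provides a single Borel $f$ on a positive condition $p_1$ and that the equivalence with $P_I$ survives restriction to $p_1$, so that Lemma~\ref{lem:forceq} is applicable; once $P_J$ is in hand, the transfer of properness and bounding and the appeal to \cite[3.3.2]{MR2391923} are routine.
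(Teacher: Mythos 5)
Your proposal is correct and follows essentially the same route as the paper's own proof: Borel reading of names below $p_0$ to get $p_1$ and $f$ with $p_1\forc \n\eta'_I=f(\n\eta)$, then Lemma~\ref{lem:forceq} to obtain the dense embedding $\varphi:P_J\to\ro(P')$, then \cite[3.3.2]{MR2391923} to find a compact $J$-positive subset of a Borel set $B$ with $\varphi(B)\leq p$. The only difference is that you spell out the restriction-to-$p_1$ issue and the transfer of properness and bounding to $P_J$, which the paper passes over silently.
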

Borel reading means: For all $P$-names
$\n r$ for a real and all $p\in P$ there is a Borel function $f$ and a $q\leq
p$ forcing that $\n r=f(\n\eta)$.

Note that the forcing of Subsection~\ref{ss:simplecreatures} has
Borel reading (even continuous reading) of names from the canonical
generic $\n\eta$.

\begin{proof}
  Given $p_0\in P$, there is some $p_1\leq p$ and $f$ Borel
  such that $p_1$ forces $\n\eta'_I$ to be $f(\n\eta)$.
  So according to Lemma~\ref{lem:forceq}, the
  canonical embedding $\varphi: P_J\to \ro(P')$ 
  is dense for $J=\{B:\, p_1\forc_P \n\eta\notin B^{V[G]}\}$
  and $P'=\{p\leq p_1\}$.
  Given $p\in P'$, find some Borel-code $B$ such that $\varphi(B)\leq p$.
  \cite[3.3.2]{MR2391923} gives a $J$-positive compact subset of $B$.
\end{proof}

\begin{proof}[Proof of \ref{lem:zapslem}]
  Proof of (1).

  We will use the following property of norms, cf.\ Definition~\ref{def:norm}:
  \begin{equation}\label{eq:join}
    \parbox{0.8\textwidth}{For norms $\phi_0,\phi_1$ with
      $\val(\phi_0)\cap\val(\phi_1)\neq\emptyset$ there is a weakest norm
      $\phi_0\wedge \phi_1$ stronger than both $\phi_0$ and $ \phi_1$.}
  \end{equation}
  Proof: We define $\psi=\phi_0\wedge \phi_1$ the following way:
  $\val(\psi)=\val(\phi_0)\cap\val(\phi_1)$  and $\psi(b)$ is defined
  by induction on the cardinality of $b$: If $|b|\leq 1$, then
  $\psi(b)=\min( \psi(b)=\min\phi_0(b),\phi_1(b))$. Otherwise,
  $\psi(b)=\min(X(b))$, for
  \[
     X(b)=\{\phi_0(b),\phi_1(b)\}
             \cup\{1+\max(\psi(b_0),\psi(b_1)):\, b_0\cup b_1=b \}.
  \]
  We have to show that $\psi$ is a norm: Bigness follows imediately
  from the definition. It remains to show monotonicity. We show by
  induction on $b$:
  \[
    (\forall c\subseteq b)\, \psi(c)\leq \psi(b)
  \]
  I.e., $(\forall m\in X(b))\, \psi(c)\leq m$.
  For $m=\phi_0(b)$, we have $\psi(c)\leq \phi_0(c)\leq  \phi_0(b)=m$.
  The same holds for $m=\phi_1(b)$. 
  So assume $m=1+\max(\psi(b_0),\psi(b_1))$, without loss of generality
  for nonempty and disjoint $b_0,b_1$.
  Then $b_0\cap c\subsetneq b $ and $b_1\cap c\subsetneq b $, so by
  definition $\psi(c)\leq 1+\max(\psi(b_0\cap c), \psi(b_1\cap c))$
  which is (by induction) at most $1+\max(\psi(b_0), \psi(b_1))=m$.

  On the other hand it is clear that $\psi$ is the biggest possible
  norm that is smaller than $\phi_0$ and $\phi_1$. So we get~\eqref{eq:join}.

  We will also need:
  \begin{equation}\label{eq:wq53}
    (\forall b\subseteq \val(\phi_0 \wedge \phi_1))\,
    (\exists b_0,b_1)\,
    b=b_0\cup b_1\ \&\ 
    (\phi_0 \wedge \phi_1)(b)\geq \max(\phi_0(b_0),\phi_1(b_1))
  \end{equation}

  Proof: Again, write $\psi$ for $\phi_0 \wedge \phi_1$.
  By induction on $|b|$: If $\psi(b)=\phi_0(b)$,
  we can set $b_0=b$ and $b_1=\emptyset$. Analogously for $\psi(b)=\phi_1(b)$.
  If $\psi(b)=1+\max (\psi(c_0),\psi(c_1))$ for
  $c_0\subsetneq b$ and $c_1\subsetneq b$, then by induction 
  $\psi(c_0)\geq \max(\phi_0(d^0_0),\phi_1(d^1_0))$ and
  $\psi(c_1)\geq \max(\phi_0(d^0_1),\phi_1(d^1_1))$, so we 
  can set $b_0=d^0_0\cup d^0_1$ and $b_1=d^1_0\cup d^1_1$.
  Then 
  \begin{align*}
    \psi(b)=&1+\max(\psi(c_0),\psi(c_1))\geq
      1+\max(\phi_0(d^0_0),\phi_0(d^0_1),\phi_1(d^1_0),\phi_1(d^1_0))\geq \\
            &\phi_0(d^0_0\cup d^0_1)
  \end{align*}
  (because of bigness of $\phi_0$), and analogously 
  $\psi(b)\geq \phi_1(d^1_0\cup d^1_1)$.
  This shows~\eqref{eq:wq53}.
  
  For compatible $p,q\in P$ we can define $p\wedge q$ by $(p\wedge
  q)(i)=p(i)\wedge q(i)$.
  This is the weakest condition stronger than both $p$ and $q$. 
  An immediate consequence of~\eqref{eq:join} is:  $p\incomp q$
  is equivalent to
  \begin{multline}\label{eq:wkljtq}
      (\exists n\in\omega)\, \val(p(n))\cap \val(q(n)))=\emptyset
      \text{ or }\\
      (\exists b\subseteq \omega \text{ infinite})\, (\exists M\in\omega)\,
      (\forall n\in b)\, \nor(p(n)\wedge q(n))<M^{k^*_n}.
  \end{multline}

  An obvious candidate for reconstructing the generic filter
  $G$ from the generic real $\n\eta$ 
 (that works with many tree-like forcings) would be the set
  \[
    H_0=\{p\in P:\, \n\eta \in\prod_{n\in\omega}\val(p(n))\}.
  \]
  However, due to the halving property of $P$, this fails miserably:
  There are incompatible conditions $q$ and $r$ with 
  $\val(q(n))=\val(r(n))$ for all $n$. More specifically, we get
  the following: For all $p$
  there is an $r\leq p$ such that
  \begin{equation}\label{eq:incomp}
    r\incomp \half(p),\text{ and }\val(r(n))\subseteq \val(\half(p)(n))\text{ for
    all }n.
  \end{equation}
  Proof: Set $q(n)=\half(p)$. Pick for all sufficiently large $n$ some
  $a_n\subseteq \val(q(n))$ such that $q(n)(a_n)=2$.
  Using the halving property, we can find for all $n$
  some $\phi_n\leq p(n)$ such that $\val(\phi_n)\subseteq a_n$
  and $\nor(\phi_n)>\nor(p(n))/2$. Set $r=(\phi_n)_{n\in\omega}$.
  Then $r$ and $q$ cannot be compatible, since 
  $q(n)(\val(r(n)))$ is bounded. This shows~\eqref{eq:incomp}.

  Back to the proof. First note the following:
  Fix $p\in P$. Let $X(p)$ be the set of all sequences 
  $\bar b=(b_n)_{n\in d}$ where $d$ is an
  infinite subset of $\omega$ and
  $b_n\subseteq \val(p(n))$ such that
  $\{\sqrt[\uproot{4}\leftroot{-3}k^*_n]{p(n)(b_n)}:\, n\in d\}$ is bounded.
  Fix some $\bar b\in X(b)$.
  Then $p$ forces that $\n\eta$ is not in the set 
  \begin{equation}\label{eq:gurke1}
    \n N_{p,\bar b}=\{\nu\in \prod_{n\in\omega}\val(p(n)):\, (\exists^\infty
    n\in d) \nu(n)\in b_n\}.
  \end{equation}
  Proof: Assume towards a contradiction that some  $p'\leq p$ 
  forces $\n\eta\in \n N_{p,\bar b}$.
  So there is a bound $M$ such that
  $p'(n)(b_n)<M^{k^*_n}$ for all $n\in d$. Fix $N(n)$ such that 
  $\nor(p'(l))>(n+1+M)^{k^*_l}>1+(n+M)^{k^*_l}$ for all $l>N(n)$. 
  For all $l>N(n)$ we get
  $p'(l)(\val(p'_l)\setminus b_l)> (n+M)^{k^*_l}$ (by bigness).
  Let $p''$ be the condition $p'(l)\restriction (\val(p'_l)\setminus b_l)$
  for $l\in (b\setminus N(0))$. Then $p''$ forces that $\n\eta\notin \n
  N_{p,\bar b}$, a contradiction. This shows~\eqref{eq:gurke1}.

  We claim that the following defines $G$:
  \begin{equation}\label{eq:hj35}
    H=H_0\cap 
    \{p\in P:\, 
    (\forall \bar b\in X(p)\cap V)\ \n\eta \notin \n N_{p,\bar b}\}.
  \end{equation}
  $H\supseteq G$ by \eqref{eq:gurke1}, so it is enough to show that
  all $p_1,p_2\in H$ are compatible.
  Set $b_n=\val(p_1(n))\cap \val(p_2(n))$. Note that $b_n$ is nonempty,
  since $p_1,p_2\in H_0$. So according to~\eqref{eq:wkljtq} we can
  assume towards a contradiction that the following holds (in $V$):
  \[
    (\exists b\subseteq \omega \text{ infinite})\, (\exists M\in\omega)\,
    (\forall n\in b)\, \nor(p(n)\wedge q(n))<M^{k^*_n}.
  \]
  According to~\eqref{eq:wq53}, we get 
  $c^1_n,c^2_n$ such that $c^1_n\cup c^2_n=b_n$
  and
  $p_i(n)(c^i_n)<M^{k^*_n}$ for $n\in b$ and $i\in\{0,1\}$.
  We assumed that $\n\eta\notin \n N_{p_1,\bar c^1}$, i.e.,
  $\n\eta(n)\in c^1_n$ for only finitely many $n$. The same is
  true for $c^2_n$, a contradiction. This shows~\eqref{eq:hj35}
  and therefore item~(1).

  Note that 
  to construct $G$ from $\n\eta$, we use the (complicated) set
  $(2^\omega)^V$; compare that with the much easier construction
  of $H_0$.

  Proof of (2).

  Let us assume towards a contradiction that 
  $P$ is equivalent to $P_I$. So it satisfies the 
  assumptions of Lemma~\ref{lem:gurkelem8}.
  Fix $p\in P'$, and set 
  $q=\half(p)$. Let $C$ be compact such that 
  \begin{equation}\label{eq:gurke553}
    0\neq\llbracket \n\eta\in C\rrbracket<q.
  \end{equation}
  Then $\prod_{n\in\omega} \val(q(n))\subseteq C$, since
  $C$ is closed. Let $r\leq p$ be incompatible to $q$
  such that $\val(r(n))\subseteq \val(q(n))$ as in~\eqref{eq:incomp}.
  Then $\prod_{n\in\omega} \val(r(n))\subseteq C$, therefore
  $r\forc \n\eta\in C$. So $r\leq^* q$ by~\eqref{eq:gurke553},
  which contradicts $r\incomp q$.
\end{proof}


\subsection{Counterexamples}\label{ss:nonnep}

Being nep is a property of the definition, not the forcing.  Of course 
we can find for any given proper forcing a definition which is not nep
(take any definition that is not upwards absolute).
For the same trivial reasons, a forcing ``absolutely equivalent'' to a nep
forcing doesn't have to be nep itself. For example:
\begin{Exm}\label{ex:densenotnep}
  There are upward absolute definitions of (trivial) forcings $P$, $Q$ s.t.  in
  $V$ and all candidates, $P$ is a dense suborder of $Q$, $P$ is nep but $Q$ is
  not nep.
\end{Exm}

\begin{proof}
  Pick $\mpar\in L\cap 2^\omega$ and a candidate $M_0$ that thinks
  $\mpar\notin L$.  Define $P=\{1,p_1,p_2\}$, $x\leq_P y$ iff $y=1$ or $x=y$.
  Set
  $Q=P\cup\{q_1,q_2\}$ and define the order on $Q$ by:
  $1\leq q_i\leq p_i$, and if $\mpar\in L$,
  then also $p_2\leq q_1$ and $p_1\leq q_2$.  These definitions are upwards
  absolute  and $P$ is nep.  However,
  $M_0\vDash\qb q_1\incomp q_2\qe$. But every $Q$-generic Filter over $V$
  contains $q_1$ and $q_2$, so there cannot be a $Q$-generic condition over
  $M_0$.
\end{proof}

If $Q$, $\leq$ and $\incomp$ are $\lS11$ and $Q$ is ccc, then $Q$ is Suslin
ccc, and therefore (transitive) nep.  (One of the reasons is that in the
$\lS11$-case it is absolute for countable antichains to be maximal.)
This is not true anymore if the definition of $Q$ is just $\lS12$:

\begin{Exm}\label{ex:cccnotnep}
  Let $Q$ be random forcing in $L$ ordered by inclusion, i.e.,
  \[
    Q=\{r\in L:\, r\text{ is a Borel-code for a non-null-set}\}.
  \]
  Then $p\in Q$ is $\lS12$ and
  $q\leq p$ and $p\incomp q$ are (relatively) Borel, and
  in $V$ and all candidates $Q$ is ccc.
  But $Q$ is not nep.
\end{Exm}

\begin{proof}
  Pick in $L$ a (transitive) candidate
  $M$ such that $M$ thinks that 
  $\om1^L$ (and therefore $Q$) is countable.
  In particular there is for each $n\in\omega$ 
  a maximal antichain $A_n$ in $M$
  such that $\mu(X_n)<1/n$ for $X_n=\bigcup_{a\in A_n}a$.
  (Of course $M$ thinks that $X_n$ is not in $L$. But really
  it is, simply because $M\subseteq L$.)
  Take any $q\in Q^V$, and pick $n$ such that $1/n<\mu(q)$.
  Then $q'=q\setminus X_n$ is positive and in $L$, and
  a generic filter containing $q'$ does not meet the antichain $A_n$.
\end{proof}

It is however not clear whether $Q$ could not have another definition that is
nep, or at least whether $Q$ is forcing-equivalent to a nep forcing.  If $L$ is
very small (or very large) in $V$, then $Q$ is Cohen (or random, respectively)
and thus equivalent to a nep forcing notion.  If $V'$ is an extension of $V=L$
by a random real, then in $V'$ the forcing $Q$ (which is ``random forcing in
$L$'') seems to be more complicated (it adds an unbounded real, but no Cohen).
We do not know whether in this case $Q$ is equivalent to a nep forcing.

\section{Countable support iterations}\label{sec:iteration}\label{sec:csi}

This section consists of three subsections:
\begin{enumerate}
  \item[\ref{ss:nonabs}] 
    We introduce the basic notation and preservation theorem.
    We get generic conditions for the limit, but not an
    upwards absolute definition of the forcing notion.
  \item[\ref{ss:absolute}]
    We introduce an equivalent definition of the iteration
    which is upwards absolute. So the limit is again nep.
  \item[\ref{ss:subsetiteration}]
    We modify the notions of Subsection~\ref{ss:nonabs}
    to subsets of the ordinals, and give a nice application.
\end{enumerate}

For this section, we fix a sequence $(Q_\alpha)_{\alpha\in \epsilon}$ of
forcing-definitions and a nep-parameter $\mpar$ coding the parameters
$(\mpar_\alpha)_{\alpha\in\epsilon}$, i.e., $\mpar$ is a nep-parameter with
domain $\epsilon$ and
$\mpar(\alpha)$ is the
nep-parameter used to define $Q_\alpha$
for each $\alpha\in\epsilon$.
(So we assume that the sequence of defining formulas
and parameters live in the ground model.)

To further simplify notation, we also assume that candidates are
successor-absolute, i.e., ``$\alpha$ is successor'' and
the function $\alpha\mapsto \alpha+1$ are absolute for all candidates.
\begin{Rem}
  This assumption 
  is not really necessary. Without it, we just have
  to use ``$M$ thinks that $\alpha=\zeta+1$'' instead of 
  just ``$\alpha=\zeta+1$'' in the definition of $G_\alpha^M$ etc., 
  similarly to~\ref{def:iterationalongsubsets}. 
\end{Rem}

Also, we assume the following (which could be replaced by weaker 
conditions, but is satisfied in practice anyway):
\begin{itemize}
  \item In every forcing extension of $V$, each $Q_\alpha$ is 
    normal nep (for $\ZFCx$ candidates).
  \item We only start constructions 
    with candidates $M$ 
    such that generic extensions $M[G]$ satisfy $\ZFCx$.\footnote{%
  Formally we can require that $M$ satisfies some stronger $\ZFC'$
  and that $\ZFC'$ proves 
  that every formula of $\ZFCx$ is forced by
  all countable support iterations of forcings of the form $Q_\alpha$.
  Also, we assume that ZFC proves that $H(\chi)$ satisfies $\ZFCx$
  for sufficiently large regular $\chi$, and that ZFC proves 
  that the defining formulas are absolute between $V$ and $H(\chi)$.}
\end{itemize}

\subsection{Properness without absoluteness}\label{ss:nonabs}

We use the following notation:
For any forcing notion, $q\leq^* p$ means $q\forc p\in G$.
\begin{Def}
  Let $M$ be a candidate.
  \begin{itemize}
    \item $P_\beta$ is the countable support iteration
      (in other terminology: the limit of)
      $(P_\alpha, Q_\alpha)_{\alpha\in\beta}$ (for all $\beta\leq \epsilon$).
      We use $G_\alpha$ to denote the $P_\alpha$-generic filter over $V$,
      and $G(\alpha)$ for the $Q_\alpha$-generic filter over $V[P_\alpha]$.
    \item $P^M_\beta$ is the element of $M$ so that $M$ thinks: $P^M_\beta$ is
      the countable support iteration of the sequence $(Q_\alpha)_{\alpha\in
      \beta}$ (for $\beta\in \epsilon\cap M$).
  \end{itemize}
\end{Def}

In certain $P_\epsilon$-extensions of $V$ the generic filter $G$
defines a canonical $P^M_\epsilon$-generic $G^M_\epsilon$ over $M$:
\begin{Def}\label{def:PMalpha}
  Given $G\subset P_\epsilon$, we define
  $G^M_{\alpha}$ by induction on $\alpha\in \epsilon\cap M$
  by using the following definition, provided it results in a
  $P^M_\alpha$-generic filter over $M$. In that case 
  we say ``$G$ is $(M,P_\alpha)$-generic''. Otherwise,
  $G^M_{\alpha}$  (and $G^M_{\beta}$ for all $\beta>\alpha$) are undefined.
  \begin{itemize}
    \item If $\alpha=\zeta+1$, then
      $G^M_\alpha$ consists of all 
      $p\in P^M_\alpha$ such that 
      $p\restriction \zeta\in G^M_\zeta$
      and $p(\zeta)[G^M_\zeta]\in G(\zeta)$.
    \item If $\alpha$ is a limit, then 
      $G^M_\alpha$ is the set of all $p\in P^M_\alpha$ such that
      $p\restriction \zeta\in G^M_\zeta$ for all $\zeta\in \alpha\cap M$.
  \end{itemize}
\end{Def}

\begin{Def}
  \begin{itemize}
    \item Assume that $G$ is  $(M,P_\alpha)$-generic and
      $\zeta\in\alpha\cap M$. Then we set
      $ G^M(\zeta)=
       \{\n q[G^M_\zeta]:\, (\exists p)\, p\cup (\zeta,\n q)\in G^M_{\zeta+1}\}
      $.
      I.e., $G^M(\zeta)$ is the usual $Q_\zeta^{M[G^M_\zeta]}$-generic
      filter over  $M[G^M_{\zeta}]$ as defined in $M[G^M_\alpha]$.
    \item $q$ is $(M,P_\alpha)$-generic means that
      $q\in P_\alpha$ forces that the $P_\alpha$-generic filter
      $G$ is $(M,P_\alpha)$-generic.
      If $p\in P^M_\alpha$ (or if $p$ is just a $P^M_{\alpha}$-name
      (in $M$)
      for some $P^M_\alpha$-condition),
      then $q$ is $(M,P_\alpha,p)$-generic,
      if $q$ additionally forces that $p\in G^M_\alpha$
      (or that $p[G^M_\alpha]\in G^M_\alpha$, resp.).
  \end{itemize}
\end{Def}

The following is an immediate consequence of the definition:
\begin{Facts}
  \begin{itemize}
    \item If $\zeta\in M\cap \alpha$, then
      $G^M(\zeta)=Q_\zeta^{M[G^M_\zeta]}\cap G(\zeta)$.
    \item If $q$ is  $(M,P_\alpha,p)$-generic and 
      $\zeta\in M\cap \alpha$, then $q\restriction\zeta$
      is $(M,P_\zeta,p\restriction\zeta)$-generic.
  \end{itemize}
\end{Facts}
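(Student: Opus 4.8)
The plan is to prove both items of the final $\mathbf{Facts}$ statement directly from the definitions of $G^M(\zeta)$ and $(M,P_\alpha,p)$-genericity, unwinding what the inductive definition of $G^M_\zeta$ and $G^M_{\zeta+1}$ gives us. Both assertions should follow by chasing definitions, with the main care needed in handling the restriction maps $p \mapsto p \restriction \zeta$ and the relationship between the generic filters at stages $\zeta$ and $\zeta+1$.

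Let me sketch both items.

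\medskip

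\emph{First item: $G^M(\zeta) = Q_\zeta^{M[G^M_\zeta]} \cap G(\zeta)$.}
First I would fix $\zeta \in M \cap \alpha$ and assume $G$ is $(M,P_\alpha)$-generic, so that $G^M_{\zeta+1}$ is defined and is $P^M_{\zeta+1}$-generic over $M$. By definition, $G^M(\zeta) = \{\n q[G^M_\zeta] : (\exists p)\ p \cup (\zeta, \n q) \in G^M_{\zeta+1}\}$. The inclusion $G^M(\zeta) \subseteq Q_\zeta^{M[G^M_\zeta]}$ is clear, since each $\n q$ is (in $M$) a $P^M_\zeta$-name for a condition in $Q_\zeta$, so its evaluation lands in $Q_\zeta^{M[G^M_\zeta]}$. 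For the remaining containment I would use the successor clause of Definition~\ref{def:PMalpha}: $p \cup (\zeta, \n q) \in G^M_{\zeta+1}$ holds exactly when $p \in G^M_\zeta$ and $\n q[G^M_\zeta] \in G(\zeta)$. Hence $\n q[G^M_\zeta] \in G^M(\zeta)$ iff there is $p \in G^M_\zeta$ with $\n q[G^M_\zeta] \in G(\zeta)$, i.e.\ iff $\n q[G^M_\zeta] \in Q_\zeta^{M[G^M_\zeta]} \cap G(\zeta)$ (using that $G^M_\zeta$ is a nonempty filter, so a witnessing $p$ always exists). This gives the equality.

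\medskip

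\emph{Second item: $q \restriction \zeta$ is $(M, P_\zeta, p \restriction \zeta)$-generic.}
Here I would assume $q$ is $(M,P_\alpha,p)$-generic and fix $\zeta \in M \cap \alpha$. The key observation is that genericity is inherited under restriction because the definition of $G^M_\zeta$ depends only on the coordinates below $\zeta$: if $G$ is a $P_\alpha$-generic filter and $G \restriction \zeta$ is the induced $P_\zeta$-generic filter, then the inductively-defined $G^M_\zeta$ computed from $G$ agrees with the one computed from $G \restriction \zeta$ (each clause of Definition~\ref{def:PMalpha} at a stage $\beta \le \zeta$ refers only to $G(\gamma)$ and $G^M_\gamma$ for $\gamma < \beta \le \zeta$). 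Thus $q$ forcing ``$G$ is $(M,P_\alpha)$-generic'' entails $q \restriction \zeta$ forces ``$G \restriction \zeta$ is $(M,P_\zeta)$-generic''. For the extra condition $p \restriction \zeta \in G^M_\zeta$: since $q$ forces $p \in G^M_\alpha$, and $G^M_\alpha$ is a filter on $P^M_\alpha$ whose members all satisfy the limit/successor clauses tying $r \in G^M_\alpha$ to $r \restriction \zeta \in G^M_\zeta$ for $\zeta \in \alpha \cap M$, we get that $q$ forces $p \restriction \zeta \in G^M_\zeta$, so $q \restriction \zeta$ does as well.

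\medskip

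\emph{The main obstacle} is making the ``$G^M_\zeta$ computed from $G$ equals $G^M_\zeta$ computed from $G \restriction \zeta$'' claim fully rigorous through the limit stages, where one must verify that the cofinal-support structure below $\zeta$ is genuinely unaffected by coordinates $\ge \zeta$; this is where one leans on the inductive definition being ``local'' in the support. Once that locality is established, both items reduce to reading off the successor and limit clauses, so I expect the overall argument to be short — essentially the ``immediate consequence of the definition'' the statement advertises.
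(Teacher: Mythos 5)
Your proof is correct and matches the paper exactly: the paper states these Facts without any proof, calling them ``an immediate consequence of the definition,'' and your unwinding of the successor clause of Definition~\ref{def:PMalpha} for the first item, together with the locality of the inductive definition of $G^M_\zeta$ (plus the standard amalgamation $q' \cup q\restriction[\zeta,\alpha)$ behind the restriction-of-forcing step) for the second, is precisely that intended argument. The one step you gloss in the reverse inclusion of the first item --- upgrading an arbitrary name $\n q$ with $\n q[G^M_\zeta]=x$ to a legitimate $P^M_\zeta$-name for a member of $Q_\zeta$ by mixing inside $M$ below a condition $p\in G^M_\zeta$ forcing $\n q\in Q_\zeta$ --- is routine and well within the tolerance the paper itself adopts.
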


$G^M_{\alpha}$ is absolute in the following sense:
\begin{Lem}\label{fct:GMtrans}
  Assume that $M,N$ are candidates in $V$, $M\in N$, $V'$ is an extension
  of $V$, $\alpha\in M\cap \epsilon$, and
  $G\subset P_\alpha$ is an element of $V'$ which is
  $(N,P_\alpha)$-generic.
  \begin{enumerate}
    \item $G^{M}_\alpha$ (in $V'$) is the same as
       $G^{N}_\alpha)^{M}_\alpha$ (in $N[G^N_\alpha]$). 
       In other words,
       the $P^M_\alpha$-filter
       calculated in $V'$ from $G$
       is the same as the $P^M_\alpha$-filter
       calculated in $N[G^N_\alpha]$ from $G^N_\alpha$.
    \item In particular,
      $G_\alpha$ is $(M,P_\alpha)$-generic iff $N$ thinks that 
      $G^{N}_\alpha$ is $(M,P_\alpha)$-generic.
    \item If $G$ is $(M,P_\alpha)$-generic and $\tau$ a $P^M_\alpha$-name
      (in $M$), then
      ``$x=\tau[G^{M}_\alpha]$'' is absolute between $N[G^{N}_\alpha]$ and
      $V'$.
  \end{enumerate}
\end{Lem}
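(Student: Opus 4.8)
The plan is to prove (1) and (2) together by induction on $\alpha\in M\cap\epsilon$, and then to read off (3). The reason for coupling (1) and (2) is that the recursion defining $G^M_\alpha$ in~\ref{def:PMalpha} is \emph{partial}: at each stage one forms a tentative set and retains it only ``provided it results in a $P^M_\alpha$-generic filter over $M$''. So the statement I would actually induct on is: for every $\zeta\le\alpha$ in $M\cap\epsilon$, the stage-$\zeta$ computation performed from $G$ in $V'$ has the same outcome---both whether the ``provided'' clause succeeds and, if so, which filter results---as the computation of $(G^N_\zeta)^M_\zeta$ performed from $G^N_\zeta$ inside $N[G^N_\zeta]$. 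Granting this, (1) and (2) are immediate.

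For the successor step $\alpha=\zeta+1$, the induction hypothesis gives $G^M_\zeta=(G^N_\zeta)^M_\zeta$, so for $p\in P^M_\alpha$ the value $c\DEFEQ p(\zeta)[G^M_\zeta]$ is literally the same object in $N[G^N_\zeta]$ and in $V'$: this is the absoluteness of name-evaluation for ord-transitive models, Fact~\ref{lem:evalabs}, applied to the candidate $M$ (with the $M$-generic filter $G^M_\zeta$) sitting inside the ord-transitive model $N[G^N_\zeta]$. The only remaining point is that the two membership tests---$c\in G(\zeta)$ on the $V'$ side and $c\in G^N(\zeta)$ on the $N$ side---agree. Now $c\in Q^{M[G^M_\zeta]}_\zeta$; since $M[G^M_\zeta]$ is again a candidate (Theorem~\ref{thm:forcing} and Lemma~\ref{lem:basiccandidates}(3)) and is a candidate \emph{in} $N[G^N_\zeta]$, upward absoluteness of $\inQ$ gives $c\in Q^{N[G^N_\zeta]}_\zeta$. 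Using the relation $G^N(\zeta)=Q^{N[G^N_\zeta]}_\zeta\cap G(\zeta)$ noted just before the lemma, we conclude $c\in G^N(\zeta)$ iff $c\in G(\zeta)$, so the successor clause defines the same set in both models.

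At a limit $\alpha$ the tentative set $\{p\in P^M_\alpha:\ p\restriction\zeta\in G^M_\zeta\text{ for all }\zeta\in\alpha\cap M\}$ is assembled from $P^M_\alpha$, the set $\alpha\cap M$, and the filters $G^M_\zeta$, all of which lie in (and agree between) $N[G^N_\alpha]$ and $V'$ by the induction hypothesis; hence the tentative set is the same on both sides. It then remains to see that the ``provided it is $P^M_\alpha$-generic over $M$'' clause is absolute. But the relevant dense sets are exactly the $D\in M$ that $M$ regards as dense in $P^M_\alpha$; these belong to $M\in N$ and so are shared, and ``the tentative set is a filter meeting each such $D$'' refers only to the already-shared tentative set and the shared $D$, so it is absolute between $N[G^N_\alpha]$ and $V'$. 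Thus the ``provided'' clause succeeds in $V'$ iff it succeeds in $N[G^N_\alpha]$, and when it succeeds the filters coincide; this establishes (1) and (2).

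Finally, (3) follows from (1): once $G$ is $(M,P_\alpha)$-generic, $G^M_\alpha$ is the same object in $N[G^N_\alpha]$ and in $V'$, and $\tau[G^M_\alpha]$ is simply the evaluation of the name $\tau\in M$ by this filter in the ord-transitive model $M$; the last clause of Fact~\ref{lem:evalabs} (now with $N[G^N_\alpha]$ as the outer ord-transitive model and $V'$ as the ambient universe) makes this value absolute between $N[G^N_\alpha]$ and $V'$. The step I expect to be most delicate is threading the partiality of Definition~\ref{def:PMalpha} through the induction without circularity---one has to show the ``provided'' clause is absolute \emph{before} asserting that the filters agree, which is precisely why (1) and (2) must be run simultaneously---together with the side verification that each $M[G^M_\zeta]$ really is a candidate in $N[G^N_\zeta]$, since that is what licenses invoking upward absoluteness of $\inQ$.
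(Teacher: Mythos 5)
Your proof is correct and follows essentially the same route as the paper's: induction on $\alpha\in\epsilon\cap M$, with the successor step resting on the absoluteness of name evaluation (Fact~\ref{lem:evalabs}), the upward absoluteness of $\inQ$ for the candidate $M[G^M_\zeta]$ inside $N[G^N_\zeta]$, and the identity $G^N(\zeta)=Q_\zeta^{N[G^N_\zeta]}\cap G(\zeta)$, with the limit step immediate and (2), (3) read off from (1). Your explicit treatment of the partiality of Definition~\ref{def:PMalpha} (showing the ``provided'' clause is itself absolute) is a point the paper leaves implicit, but it is an elaboration of the same argument rather than a different approach.
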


\begin{proof}
  By induction on  $\alpha\in \epsilon\cap M$: (2) follows from (1)
  by definition, and (3) from (1) using~\ref{lem:evalabs}.

  Assume $\alpha=\zeta+1$. Then $p\in G^{M}_\alpha$ iff 
  $p\restriction\zeta\in  G^{M}_\zeta$ and 
  $p(\zeta)[G^{M}_\zeta]\in G(\zeta)$ 
  iff $N[G^N_\zeta]\vDash p\restriction\zeta\in  G^{M}_\zeta$
  (by induction hypothesis 1)
  and $N[G^N_\zeta]\vDash p(\zeta)[G^{M}_\zeta]\in G^N(\zeta)$
  (by induction hypothesis 3
  and the fact that $M[G^{M}_\zeta]\vDash q\in Q_\zeta$ implies
  $N[G^N_\zeta]\vDash q\in Q_\zeta$).

  Now assume $\alpha$ is a limit. Then $p\in G^{M}_\alpha$ iff ($p\restriction\zeta \in
  G^{M}_\zeta$ for all $\zeta$) iff ($N\vDash p\restriction\zeta \in G^{M}_\zeta$ for all $\zeta$)  (by induction hypothesis 1) iff
  $N\vDash p\in G^{M}_\alpha$.
\end{proof}

So here we use that $Q_\alpha$ is upwards absolutely defined in $V[G_\alpha]$,
and that $M[G^{M}_\zeta]\in N[G^{N}_\zeta]$ both are candidates.

The definitions are compatible with ord-collapses of elementary submodels:
\begin{Lem}\label{lem:pmcompwithesm}
  Let $N\esm H(\chi)$, $M=\ordcol(N)$, and let
  $G$ be $P_\alpha$-generic over $V$. Then
  \begin{enumerate}
    \item
      $G$ is $N$-generic iff it is $(M,P_\alpha)$-generic.
  \setcounter{tmp}{\value{enumi}}	
  \end{enumerate}
  If $G$ is $N$-generic and $p,\n\tau\in N$, then 
  \begin{enumerate}
  \setcounter{enumi}{\value{tmp}} 
    \item
      $p\in G$ iff $\ordcol^{N}(p)\in G^{M}_\alpha$,
    \item
      $\ordcol^{N[G_\alpha]}(\n\tau[G])=(\ordcol^{N}(\n\tau))[G^{M}_\alpha]$;
    \item in particular, $(M[G^{M}_\alpha], M,\in)$
      is the ord-collapse of $(N[G_\alpha],N,\in)$.
  \end{enumerate}
\end{Lem}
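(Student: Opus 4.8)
The plan is to fix $i\DEFEQ\ordcol^N\colon N\to M$ (an $\in$-isomorphism by Fact~\ref{lem:ordcol}) and prove all four items by a single induction on $\alpha\in\epsilon\cap N$. Since the iteration is definable from $\mpar$ and $\alpha\in N$, we have $P_\alpha\in N$ and $i(P_\alpha)=P^M_\alpha$, so $P^M_\alpha=i[P_\alpha\cap N]$ and every element of $P^M_\alpha$ is $i(q)$ for a unique $q\in P_\alpha\cap N$. The heart of the matter is the single equality $G^M_\alpha=i[G\cap P_\alpha\cap N]$, which is items (1) and (2) in disguise: as $i$ is injective, $i(q)\in G^M_\alpha$ iff $q\in G\cap P_\alpha\cap N$ iff $q\in G$. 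Once this equality is available I would read off (3) and (4) from the previously proved Lemma that the ord-collapse commutes with forcing extensions (applicable because $P_\alpha\in N$ and $G$ is $N$- and $V$-generic): that Lemma produces an ord-collapse $I$ of $N[G_\alpha]$ extending $i$ with $I(\ntau[G])=(i(\ntau))[i[G]]$ and makes $i[G]$ (meaning $i[G\cap P_\alpha\cap N]$) an $M$-generic filter; substituting $i[G]=G^M_\alpha$ gives (3) and (4), while the biconditional "$G$ is $N$-generic iff $i[G]$ is $M$-generic'' of that same Lemma, matched level by level against the recursion of Definition~\ref{def:PMalpha}, gives (1).

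For the inclusion $i[G\cap P_\alpha\cap N]\subseteq G^M_\alpha$ I would follow the recursion of Definition~\ref{def:PMalpha}. Let $q\in G\cap P_\alpha\cap N$. The limit clause is immediate from the induction hypothesis: $q\restriction\zeta\in G_\zeta$ for all $\zeta$, hence $i(q)\restriction\zeta=i(q\restriction\zeta)\in G^M_\zeta$ for all $\zeta\in\alpha\cap N=\alpha\cap M$. The successor clause $\alpha=\zeta+1$ is the one genuine computation: I must see that $q(\zeta)[G_\zeta]\in G(\zeta)$ transfers to $i(q)(\zeta)[G^M_\zeta]\in G(\zeta)$. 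By Fact~\ref{thm:transforcing}, $N[G_\zeta]\esm H^{V[G_\zeta]}(\chi)$, and by the induction hypothesis (item (4) at $\zeta$) its ord-collapse is $J_\zeta=\ordcol^{N[G_\zeta]}\colon N[G_\zeta]\to M[G^M_\zeta]$, extending $i$. Item (3) at $\zeta$ gives $i(q)(\zeta)[G^M_\zeta]=J_\zeta(q(\zeta)[G_\zeta])$. Applying the commuting Lemma one level up — to the nep forcing $Q_\zeta$ over $V[G_\zeta]$ and the elementary submodel $N[G_\zeta]$ — the pushforward $J_\zeta[G(\zeta)]$ is the $M[G^M_\zeta]$-generic filter, which (as recorded just after Definition~\ref{def:PMalpha}) equals $G^M(\zeta)=Q_\zeta^{M[G^M_\zeta]}\cap G(\zeta)$. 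Hence $q(\zeta)[G_\zeta]\in G(\zeta)$ iff $J_\zeta(q(\zeta)[G_\zeta])\in G^M(\zeta)\subseteq G(\zeta)$, i.e.\ $i(q)(\zeta)[G^M_\zeta]\in G(\zeta)$, placing $i(q)$ in $G^M_\alpha$.

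For the reverse inclusion I would avoid recomputing at limits and use maximality instead. By the commuting Lemma $H\DEFEQ i[G\cap P_\alpha\cap N]$ is an $M$-generic filter on $P^M_\alpha$, and by the previous paragraph $H\subseteq G^M_\alpha$. Now take $p=i(q)\in G^M_\alpha$ and apply $M$-genericity of $H$ to the set $D=\{s\in P^M_\alpha\colon s\leq^M p\text{ or }s\incomp^M p\}\in M$: a witness $s\in H\cap D$ with $s\leq^M p$ gives $p\in H$ by upward closure, whereas a witness with $s\incomp^M p$ is impossible, since $s\in H\subseteq G^M_\alpha$ and $p\in G^M_\alpha$ are $M$-compatible. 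Here I use that $G^M_\alpha$ is a genuine filter over $M$, which holds because $M\vDash\ZFCx$ and, via the forcing theorem for ord-transitive models (Theorem~\ref{thm:forcing}) together with Lemma~\ref{fct:GMtrans}, the reconstruction in Definition~\ref{def:PMalpha} from the honest iterand-generics $G^M(\zeta)$ is $M$-internally the generic filter of $M$'s own countable support iteration. Hence $H=G^M_\alpha$, and (1)--(4) follow as in the first paragraph.

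The step I expect to be the main obstacle is precisely this last point: the directedness of $G^M_\alpha$ at a limit $\alpha$ of cofinality $\omega$, equivalently the classical coherence fact that for $q\in P_\alpha\cap N$ the membership $q\in G$ is already determined by $\{q\restriction\zeta\colon\zeta\in\alpha\cap N\}\subseteq\bigcup_\zeta G_\zeta$. At cofinality $\omega$ one cannot bound the support of $q$ strictly below $\sup(\alpha\cap N)$, so a naive coordinatewise common-extension construction need not terminate; the filterhood of $G^M_\alpha$ must instead be extracted from $M$'s internal iteration theory, which is exactly what Theorem~\ref{thm:forcing} and Lemma~\ref{fct:GMtrans} are for. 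Everything else — the elementarity bookkeeping, the identities $i(q)\restriction\zeta=i(q\restriction\zeta)$ and $i(q)(\zeta)=i(q(\zeta))$, and the identification of the various genericity and evaluation notions — is routine.
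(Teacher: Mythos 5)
Your top-level plan is essentially sound, and most of it matches the paper: the equality $G^M_\alpha=i[G\cap P_\alpha\cap N]$ is indeed the heart of the matter, your forward inclusion is the paper's argument (successor step via induction hypotheses (2),(3) plus the fact that elements of $Q_\zeta\subseteq\hco$ are fixed by the collapse of $N[G_\zeta]$; limit step immediate), and reading off (3),(4) from the unnamed collapse-commutes-with-forcing lemma of Section~\ref{sec:nontransitive} is a legitimate reorganization (the paper instead reproves (3) by induction on the depth of $\n\tau$, using a maximal antichain in $N$ deciding whether $\n\tau$ is forced to be a standard name, as required by the modified evaluation of Definition~\ref{def:evaluation}).

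The genuine gap is your reverse inclusion. The maximality argument needs that any two elements of the \emph{recursively defined set} $G^M_\alpha$ are $M$-compatible, and your justification via Theorem~\ref{thm:forcing} and Lemma~\ref{fct:GMtrans} is circular: both can only be invoked once one already has an $M$-generic filter ($M[G^M_\alpha]$ and ``$M$'s internal iteration theory'' make sense only then, and Lemma~\ref{fct:GMtrans} moreover requires an ambient \emph{candidate} $N\ni M$ already known to be $(N,P_\alpha)$-generic, whereas your $N$ is an elementary submodel whose genericity at stage $\alpha$ is exactly what is in question). Worse, filterhood of $G^M_\alpha$ is \emph{equivalent} to the reverse inclusion: if $p'=i(p)\in G^M_\alpha\setminus H$, then $N$-genericity applied to the dense set $\{t:\, t\leq p \text{ or } t\incomp p\}\in N$ produces $t\in G\cap N$ with $t\incomp p$, and then $i(t)\in H$ and $p'$ are two $M$-incompatible members of $G^M_\alpha$ --- so your argument assumes precisely what it proves. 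The correct route is the paper's: prove the level-by-level biconditional $p\in G \IFF i(p)\in G^M_\alpha$, whose only nontrivial ingredient is the coherence fact you flagged --- if $\dom(p)\subseteq\alpha\cap N$ and $p\restriction\zeta\in G_\zeta$ for all $\zeta\in\alpha\cap N$, then $p\in G$. But this is a standard, directly provable fact about countable support iterations, not something to extract from $M$'s internal theory: apply $V$-genericity to $D_p=\{q:\, q\leq p\text{ or }(\exists\zeta)\; q\restriction\zeta\incomp p\restriction\zeta\}$, whose density one shows as follows. Given $r$, either some extension of some $r\restriction\zeta$ is incompatible with $p\restriction\zeta$ (extend $r$ accordingly), or else $r\restriction\zeta\forc p\restriction\zeta\in G_\zeta$ for every $\zeta$, in which case $r\restriction\zeta\forc r(\zeta)\leq^* p(\zeta)$, so one can choose, coordinate by coordinate, a $P_\zeta$-name $q(\zeta)$ forced below both $r(\zeta)$ and $p(\zeta)$; the countable support limit accepts the resulting $q\leq r,p$. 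Your worry that ``a naive coordinatewise common-extension construction need not terminate'' does not apply, because one refines \emph{names} below the single condition $r$, not conditions drawn from $G$; there is nothing to bound below $\sup(\alpha\cap N)$. With this fact in hand, the reverse inclusion is the same routine induction as your forward one, which is exactly how the paper's proof runs (it uses the coherence fact silently in the limit case of (1,2)).
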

\begin{proof}
  The image of $x$ under the ord-collapse (of the appropriate model, i.e.,
  either $N$ or $N[G]$) is denoted by $x'$. Induction on $\alpha$:

  (1,2 successor, $\alpha=\zeta+1$.)
  Assume that $G\restriction P_\zeta =:G_\zeta$ is $P_\zeta$-generic over $N$.
  Fix $p\in P_\alpha\cap N$. Then
  $p\in G$ iff
  $\{\ p\restriction \zeta\in G_\zeta$ and $p(\zeta)[G_\zeta]\in G(\zeta)\ \}$
  iff $\{ p'\restriction \zeta\in G^{M}_\zeta$ (according to induction
  hypothesis (2))
  and $p'(\zeta)[G^{M}_\zeta]\in G(\zeta)$ (according to 
  induction hypothesis (3)) $\}$%
  \footnote{using the fact that
    that $p(\zeta)[G_\zeta]\in Q_\zeta$ is hereditarily countable
    modulo ordinals and therefore not changed by the collapse} 
  iff $p'\in G^{M}_\alpha$.

  (1,2 limit)
  Assume that $G_\zeta$ is $P_\zeta$-generic over $N$ for all $\zeta\in \alpha\cap N$.
  Fix $p\in P_\alpha\cap N$.
  $p\in G$ iff
  $\{\ p\restriction \zeta\in G_\zeta$ for all $\zeta\in\alpha\cap N\ \}$ iff
  $p'\in G^{M}_\zeta$ for all $\zeta\in\alpha\cap M\ \}$ (by hypothesis (2))
  iff $p'\in G^M_\alpha$.

  (3) Induction on the depth of the name $\n\tau$:
  \\
  Let $A\in N$ be a  maximal antichain deciding whether $\n\tau \in V$
  (and if so, also the value of $\n\tau$).
  Assume $a\in A\cap G\cap N$. If $a$ forces $\n\tau=\std x$ for $x\in
  V$,
  then $M$ thinks that $a'\in G^{M}_\alpha$ forces $\n\tau'$ to be $x'\in V$,
  so we get $\n\tau'[G^{M}_\alpha]=x'$.
  If $a$ forces that $\n\tau\notin V$, then
  \[
    \n\tau'[G^{M}_\alpha]=\{\n\sigma'[G^{M}_\alpha]:\, (\sigma,p)\in \tau\cap N, p'\in G^{M}_\alpha\}
  =\{(\n\sigma[G])':\, (\sigma,p)\in \tau\cap N, p\in G\}
  \]
  (by induction).
  It remains to be shown that this is the ord-collapse of
  $\n\tau[G]=\{\n\sigma[G]:\, \sigma\in \tau\}$. For this it is
  enough to note 
  that for all $\rho[G]\in \n \tau[G]\cap N[G]$ 
  there is a $(\n \sigma,p)\in\n\tau$ such that $p\in G$ and
  $\n\sigma[G]=\rho[G]$.
\end{proof}

$P_\alpha$ satisfies (a version of) the properness condition for 
candidates:

\begin{Lem}\label{lem:csiisproper}
  For every candidate $M$ and $p\in P^M_\alpha$ there
  is an $(M,P_\alpha,p)$-generic $q$ such that 
  $\dom(q)\subseteq M\cap \alpha$.
\end{Lem}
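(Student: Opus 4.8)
The plan is to prove the lemma by induction on $\alpha$, but in order to make the limit step close up I would actually prove the following two-parameter strengthening, by induction on $\gamma\in\epsilon\cap M$: for every $\beta\le\gamma$ with $\beta\in M$ or $\beta=0$, every $(M,P_\beta)$-generic $q_\beta$ with $\dom(q_\beta)\subseteq M\cap\beta$, and every $P_\beta$-name $\n p$ such that $q_\beta$ forces $\n p\in P^M_\gamma$ and $\n p\restriction\beta\in G^M_\beta$, there is an $(M,P_\gamma)$-generic $q$ with $q\restriction\beta=q_\beta$, $\dom(q)\subseteq M\cap\gamma$, and $q\forc \n p\in G^M_\gamma$. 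The lemma is the instance $\beta=0$, $\gamma=\alpha$, $\n p=p$. The case $\gamma=\beta$ is trivial. The point of carrying the lower parameter $\beta$ is that it lets me treat a tail segment $[\beta,\gamma)$ uniformly, which is exactly what the limit construction needs.

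For the successor case $\gamma=\zeta+1$ (with $\beta\le\zeta$, since $\beta=\gamma$ is trivial): first I would apply the inductive statement at $\zeta$ to $q_\beta$ and the name $\n p\restriction\zeta$, obtaining an $(M,P_\zeta)$-generic $q_\zeta$ extending $q_\beta$ with $\dom(q_\zeta)\subseteq M\cap\zeta$ and $q_\zeta\forc \n p\restriction\zeta\in G^M_\zeta$. Now passing to $V[G_\zeta]$ with $q_\zeta\in G_\zeta$: by Theorem~\ref{thm:forcing} the forcing extension $M[G^M_\zeta]$ is again a (successor-absolute) candidate, $Q_\zeta$ is normal nep there by assumption, and $\n p(\zeta)[G^M_\zeta]\in Q_\zeta^{M[G^M_\zeta]}$. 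The nep property (Definition~\ref{def:nep}) then yields an $M[G^M_\zeta]$-generic $r\le \n p(\zeta)[G^M_\zeta]$; by mixing I choose a $P_\zeta$-name $\n r$ below $q_\zeta$ for such an $r$ and set $q=q_\zeta\cup\{(\zeta,\n r)\}$. Then $\dom(q)\subseteq M\cap\gamma$, and $q$ is $(M,P_\gamma)$-generic with $q\forc\n p\in G^M_\gamma$, reading off directly from the successor clause of Definition~\ref{def:PMalpha}.

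The limit case is where the work lies. Since $M$ is successor-absolute, $M\cap\gamma$ has no maximum (a maximum $\eta$ would force $\eta+1\in M\cap\gamma$), so $\sup(M\cap\gamma)$ has cofinality $\omega$; I fix an increasing sequence $\langle\beta_n\rangle_{n\in\omega}$ cofinal in $M\cap\gamma$ with $\beta_0=\beta$, and enumerate as $\langle D_k\rangle_{k\in\omega}$ all $D\in M$ that $M$ thinks are dense in $P^M_\gamma$. I then build $q_n\in P_{\beta_n}$ and $P_{\beta_n}$-names $\n p_n$ with $q_0=q_\beta$, $\n p_0=\n p$, such that each $q_n$ is $(M,P_{\beta_n})$-generic with $\dom(q_n)\subseteq M\cap\beta_n$, $q_{n+1}\restriction\beta_n=q_n$, $q_n\forc\n p_n\restriction\beta_n\in G^M_{\beta_n}$, $q_{n+1}\forc\n p_{n+1}\le\n p_n$, and $q_{n+1}\forc\n p_{n+1}\in D_n$. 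At stage $n$ I pass to $V[G_{\beta_n}]$, use that $M[G^M_{\beta_n}]$ is a candidate thinking $D_n$ is dense to pick (inside it) a $p'\le\n p_n[G^M_{\beta_n}]$ in $D_n$, name it $\n p_{n+1}$, and then apply the inductive hypothesis $S(\beta_{n+1})$ with lower parameter $\beta_n$, condition $q_n$, and name $\n p_{n+1}$ (legitimate since $\beta_{n+1}<\gamma$) to extend $q_n$ to the required $q_{n+1}$.

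Finally I set $q=\bigcup_n q_n$. Its domain is contained in $M\cap\gamma$ and is countable, the pieces cohere, so $q\in P_\gamma$ with $q\restriction\beta_n=q_n$; in particular $q\restriction\beta=q_\beta$. For genericity, given $\zeta\in M\cap\gamma$ choose $n$ with $\zeta<\beta_n$; then $q\restriction\beta_n=q_n$ is $(M,P_{\beta_n})$-generic, so its restriction to $\zeta$ is $(M,P_\zeta)$-generic, hence $G^M_\zeta$ is well-defined and the limit clause of $G^M_\gamma$ applies. To see $G^M_\gamma$ meets $D_k$, note that for each $\zeta\in M\cap\gamma$, picking $m>k+1$ with $\zeta<\beta_m$, we have $q_m\forc\n p_m\restriction\beta_m\in G^M_{\beta_m}$ and $\n p_m\le\n p_{k+1}$, so by upward closure $\n p_{k+1}\restriction\zeta\in G^M_\zeta$; thus $q\forc\n p_{k+1}\in G^M_\gamma$, and $\n p_{k+1}\in D_k$. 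The same argument with $\n p_0=\n p$ gives $q\forc\n p\in G^M_\gamma$. The main obstacle I anticipate is precisely this limit verification: arranging the bookkeeping so that the decreasing names $\n p_n$ have their heads driven into the generic at cofinally many stages, and then checking that the countable-support fusion $q=\bigcup_n q_n$ is a genuine condition that forces each $\n p_{k+1}$ into the coordinatewise-defined $G^M_\gamma$; the per-stage extension itself is routine once the two-parameter induction hypothesis and the nep absorption at successors are in place.
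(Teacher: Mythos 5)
Your two-parameter strengthening is exactly the paper's route: it is the Induction Lemma~\ref{ind:gurke}, where your pair $(\beta,\gamma)$ with the name $\n p$ is the paper's $p\in P^M_{\beta/\alpha}$, the successor step applies the nep property of $Q_\zeta$ to the candidate $M[G^M_\zeta]$ in $V[G_\zeta]$, and the limit step is the same fusion along an increasing $\omega$-sequence cofinal in $M\cap\gamma$ with a decreasing sequence of names driven through the enumerated dense sets. (One cosmetic difference: the paper fixes choice functions $\gen_\zeta$ rather than using mixing ad hoc, but that is only needed later, in Subsection~\ref{ss:absolute}, to make the construction canonical; for this lemma your mixing is fine.)

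There is, however, one genuine omission in your limit verification. By Definition~\ref{def:PMalpha}, ``$G$ is $(M,P_\gamma)$-generic'' requires that the coordinatewise-defined $G^M_\gamma$ be a $P^M_\gamma$-generic \emph{filter} over $M$, and by Definition~\ref{def:MQgen} that includes directedness: in particular no two conditions that $M$ thinks are incompatible may both belong to $G^M_\gamma$. You verify that the intermediate $G^M_\zeta$ are defined, that $G^M_\gamma$ meets each $D_k$, and that $q\forc\n p\in G^M_\gamma$ --- but not the filter property. This is not automatic at limits: membership in $G^M_\gamma$ is tested initial segment by initial segment, and $r\restriction\zeta,s\restriction\zeta\in G^M_\zeta$ for all $\zeta\in M\cap\gamma$ does not imply that $r,s$ are compatible in $P^M_\gamma$ (the witnessing conditions need not cohere at the limit). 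The point is emphasized in the paper precisely because in the nep setting $\incomp$ is not upwards absolute, so ``$q$ forces every dense $D\in M$ to meet $G$'' is strictly weaker than ``$q$ is $M$-generic''.

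The fix is the density argument that forms the second bullet of the paper's limit verification: for $r,s$ incompatible in $P^M_\gamma$ in the sense of $M$, the set
\[
  D=\{p\in P^M_\gamma:\, (\exists \zeta<\gamma)(\exists t\in\{r,s\})\
  p\restriction\zeta\forc_\zeta t\notin G_\zeta\}
\]
is dense in $M$; any $p\in D\cap G^M_\gamma$ has $p\restriction\zeta\in G^M_\zeta$, which excludes the corresponding $t$ from $G^M_\gamma$. Since your construction already arranges that $G^M_\gamma$ meets every dense set of $M$, adding this observation closes the gap, and with it your proof coincides with the paper's. (At the successor step the same issue is hidden in your ``reading off directly from the successor clause''; there the paper settles it by the maximal-antichain computation with $A'=\{a(\zeta):\, a\in A,\ a\restriction\zeta\in G^M_\zeta\}$, showing each maximal antichain of $M$ meets $G^M_{\zeta+1}$ in exactly one point --- a short argument, but worth spelling out for the same reason.)
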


The proof is more or less the same as the iterability of properness given in
\cite{MR1234283}. Since we will later need a ``canonical'' version of the
proof, we will introduce the following notation:

\begin{Def} For $\alpha<\epsilon$, let
  $\gen_\alpha$ be a $P_\alpha$-name for a
  function such that the following is forced by $P_\alpha$:
  If $M$ is a candidate, $\sigma:\omega\to M$ surjective, and
  $p\in Q^M_\alpha$ then $\gen_\alpha(M,\sigma,p)$
  is an $M$-generic element of $Q^{V[G_\alpha]}$
  stronger than $p$.
\end{Def}

(It is clear that such functions exist, since we assume that 
$P_\alpha$ forces that $Q_\alpha$ is nep. Later we will assume
that we can pick $\gen_\alpha$ in some 
absolute way, cf. \ref{asm:absolgen}).

For $\alpha\leq \beta<\epsilon$, let $P_{\beta/\alpha}$ denote the set of
$P_\alpha$-names $p$ for elements of $P_\beta$ such that $P_\alpha$ forces
$p\restriction\alpha\in G_\alpha$.  (I.e., $P_{\beta/\alpha}$ is 
a $P_\alpha$-name for the quotient forcing.) As usual,
we can define the $M$-version: $p\in P_{\beta/\alpha}^M$
means that $p$ is a $P^M_\alpha$-name (in $M$) for a $P^M_\beta$-condition,
and if $G$ is $(M,P_\alpha)$-generic, then
$p[G^M_\alpha]\restriction \alpha\in G^M_\alpha$.

Lemma \ref{lem:csiisproper} is a special case of the
following:
\begin{inductionlemma}\label{ind:gurke}
  Assume that $M$ is a candidate, $\sigma:\omega\to M$ surjective,
  $\alpha,\beta\in M$, $\alpha\leq \beta\leq \epsilon$, 
  $p\in P_{\beta/\alpha}^M$, $q$ is $(M,P_\alpha)$-generic,
  and that $\dom(q)\subseteq \alpha\cap M$. We define the canonical 
  $(M,\sigma,P_\beta,p)$-extension $q^+$ of $q$ such that
  $q^+\in P_\beta$ and $q^+$ is $(M,P_\beta,p)$-generic and
  $\dom(q^+)\subseteq M\cap\beta$.
\end{inductionlemma}

\begin{proof}
  Induction on $\beta\in M$.

  Successor step $\beta=\zeta+1$: By induction we have the
  canonical $(M,\sigma,P_\zeta,p\restriction \zeta)$-extension 
  $q^+\in P_\zeta$. In particular,
  $q^+$ forces that $M'\DEFEQ M[G^M_\zeta]$ is a candidate and that
  $p'\DEFEQ p(\zeta)[G^M_\zeta]\in Q_\zeta^{M'}$. 
  By applying $\sigma$ to the $P^M_\zeta$-names in $M$, we get a canonical
  surjection $\sigma':\omega\to M'$.  We define the canonical
  $\beta$-extension $q^{++}$ to be $q^+\cup{(\zeta,\gen_\zeta(M',\sigma',p'))}$.
  Assume that $G_\beta$ contains $q^{++}$. 
  Then  $G^M_\zeta$ is $P^M_\zeta$-generic
  and contains $p\restriction\zeta$. If $A\subseteq P^M_\beta$ is (in $M$)
  a maximal antichain, then
  \[
    A'\DEFEQ\{a(\zeta):\, a\in A,\, a\restriction\zeta
    \in G^M_\zeta\}\subseteq Q_\zeta^{M[G^M_\zeta]}
  \]
  is a maximal antichain
  in $M[G^M_\zeta]$. Since $\gen_\zeta(M',p')$ is in $G(\zeta)$, there
  is exactly one $a'\in A'\cap G(\zeta)$, i.e., there is exactly one
  $a\in A\cap G^M_\beta$. So $q^{++}$ is really $(M,P_\beta,p)$-generic.
  
  Limit step: Assume $\alpha=\alpha_0<\alpha_1\dots$ is cofinal in $M\cap
  \beta$.
  Set
  $D_0=P^M_\beta$ and let  $(D_n)_{n\in\omega}$ enumerate the $P^M_\beta$-dense
  subset in $M$. (Note that we get this enumeration canonically from $\sigma$.)
  First we define $(p_n)_{n\in\omega}$ such that $p_0=p$, $p_n\in P^M_{\beta/\alpha_n}$,
  and ($M$ thinks that) $P^M_{\alpha_n}$ forces
  \begin{itemize}
    \item $p_n\in D_n$,
    \item $p_{n-1}\restriction \alpha_n\in
      G^M_{\alpha_n}$ implies $p_n\leq p_{n-1}$.
  \end{itemize}
  Then we pick $q=q_{-1}\subseteq q_0 \subseteq q_1\dots$ such that $q_{n}$
  is the canonical $(M,\sigma,P_{\alpha_{n+1}},p_n\restriction \alpha_{n+1})$-generic
  extension of the $(M,P_{\alpha_{n}})$-generic $q_n$.
  We set $q^+\DEFEQ \bigcup_{n\in\omega} q_n$.

  By induction we get:
  \begin{itemize}
    \item $q_n$ is $(M,P_{\alpha_{n+1}},p_m\restriction\alpha_{n+1})$-generic
      for all $m\leq n$.
    \item $q_n$ forces 
      $p_l[G^M_{\alpha_{n+1}}]\leq p_m[G^M_{\alpha_{n+1}}]$
      (in $P^M_\beta$) for $m\leq l\leq n+1$.
  \end{itemize}
  $q^+$ is $G^M_\beta$ is $(M,P_\beta,p)$-generic: Let $G$ be $V$-generic and
  contain $q^+$.
  \begin{itemize} 
     \item $G^M_\beta$ meets $D_n$: $p_n[G^M_\beta]\in G^M_\beta$,
       since $p_n[G^M_\beta]\restriction \alpha_{m+1}\in G^M_\beta $ for all
       $m\geq n$.
     \item Let $r,s$ be incompatible in
       $P^M_\beta$. 
       In $M$, the set 
       \[
         D=\{p\in P^M_\beta:\, (\exists \zeta<\beta)(\exists t\in\{r,s\})
         p\restriction\zeta\forc_\zeta t\notin G_\zeta\}\subseteq P^M_\beta
       \]
       is dense, and if
       $p\in D\cap G^M_\beta$, then $p\restriction\zeta\in G^M_\zeta$,
       so $t\restriction\zeta\notin G^M_\zeta$, and $t\notin G^M_\beta$. 
  \end{itemize}
\end{proof}

We repeat Lemma \ref{lem:csiisproper} with our new notation:
\begin{Cor}
  Given a candidate $M$, $\sigma:\omega\to M$ surjective,
  and $p\in P^M_\alpha$, we can define
  the canonical generic $q=\gen(M,\sigma,P_\alpha,p)$.
  Also, $\dom(q)\subseteq M\cap \alpha$.
\end{Cor}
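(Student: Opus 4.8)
The plan is to read this Corollary off the Induction Lemma/Definition~\ref{ind:gurke} as the base case of the iteration, namely the instance in which the starting stage is $0$. In the notation of~\ref{ind:gurke} I would set $\beta\DEFEQ\alpha$ (the $\alpha$ of the Corollary), take the starting index to be $0$, let $p$ be the given condition, and let the input $q$ be the empty condition $\emptyset$. The canonical object $\gen(M,\sigma,P_\alpha,p)$ is then simply the canonical $(M,\sigma,P_\alpha,p)$-extension of $\emptyset$ produced by~\ref{ind:gurke}.

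First I would verify the hypotheses of~\ref{ind:gurke} for this instance. We have $\alpha\in M$ (this is implicit in $p\in P^M_\alpha$, since $P^M_\alpha$ is only defined for $\alpha\in\epsilon\cap M$), and trivially $0\leq\alpha\leq\epsilon$. Since $P_0$ is the trivial forcing, the quotient $P_{\alpha/0}$ is canonically identified with $P_\alpha$, so $P^M_{\alpha/0}=P^M_\alpha$ and the given $p$ lies in $P^M_{\alpha/0}$. The empty condition $\emptyset$ is vacuously $(M,P_0)$-generic: by the limit clause of~\ref{def:PMalpha} with $\alpha\cap M=\emptyset$, the filter $G^M_0=P^M_0=\{\emptyset\}$ is the trivial (hence generic) filter, so the unique condition $\emptyset\in P_0$ forces this; and $\dom(\emptyset)=\emptyset\subseteq 0\cap M$.

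With the hypotheses checked,~\ref{ind:gurke} directly yields an element $q^+\in P_\alpha$ that is $(M,P_\alpha,p)$-generic and satisfies $\dom(q^+)\subseteq M\cap\alpha$. Defining $\gen(M,\sigma,P_\alpha,p)\DEFEQ q^+$ gives exactly the assertion of the Corollary; canonicity is inherited from the explicit recursive construction of $q^+$ in the proof of~\ref{ind:gurke}, which depends only on $M$, $\sigma$, and $p$.

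There is essentially no obstacle here, since the entire content lies in the Induction Lemma and this Corollary is only the repackaging of its $\beta=\alpha$, start-at-$0$ instance. The single point that warrants explicit checking is that the empty condition genuinely satisfies the stage-$0$ genericity hypothesis and that the degenerate quotient $P_{\alpha/0}$ coincides with $P_\alpha$; both are immediate from the triviality of $P_0$.
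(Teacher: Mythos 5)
Your proposal is correct and coincides with the paper's own (implicit) argument: the paper states the Corollary as a mere restatement of Lemma~\ref{lem:csiisproper} in the $\gen$-notation, and Lemma~\ref{lem:csiisproper} is explicitly declared a special case of Induction Lemma~\ref{ind:gurke}, namely the instance you describe with starting stage $0$, $q=\emptyset$, and $P^M_{\alpha/0}$ identified with $P^M_\alpha$. Your explicit verification that $\emptyset$ is $(M,P_0)$-generic and that the degenerate quotient collapses correctly is exactly the routine checking the paper leaves tacit.
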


So $P_\alpha$ satisfies the properness-clause of the nep-definition.
However, $P_\alpha$ is not nep, since the statement ``$p\in P_\alpha$'' is not
upwards absolute.

\begin{Rem}
  There are two obvious reasons why ``$p\in P_\alpha$'' is not upwards absolute:
  First of all, names look entirely different in various
  candidates. For example, if $M$ thinks that 
  $\n\tau$ is the standard name for $\om1$, then a bigger candidate
  $N$ will generally see that $\n\tau$ is not a standard name for $\om1$.
  So if $Q$ is the (trivial) forcing $\{\om1\}$, then a condition
  in $P\ast Q$ is a pair $(p,q)$, where $P$ is (essentially) a standard
  $P$-name for $\om1$. So if $M$ thinks that $(p,q)\in P\ast Q$, then
  $N$ (or $V$) will generally not think that $(p,q)\in P\ast Q$.
  So we cannot use the formula ``$p\in P_\alpha$'' directly. We will use 
  pairs $(M,p)$ instead, where $M$ is a candidate and $p\in P^M_\alpha$.
  Another way to circumvent this problem would be
  to use absolute names for $\hco$-objects
  (inductively defined, starting with, e.g., ``standard name for $\alpha$'',
  and allowing ``name for union of $(x_i)_{i\in\omega}$'' etc).
  The second reason is that forcing
  is generally not absolute (even when we use absolute names): 
  $M$ can wrongly think that $p$ forces that $\n q_2\leq \n q_1$,
  i.e., that $(p,\n q_2)\leq (p,\n q_1)$ in $P\ast \n Q$.
  We will avoid this by interpreting 
  $(M,p)$ to be a {\em canonical} $(M,P_\alpha,p)$-generic.
\end{Rem}

\subsection{The nep iteration: properness and absoluteness.}\label{ss:absolute}

Now we will construct a version of $P_\epsilon$ that is forcing equivalent to
the usual countable support iteration and upwards absolutely defined.
We will need to construct generics in a canonical  way, so we assume the
following:
 
\begin{Asm}\label{asm:absolgen}
    There is an absolute (definition of a) 
    function $\gen_\alpha$
    such that $P_\alpha$ forces:
    If $M$ is a candidate, 
    $\sigma: \omega\to M$ surjective
    and
    $p\in Q^ M_\alpha$
    then $\gen_\alpha(M,\sigma ,p)$ is $Q^M_\alpha$-generic
    over $M$ and stronger than $p$.
\end{Asm}

\begin{Def}
  $P^\textrm{nep}_\alpha$ consists of tuples $(M,\sigma,p)$, where $M$ is
  a candidate, $\sigma:\omega\to M$  surjective, and $p\in P^M_\alpha$.
\end{Def}

So ``$x\in P_\alpha^\textrm{nep}$'' obviously is upwards absolute.

We will interpret $(M,\sigma,p)$ as 
the ``canonical $M$-generic condition forcing that $p\in G^M_\alpha$''.
(Generally there are many generic conditions, and incompatible ones, so we have
to single out a specific one, the canonical generic, and for this we need
\ref{asm:absolgen}).

Recall the construction of $\gen$ from Definition/Lemma~\ref{ind:gurke}.
If we use Assumption~\ref{asm:absolgen}, we get:
\begin{Cor}\label{lem:genprops}
  $\gen: P^\textrm{nep}_\alpha\to P_\alpha$ is such that
  \begin{enumerate}
    \item $\gen(M,\sigma,p)$ is $(M,P_\alpha,p)$-generic
    \item If $M,N$ are candidates, $M,\sigma\in N$, and $G$ is
      $(N,P_\alpha)$-generic, then
      $\gen^{N}(M,\sigma,p)\in G^{N}_\alpha$ iff $\gen^{V}(M,\sigma,p)\in G$.
  \end{enumerate}
\end{Cor}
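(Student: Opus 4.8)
The plan is to read both parts off the recursive construction of $\gen$ given in the Induction Lemma/Definition~\ref{ind:gurke}, the only genuinely new ingredient being the absolutely defined single-step generic supplied by Assumption~\ref{asm:absolgen}.

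Part~(1) is immediate: I would instantiate~\ref{ind:gurke} with $\alpha=0$, so that $q$ is the trivial condition with empty domain (vacuously $(M,P_0)$-generic), and with $\beta$ the target ordinal. The canonical extension $q^+$ produced there is by definition $\gen(M,\sigma,p)$, and~\ref{ind:gurke} asserts precisely that it is $(M,P_\beta,p)$-generic, with $\dom(q^+)\subseteq M\cap\beta$. So~(1) merely restates the conclusion of~\ref{ind:gurke}.

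For part~(2) I would argue by induction on $\alpha\in N\cap\epsilon$, showing that the recursion defining $\gen^{N}(M,\sigma,p)$ inside $N$ and the recursion defining $\gen^{V}(M,\sigma,p)$ in $V$ run in lockstep under the filter correspondence of Lemma~\ref{fct:GMtrans}. The key observation is that every operation occurring in the recursion is absolute between $N[G^{N}_\zeta]$ and $V$. At a successor step $\alpha=\zeta+1$ the construction appends the single-step generic $\gen_\zeta(M',\sigma',p')$, where $M'=M[G^{M}_\zeta]$, $p'=p(\zeta)[G^{M}_\zeta]$, and $\sigma'$ is the surjection induced by $\sigma$; here $M'$, $p'$ and $\sigma'$ are obtained by evaluating names, which is absolute by Lemma~\ref{fct:GMtrans}(3), while $\gen_\zeta$ is absolutely defined by Assumption~\ref{asm:absolgen}, so the appended value agrees whether the relevant name is read off via $G^{N}_\zeta$ in $N$ or via $G$ in $V$. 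At a limit step the entire bookkeeping---the cofinal sequence $(\alpha_n)$, the enumeration $(D_n)$ of the dense subsets of $M$, and the fusion sequence $(p_n)$---is computed canonically from $\sigma$ inside $M$, hence is literally the same object in both universes. Feeding these facts, together with the induction hypothesis and the identification of $G^{M}_\zeta$ across $N[G^{N}_\zeta]$ and $V$ from Lemma~\ref{fct:GMtrans}(1), into the two clauses of Definition~\ref{def:PMalpha} yields $\gen^{N}(M,\sigma,p)\in G^{N}_\alpha$ iff $\gen^{V}(M,\sigma,p)\in G$.

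The step I expect to be the main obstacle is verifying that the limit-stage construction of $q^+$ really is a \emph{deterministic} function of $(M,\sigma,p)$---that both the names $p_n\in P^{M}_{\beta/\alpha_n}$ and the conditions $q_n$ extending them are pinned down uniquely---so that running the recursion in $N$ and in $V$ cannot diverge. This is exactly where Assumption~\ref{asm:absolgen} is indispensable: since in general there are many, mutually incompatible, $M$-generic conditions over each $Q_\zeta$, without an absolutely defined canonical choice $\gen_\zeta$ the successor steps might select different witnesses in $N$ and in $V$, destroying the commutation. Once every choice in the recursion has been fixed absolutely, the induction is routine and~(2) follows.
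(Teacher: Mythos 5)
Your proposal is correct and follows essentially the same route as the paper, whose entire proof reads ``(1) is clear; for (2), go through the construction of~\ref{ind:gurke} again and check by induction that it is sufficiently canonical, i.e., absolute'' --- your lockstep induction, with Lemma~\ref{fct:GMtrans} handling evaluation of names, Assumption~\ref{asm:absolgen} pinning down the successor-step generic, and the $\sigma$-driven bookkeeping fixing the limit stage, is exactly the verification the paper leaves to the reader. You also correctly identify the crux (determinism of the limit-stage choices, which is precisely why $\sigma$ and the absolute $\gen_\zeta$ are carried through the construction), so there is nothing to add.
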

(Here, $\gen^{N}$ is the result of the construction~\ref{ind:gurke}
carried out inside $N$, and analogously for $V$.)
Of course $\gen$ cannot really be upwards absolute (i.e., we cannot have
$\gen^{N}(M,\sigma,p)=\gen^{V}(M,\sigma,p)$), since $x\in P_\alpha$ is not
upwards absolute.  However, (2) gives us a sufficient amount of absoluteness.

\begin{proof}
  (1) is clear. For (2), just go through the construction of~\ref{ind:gurke}
  again and check by induction 
  that this construction is really sufficiently ``canonical'', i.e.,
  absolute.
\end{proof}

If $\sigma_1\neq \sigma_2$ both enumerate $M$, then we do not require
$\gen(M,\sigma_1,p)$ and $\gen(M,\sigma_2,p)$ to be compatible.

Let us first note that a function $\gen$ as above also satisfies the following:

\begin{Cor}\label{cor:gencor}
  \begin{enumerate}
    \item If $N$ thinks that $q\leq^*\gen^{N}(M,\sigma_M,p)$,
      then $\gen^V(N,\sigma_N,q)\leq^*\gen^V(M,\sigma_M,p)$.
    \item If $N\esm H(\chi)$, $p\in N$, and $(N',p')$ is the ord-collapse
      of $(N,p)$, and $\sigma': \omega\to N'$ is surjective,
      then $\gen(N',\sigma',p')\leq^* p$.
  \end{enumerate}
\end{Cor}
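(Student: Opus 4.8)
The plan is to prove both parts in the same way: unwind the abbreviation $q\leq^* r$ (which by definition means $q\forc r\in G$), fix an arbitrary $V$-generic $G\subseteq P_\alpha$ lying below the condition asserted to be the lower bound, and then chase membership of the relevant conditions through the induced filters $G^N_\alpha$, $G^{N'}_\alpha$ using Corollary~\ref{lem:genprops} and Lemma~\ref{lem:pmcompwithesm}. No new combinatorics should be needed; everything ought to reduce to the transfer statements already established.

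For part~(1), first I would record that $M,\sigma_M,p$ all lie in $N$ (they must, since $N$ computes $\gen^N(M,\sigma_M,p)$), and since $N$ is ord-transitive and $M\notin\ON$ we get $M\subseteq N$, so in fact $p\in M\subseteq N$ as well; thus Corollary~\ref{lem:genprops} applies to the pair $M,N$. Now I would take any $V$-generic $G$ with $\gen^V(N,\sigma_N,q)\in G$. By~\ref{lem:genprops}(1) this condition is $(N,P_\alpha,q)$-generic, so $G$ is $(N,P_\alpha)$-generic and $q\in G^N_\alpha$. Since $G^N_\alpha$ is $P^N_\alpha$-generic over $N$, contains $q$, and $N$ thinks that $q$ forces the generic filter to contain $\gen^N(M,\sigma_M,p)$, the forcing theorem for ord-transitive models (Corollary~\ref{cor:pforc}, applied inside $N$) gives $\gen^N(M,\sigma_M,p)\in G^N_\alpha$. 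Finally, as $M,\sigma_M\in N$ and $G$ is $(N,P_\alpha)$-generic, the absoluteness clause~\ref{lem:genprops}(2) converts this into $\gen^V(M,\sigma_M,p)\in G$. Since $G$ was arbitrary, $\gen^V(N,\sigma_N,q)\forc\gen^V(M,\sigma_M,p)\in G$, which is exactly $\gen^V(N,\sigma_N,q)\leq^*\gen^V(M,\sigma_M,p)$.

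For part~(2), by elementarity $N\vDash p\in P_\alpha$, hence after ord-collapse $p'=\ordcol^N(p)\in P^{N'}_\alpha$ (the ord-collapse being an $\in$-isomorphism), so that $\gen(N',\sigma',p')$ is defined. I would then take any $V$-generic $G$ with $\gen(N',\sigma',p')\in G$; by~\ref{lem:genprops}(1) this condition is $(N',P_\alpha,p')$-generic, so $G$ is $(N',P_\alpha)$-generic and $p'\in G^{N'}_\alpha$. Now I invoke Lemma~\ref{lem:pmcompwithesm} with $M=N'=\ordcol(N)$: its clause~(1) turns $(N',P_\alpha)$-genericity of $G$ into $N$-genericity, and then clause~(2), applied to the parameter $p\in N$, gives $p\in G\iff\ordcol^N(p)=p'\in G^{N'}_\alpha$. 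Since $p'\in G^{N'}_\alpha$ we obtain $p\in G$, and as $G$ was arbitrary this yields $\gen(N',\sigma',p')\leq^* p$.

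The step I expect to be the only real subtlety is the bookkeeping across levels: one must be sure that the hypothesis ``$q\leq^*\gen^N(M,\sigma_M,p)$ in $N$'' is genuinely read as a $P^N_\alpha$-forcing statement internal to $N$, so that it can be activated through the $N$-generic filter $G^N_\alpha$ via Corollary~\ref{cor:pforc}, and that the absoluteness clause~\ref{lem:genprops}(2) is applied with the filter $G$ that is actually $(N,P_\alpha)$-generic. Once these alignments are pinned down, both parts are immediate consequences of the transfer lemmas, with no remaining calculation.
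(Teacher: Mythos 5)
Your proposal is correct and takes essentially the same route as the paper's own proof: in both parts you fix a $V$-generic $G$ containing the claimed stronger condition and transfer membership through the induced filter $G^N_\alpha$ (resp.\ $G^{N'}_\alpha$) using \ref{lem:genprops} and \ref{lem:pmcompwithesm}, exactly as the paper does. The only difference is that you spell out, via the forcing theorem for ord-transitive models (Corollary~\ref{cor:pforc} applied inside $N$), the step the paper compresses into ``$G^N_\alpha$ contains $q$ and therefore $\gen^{N}(M,\sigma_M,p)$'', which is a harmless (indeed helpful) expansion rather than a deviation.
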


\begin{proof}
  (1)
  Assume $G_\alpha$ contains $\gen^V(N,\sigma_N,q)$. So
  $G_\alpha$ is $(N,P_\alpha)$-generic and $G^N_\alpha$ contains
  $q$ and therefore
  $\gen^N(M,\sigma_M,p)$. So by \ref{lem:genprops}(2),
  $G_\alpha$ contains $\gen^V(M,\sigma_M,p)$.

  (2)
  Assume that the $V$-generic filter $G$ contains $\gen(N',\sigma',p')$.
  Then by definition, $G^{N'}$ is $N'$-generic and contains $p'$.
  So $G$ is $N$-generic and contains $p$,
  according to~\ref{lem:pmcompwithesm}.
\end{proof}

Now we can define: 
\begin{Def}
  $(M_2,\sigma_2,p_2)\leq^\textrm{nep} (M_1,\sigma_1,p_1)$ means: 
  $M_1,\sigma_1\in M_2$, and $M_2$ thinks that 
  ($M_1$ is a candidate and that)
  $p_2\leq^* \gen^{M_2}(M_1,\sigma_1,p_1)$.
\end{Def}

By Corollary~\ref{cor:gencor}(1) $\leq^\textrm{nep}$ is transitive. 
It follows:
\begin{Thm}
  \begin{enumerate}
    \item $\gen: (P^\textrm{nep}_\alpha,\leq^\textrm{nep})\to (P_\alpha,\leq^*)$
      is a dense embedding.
    \item $(P^\textrm{nep}_\alpha,\leq^\textrm{nep})$ is nep.
  \end{enumerate}
\end{Thm}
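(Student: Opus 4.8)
The plan is to verify for (1) the three conditions defining a dense embedding --- order-preservation, density of the range, and preservation of incompatibility --- and then to deduce (2) from (1) together with the generic-filter correspondence in Corollary~\ref{lem:genprops}. Order-preservation is immediate from Corollary~\ref{cor:gencor}(1): if $(M_2,\sigma_2,p_2)\leq^\textrm{nep}(M_1,\sigma_1,p_1)$ then by definition $M_2$ thinks $p_2\leq^*\gen^{M_2}(M_1,\sigma_1,p_1)$, which is exactly the hypothesis of that corollary and yields $\gen(M_2,\sigma_2,p_2)\leq^*\gen(M_1,\sigma_1,p_1)$. Density is Corollary~\ref{cor:gencor}(2): given $p\in P_\alpha$ choose $N\esm H(\chi)$ with $p,\mpar\in N$, let $(N',p')=\ordcol(N,p)$ and any surjection $\sigma'\colon\omega\to N'$; then $(N',\sigma',p')\in P^\textrm{nep}_\alpha$ and $\gen(N',\sigma',p')\leq^* p$.

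The substantial part of (1) is incompatibility-preservation, which I would establish in contrapositive form: if $\gen(M_1,\sigma_1,p_1)$ and $\gen(M_2,\sigma_2,p_2)$ admit a common $\leq^*$-lower bound $r$, I must find a common $\leq^\textrm{nep}$-lower bound. Fix $N\esm H(\chi)$ containing $r$, both triples and $\mpar$, and let $i\colon N\to N'=\ordcol(N)$ be the ord-collapse, so $N'$ is a candidate by Lemma~\ref{lem:basiccandidates}(1). The key point is that $i$ fixes the inner data: since $N$ thinks $M_j$ is countable it contains a surjection $\omega\to M_j$, whence $M_j\subseteq N$, and an $\in$-induction using ord-transitivity of $M_j$ shows $i$ is the identity on $M_j$ and $i(M_j)=M_j$; similarly $i$ fixes $\sigma_j$ (a set of pairs from $\omega\times M_j\subseteq N$) and $p_j\in M_j$. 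Now $r\leq^*\gen(M_j,\sigma_j,p_j)$ holds in $V$, so by elementarity $N\vDash r\leq^*\gen^N(M_j,\sigma_j,p_j)$, and transporting along the isomorphism $i$ gives $N'\vDash i(r)\leq^*\gen^{N'}(M_j,\sigma_j,p_j)$; with $\sigma^\ast=i\circ\sigma_N$ for a surjection $\sigma_N\colon\omega\to N$ this says precisely $(N',\sigma^\ast,i(r))\leq^\textrm{nep}(M_j,\sigma_j,p_j)$ for $j=1,2$, as wanted.

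For (2) the forcing is upward absolutely defined, as noted after the definition of $P^\textrm{nep}_\alpha$: membership ``$(M,\sigma,p)\in P^\textrm{nep}_\alpha$'' and the relation $\leq^\textrm{nep}$ are evaluated inside the candidate (together with the absolute side-conditions ``$M_1,\sigma_1\in M_2$'' and ``$M_1$ is a candidate''), hence upward absolute for candidates; transitivity of $\leq^\textrm{nep}$ already followed from Corollary~\ref{cor:gencor}(1), every condition lies in $\hco$, so normality is clear. For the properness clause I would prove the following claim: for any candidate $K$, surjection $\sigma_K\colon\omega\to K$ and $r\in P^K_\alpha$, the condition $(K,\sigma_K,r)$ is $K$-generic. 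Granting it, for $(M,\sigma,p)\in(P^\textrm{nep}_\alpha)^K$ the condition $q\DEFEQ(K,\sigma_K,\gen^K(M,\sigma,p))$ satisfies $q\leq^\textrm{nep}(M,\sigma,p)$ (reflexivity of $\leq^*$ inside $K$) and is $K$-generic, which is exactly what Definition~\ref{def:nep} demands.

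To prove the claim I would force with $P^\textrm{nep}_\alpha$ below $(K,\sigma_K,r)$ and transport the generic $G^\textrm{nep}$ through the dense embedding of~(1) to the induced $P_\alpha$-generic $G$, so that $\gen(K,\sigma_K,r)\in G$; since this condition is $(K,P_\alpha)$-generic (Corollary~\ref{lem:genprops}(1)), $G^K_\alpha$ is $P^K_\alpha$-generic over $K$. Given $D\in K$ dense in $(P^\textrm{nep}_\alpha)^K$, I apply (1) \emph{inside} $K$ (this is where $\ZFCx$ must be strong enough to prove~(1)) so that $\gen^K$ is there a dense embedding; then $\gen^K[D]$ is predense in $P^K_\alpha$, so $G^K_\alpha$ meets it, say $\gen^K(d)\in G^K_\alpha$ with $d=(M',\sigma',p')\in D\cap K$. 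By Corollary~\ref{lem:genprops}(2) (with $M',\sigma'\in K$ and $G$ being $(K,P_\alpha)$-generic) this is equivalent to $\gen(d)\in G$, i.e.\ $d\in G^\textrm{nep}$; and $d$ is a genuine $P^\textrm{nep}_\alpha$-condition by the upward absoluteness above. Hence $G^\textrm{nep}\cap D\cap K\neq\emptyset$, so $G^\textrm{nep}$ is $(P^\textrm{nep}_\alpha)^K$-generic over $K$, proving the claim and with it~(2). The main obstacle is the incompatibility step of~(1): one must check that the ord-collapse of the chosen $N$ fixes all of $M_j,\sigma_j,p_j$ (which rests on $M_j\subseteq N$ and ord-transitivity) so that the collapsed triple really refines both originals; the analogous delicate point in~(2) is that~(1) is available internally to $K$ and that Corollary~\ref{lem:genprops}(2) correctly matches $K$-internal with $V$-external membership in the generic filters.
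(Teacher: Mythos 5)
Your proof is correct and follows essentially the same route as the paper: order-preservation and density via Corollary~\ref{cor:gencor}, incompatibility-preservation by ord-collapsing an elementary submodel $N\esm H(\chi)$ containing a common lower bound (the paper leaves implicit the detail you verify, namely that the collapse fixes $M_j,\sigma_j,p_j$ because $M_j\subseteq N$ is ord-transitive), and for (2) the pull-back of the generic filter through $\gen^K$ applied inside the candidate, matched to the external filter via Corollary~\ref{lem:genprops}(2). No gaps; your parenthetical that $\ZFCx$ must prove (1) internally is exactly the tacit assumption the paper also makes.
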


\begin{proof}
  (1)
  \begin{itemize}
    \item If $(M_2,\sigma_2,p_2)\leq^\textrm{nep} (M_1,\sigma_1,p_1)$, then
      $\gen(M_2,\sigma_2,p_2)\leq^*\gen(M_1,\sigma_1,p_1)$
      by~\ref{cor:gencor}(1).
    \item If $(M_2,\sigma_2,p_2)\incomp^\textrm{nep} (M_1,\sigma_1,p_1)$, then
      $\gen(M_2,\sigma_2,p_2)\incomp^*\gen(M_1,\sigma_1,p_1)$:\\
      Assume that $q\leq^* \gen(M_i,\sigma_i,p_i)$ ($i\in\{1,2\}$).
      Let $N\esm H(\chi)$ contain 
      $q$ and $M_i,\sigma_i,p_i$, and let
      $(M_3,p_3)$ be the ord-collapse of $(N,q)$ and $\sigma_3:\omega\to M_3$
      surjective.
      Then $(M_3,\sigma_3,p_3)\leq^\textrm{nep}(M_i,\sigma_i,p_i)$.
    \item $\gen$ is dense:
      For $p\in P_\alpha$ pick an $N\esm H(\chi)$ containing $p$ and
      let $(N',p')$ be the ord-collapse of $(N,p)$. Then
      $\gen(N',p')\leq^* p$, according to  \ref{cor:gencor}.2.
  \end{itemize}

  (2): The definitions of $P^\textrm{nep}$ and $\leq^\textrm{nep}$ are
  clearly upwards absolute. If $N$ is a candidate and $N$
  thinks $(M,p)\in P^\textrm{nep}$, then 
  $(N,\gen^{N}(M,p))\leq^\textrm{nep}(M,p)$ is $N$-generic:

  Assume $G^\textrm{nep}$ is a $P^\textrm{nep}$-generic filter
  over $V$ containing $(N,q)$ (for some $q$). Since
  $\gen$ is a dense embedding, $G^\textrm{nep}$ defines a 
  $P_\alpha$-generic filter $G_\alpha$ over $V$, and $G_\alpha$
  contains $\gen(N,q)$. This implies 
  that $G_\alpha$ is $(N,P_\alpha)$-generic.

  We have to show that 
  $G^\textrm{neq}\cap P^{\textrm{nep},N}$ is $P^{\textrm{nep},N}$-generic over $N$.
  In $N$, the mapping $\gen^N: P^{\textrm{nep},N}\to P^N_\alpha$ is dense,
  and $G^N_\alpha$ is $P^N_\alpha$-generic over $N$. So the 
  set $G'=\{(M,p):\, \gen^N(M,p)\in G^N_\alpha \}$ is  
  $P^{\textrm{nep},N}$-generic over $N$. But $(M,p)\in G'$ iff
  $(M,p)\in G$, according to \ref{lem:genprops}(2).
\end{proof}

\begin{Rem}
  So the properties \ref{lem:genprops}(1) and \ref{cor:gencor}(1) are enough
  to show that $P^\textrm{nep}$ can be densely embedded into $P_\alpha$.
  But \ref{lem:genprops}(2) is needed to show that $P^\textrm{nep}$ actually
  is nep: Otherwise $P^\textrm{nep}$ still is an upwards absolute forcing 
  definition, and for every $p\in (P^\textrm{nep})^M$ there is a $q\leq p$
  in $P^\textrm{nep}$ forcing that there is an $(P^\textrm{nep})^M$-generic
  filter over $M$, namely the reverse image of $G^M_\alpha$ under
  $\gen^M$
  but this filter doesn't have to be the same as $G^\textrm{nep}\cap
  (P^\textrm{nep})^M$.
\end{Rem}

\subsection{Iterations along subsets of $\epsilon$}\label{ss:subsetiteration}

As before we assume that $(Q_\alpha)_{\alpha\in\epsilon}$ is 
a sequence of forcing-definitions.

We can of course define a countable support iteration along
every subset $w$ of $\epsilon$:

$P_w$, the c.s.-iteration of
$(Q_\alpha)_{\alpha\in w}$ along $w$, is defined by induction on $\alpha\in w$:
$P_{w\cap \alpha}$ consists of functions $p$ with countable domain $\subseteq
w\cap \alpha$.  If $\alpha$ is the $w$-successor of $\zeta$, then $p\in
P_{w\cap\alpha}$ iff $p\restriction\zeta\in P_{w\cap\zeta}$ and
$\zeta\notin\dom(p)$ or $p(\zeta)$ is a $P_{w\cap \zeta}$-name for for an
object in $Q_\alpha$.  If $\alpha$ is a $w$-limit, then $p\in
P_{w\cap\alpha}$ iff $p\restriction\zeta\in P_{w\cap\zeta}$ for all
$\zeta\in\alpha\cap w$.

Of course this notion does not bring anything new: Assume $\beta\leq\epsilon$
is the order type of $w$, and let $i: \beta\to w$ be the isomorphism.
Then $P_w$ is isomorphic to the c.s.-iteration
$(R_\alpha,Q_{i(\alpha)})_{\alpha<\beta}$.

We can calculate $P_w$ inside $M$ and extend our notation to that case:

\begin{Def}\label{def:iterationalongsubsets}
   Let $M$ be a candidate, $w\subseteq \epsilon$, $w\in M$.
  \begin{itemize}
    \item $P_w$ is the 
      countable support iteration along the order $w$.
    \item $P^M_w$ is the forcing $P_w$ as constructed in $M$.
    \item $v$ covers $w$ (with respect to
      $M$) if $\epsilon\supseteq v \supseteq w\cap M$.\\
      (If $w\notin\ON$, then this is independent of $M$, since $w\subseteq M$
      for each candidate $M$.)
    \item If $v$ covers $w$, and $G_v\subseteq P_v$, then we define $G^M_{v\to
      w}$ by the following 
      induction on $\alpha\in\epsilon\cap M$, provided this
      results in a $P^M_w$-generic
      filter over $M$. Otherwise, $G^M_{v\to w}$ is undefined.
      Let $p\in P^M_{\cap w\cap \alpha}$.
      If $M$ thinks that $w\cap \alpha$ has no last element, then
      $p\in G^M_{v\to w\cap \alpha}$ iff $p\restriction \beta\in G^M_{v\to w\cap \beta}$
      for all $\beta\in \alpha\cap M$. If $w\cap \alpha$ has the last element $\beta$,
      then $p\in G^M_{v\to w\cap \alpha}$ iff $p\restriction \beta\in G^M_{v\to w\cap \beta}$
      and $p(\beta)[G^M_{v\to w\cap \beta}]\in G_v(\beta)$.
    \item A $G_v$ such that $G^M_{v\to w}$ is defined is called
      $(M,P_w)$-generic.
    \item Assume that $G_v$ is  $(M,P_w)$-generic and
      $\zeta\in w\cap M$. Then we set
      \[ G^M_{v\to w}(\zeta)=
       \{\n q[G^M_{v\to w\restriction\zeta}]:\, \exists p\, p\cup(\zeta,\n q)\in G^M_{v\to w}\}.
      \]
    \item If $v$ covers $w$, $q\in P_v$, and 
      $p$ is a $P^M_w$-condition (or $p$ is just in $M$ a 
      $P^M_w$-name for a $P^M_w$-condition),
      then $q$ is $(M,P_{v\to w},p)$-generic if $q$ forces
      that $G_v$ is $(M,P_w)$-generic and $p\in
      G^M_{v\to w}$ (or $p[G^M_{v\to w}]\in G^M_{v\to w}$,
      resp.).
  \end{itemize}
\end{Def}

The same proofs as~\ref{fct:GMtrans},~\ref{lem:pmcompwithesm} and~\ref{lem:csiisproper} give us the according results for $P_w$:
\begin{Lem}\label{lem:jhet}
  Assume that $V'$ is an extension of $V$
  \begin{itemize}
    \item $M$ and $N$ are candidates in $V$, $M\in N$,
    \item $v\in N$ and $u$ covers $v$ with respect to $N$,
    \item $w\in M$, $M\in N$ and $N$ thinks that $M$ is a candidate and that 
       $v$ covers $w$ with respect to $M$,
    \item $G_u\in V'$ is $(N,P_v)$-generic.
  \end{itemize}
  Then we get:
  \begin{enumerate}
    \item In $V$, $v$ covers $w$ with respect to $M$.
    \item If $\zeta\in v\cap N$, then
      $G^N_{u\to v}(\zeta)=Q_\zeta^{N[G^N_{u\to v}]}\cap G_u(\zeta)$.
    \item $(G^M_{u\to w})^{V'}=(G^M_{v\to w})^{N[G^N_{u\to v}]}$.
    \item In particular, $G$ is $(M,P_{u\to w})$-generic iff $N[G^N_{u\to v}]$ thinks that
      $G^N_{u\to v}$ is $(M,P_{v\to w})$-generic.
    \item If $G_u$ is $(M,P_w)$-generic and $\n\tau$ a $P^M_w$-name in $M$, then
      $\n\tau[G^M_{u\to w}]$ (calculated in $V[G_u]$)
      is the same as $\n\tau[G^M_{v\to w}]^M$ 
      (calculated in $N[G^N_{u\to v}]$).
    \item If $q$ is $(N,P_{u\to v},p)$-generic and $\alpha\in \epsilon\cap N$,
      then $q\restriction\alpha$ is $(N,P_{u\restriction\alpha \to
      v\cap\alpha},p\restriction \alpha)$-generic.
  \end{enumerate}
\end{Lem}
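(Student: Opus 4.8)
The plan is to re-run the inductions of \ref{fct:GMtrans}, \ref{lem:pmcompwithesm} and \ref{lem:csiisproper} essentially verbatim, replacing every occurrence of ``$\alpha=\zeta+1$'' by ``$\alpha$ is the $w$-successor of $\zeta$'' and every occurrence of ``$\alpha$ is a limit'' by ``$M$ thinks that $w\cap\alpha$ has no last element''. The only genuinely new feature is that the outer index set $v$ is larger than the inner one $w$ (we only assume $v\supseteq w\cap M$), so the coordinates of $v$ outside $w$ are simply ignored when forming $G^M_{v\to w}$. I would first record the covering bookkeeping: since $M\in N$ and $N$ thinks $M$ is countable, $N$ contains a surjection $\omega\to M$, hence $M\subseteq N$; combining $w\cap M\subseteq v$ (the inner covering, as seen in $N$), $w\cap M\subseteq N$, and $v\cap N\subseteq u$ (the outer covering) yields $w\cap M\subseteq u$, i.e.\ $u$ covers $w$ with respect to $M$, so that $(G^M_{u\to w})^{V'}$ is meaningful. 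I would then prove the clauses in the order (1), (2), (3), (4), (5), (6).

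Clause (1) is pure absoluteness: $v\subseteq\epsilon$ is absolute between $N$ and $V$ because $\epsilon\in\ON$ and $y\subseteq x$ is absolute when $x\in\ON$ (one of the listed absoluteness properties of ord-transitive models), while $w\cap M=w\cap\ON^M$ is computed identically in $N$ and $V$ because $M$ is ord-absolute, so $\ON^M=M\cap\ON$ is an absolute set; hence $v\supseteq w\cap M$ transfers from $N$ to $V$. Clause (2) is the exact analogue of the first item of the Facts stated just after the definition of $G^M(\zeta)$: unwinding $G^N_{u\to v}(\zeta)=\{\n q[G^N_{u\to v\restriction\zeta}]:\,\exists p\ p\cup(\zeta,\n q)\in G^N_{u\to v}\}$ against the $v$-successor step in the definition of $G^N_{u\to v}$ gives the one-line identity $G^N_{u\to v}(\zeta)=Q_\zeta^{N[G^N_{u\to v}]}\cap G_u(\zeta)$.

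The heart is the simultaneous induction on $\alpha\in\epsilon\cap M$ proving (3), (4), (5), mirroring \ref{fct:GMtrans}. Here (4) follows from (3) by the definition of genericity and (5) follows from (3) by \ref{lem:evalabs}, so the real work is (3). In the $w$-successor step with last element $\zeta$, I would expand $p\in(G^M_{u\to w\cap\alpha})^{V'}$ as ``$p\restriction\zeta\in G^M_{u\to w\cap\zeta}$ and $p(\zeta)[G^M_{u\to w\cap\zeta}]\in G_u(\zeta)$'', rewrite the first conjunct by induction hypothesis (3) inside $N[G^N_{u\to v}]$, and treat the second by clause (2): since $M$ thinks $p(\zeta)[G^M_{u\to w\cap\zeta}]\in Q_\zeta$, upward absoluteness of $\inQ$ gives $p(\zeta)[\cdots]\in Q_\zeta^{N[G^N_{u\to v}]}$ automatically, so ``$\in G_u(\zeta)$'' is equivalent to ``$\in G^N_{u\to v}(\zeta)$''. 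This identifies membership with the computation of $G^M_{v\to w\cap\alpha}$ in $N[G^N_{u\to v}]$. The $w$-limit step is immediate from the induction hypothesis and the definition, exactly as in the limit case of \ref{fct:GMtrans}.

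Finally, clause (6) is the analogue of the restriction item in those same Facts: $G^M_{u\to w}$ below $\alpha$ depends only on $G_u\restriction\alpha$, so forcing $(M,P_{u\to v},p)$-genericity restricts to forcing $(M,P_{u\restriction\alpha\to v\cap\alpha},p\restriction\alpha)$-genericity. I expect the main obstacle to be not any single computation but keeping the case distinction coherent across the three universes: the clause ``$M$ thinks $w\cap\alpha$ has (no) last element'' governs the definition of $G^M_{v\to w}$, and I must check that $N$ (which sees $M$ as a candidate) and $V'$ invoke the same clause at each $\alpha$, and that the two covering relations compose and restrict correctly. Once successor-absoluteness of the candidates and the absoluteness of $w$ and $v$ as sets of ordinals are used to align these case distinctions, each inductive step collapses onto the corresponding step already carried out in \ref{fct:GMtrans} and \ref{lem:pmcompwithesm}, with upward absoluteness of $Q_\zeta$ in the relevant forcing extension doing the same work as before.
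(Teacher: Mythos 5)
Your proposal is correct and is essentially identical to the paper's intended argument: the paper's entire proof of Lemma~\ref{lem:jhet} is the remark that the same proofs as \ref{fct:GMtrans}, \ref{lem:pmcompwithesm} and \ref{lem:csiisproper} give the corresponding results for $P_w$, and your rerun of those inductions with the $w$-successor/$w$-limit case split, the covering bookkeeping, clause (2) feeding the successor step, and (4),(5) extracted from (3) via \ref{lem:evalabs} is exactly what that remark elides. (Only a cosmetic repair: $M\subseteq N$ follows directly from ord-transitivity of $N$, since $M\in N\setminus\ON$, rather than from the surjection $\omega\to M$, which by itself only yields $M\cap N\subseteq N$.)
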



\begin{Lem}\label{lem:wilejt}
  Let $N\esm H(\chi)$, $v\in N$, 
  $(M,w)=\ordcol(N,v)$,\footnote{so either $w=v\cap N$ or $w=v\in\ON$} 
  and let $G_v$ be $P_v$-generic over $V$. Then 
  \begin{enumerate}
    \item
      $G_v$ is $N$-generic iff it is $(M,P_{v\to w})$-generic.
  \setcounter{tmp}{\value{enumi}}	
  \end{enumerate}
  If $G_v$ is $N$-generic and $p,\n\tau\in N$, then 
  \begin{enumerate}
  \setcounter{enumi}{\value{tmp}} 
    \item
      $p\in G_v$ iff $\ordcol^{N}(p)\in G^{M}_{v\to w}$, and
    \item
      $\ordcol^{N[G_v]}(\n\tau[G_v])=(\ordcol^{N}(\n\tau))[G^{M}_{v\to w}]$.
  \end{enumerate}
\end{Lem}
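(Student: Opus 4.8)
The plan is to run the same double induction that proves Lemma~\ref{lem:pmcompwithesm}, now phrased for iterations along subsets via Definition~\ref{def:iterationalongsubsets}. First I would record the set-up: since $v\subseteq\epsilon\subseteq\ON$ and the ord-collapse fixes ordinals, the footnote's dichotomy holds ($w=v\cap N$ if $v\notin\ON$, and $w=v$ if $v\in\ON$), and in either case $w\cap M=v\cap N\subseteq v$ (because $M\cap\ON=N\cap\ON$ and $w\subseteq\ON$), so $v$ covers $w$ with respect to $M$ in the sense of Definition~\ref{def:iterationalongsubsets}; hence $G^M_{v\to w}$ is the object to aim for. I write $x'$ for $\ordcol^N(x)$ and (abusing notation) also for $\ordcol^{N[G_v]}(x)$.

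Items (1) and (2) I would prove simultaneously by induction on $\alpha\in\epsilon\cap M$, exactly mirroring the successor/limit split of \ref{lem:pmcompwithesm}, except that the split is now governed by whether $M$ thinks $w\cap\alpha$ has a last element. In the successor case, say $M$ thinks $\beta$ is the last element of $w\cap\alpha$, I would take $p\in P_{v\cap\alpha}\cap N$ and chase the equivalences: $p\in G_v$ iff $p\restriction\beta\in G_v$ and $p(\beta)[G_v\restriction\beta]\in G_v(\beta)$; by the induction hypotheses (2) and (3) this is equivalent to $p'\restriction\beta\in G^M_{v\to w\cap\beta}$ and $p'(\beta)[G^M_{v\to w\cap\beta}]\in G_v(\beta)$, which by Definition~\ref{def:iterationalongsubsets} says exactly $p'\in G^M_{v\to w\cap\alpha}$. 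Here the one point that needs the structure of the iteration is that $p(\beta)[G_v\restriction\beta]\in Q_\beta^{V[G_v]}\subseteq\hco$, hence, being hereditarily countable modulo ordinals, it is left fixed by the collapse of $N[G_v]$, so $(p(\beta)[G_v])'=p'(\beta)[G^M_{v\to w\cap\beta}]$. The limit case is the formally identical transcription: for a $w$-limit $\alpha$, $p\in G_v$ iff $p\restriction\beta\in G_v$ for all $\beta\in\alpha\cap N$, which by the induction hypothesis is iff $p'\restriction\beta\in G^M_{v\to w\cap\beta}$ for all $\beta\in\alpha\cap M$, i.e.\ $p'\in G^M_{v\to w\cap\alpha}$. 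Carrying the induction up to the top of $w$ yields both (1) (at each stage $(M,P_w)$-genericity of $G^M_{v\to w}$ is preserved exactly when $N$-genericity is) and (2).

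For (3) I would run an inner induction on the rank of the name $\ntau\in N$, verbatim as in the proof of \ref{lem:pmcompwithesm}. Fix in $N$ a maximal antichain $A$ deciding whether $\ntau\in V$ and, if so, its value, and pick $a\in A\cap G_v\cap N$. If $a$ forces $\ntau=\std{x}$ then $M$ sees $a'\in G^M_{v\to w}$ forcing $\ntau'=x'$, giving $\ntau'[G^M_{v\to w}]=x'=(\ntau[G_v])'$. Otherwise $a$ forces $\ntau\notin V$, and then
\begin{multline*}
  \ntau'[G^M_{v\to w}]=\{\nsigma'[G^M_{v\to w}]:\,(\nsigma,p)\in\ntau\cap N,\ p'\in G^M_{v\to w}\}\\
  =\{(\nsigma[G_v])':\,(\nsigma,p)\in\ntau\cap N,\ p\in G_v\}
\end{multline*}
by (2) and the inner induction hypothesis; this equals $\ordcol^{N[G_v]}(\ntau[G_v])$ once one notes, by an elementarity argument in $N[G_v]$, that every element of $\ntau[G_v]\cap N[G_v]$ is already of the form $\nsigma[G_v]$ for some $(\nsigma,p)\in\ntau$ with $p\in G_v$.

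The main obstacle, as in \ref{lem:pmcompwithesm}, is not any single hard step but the bookkeeping: one must keep straight that the induction runs along $w$ inside $M$ while the corresponding stages in $V$ run along $v\cap N$, and that the two notions of ``last element of $w\cap\alpha$'' agree because ordinals are fixed by the ord-collapse and ordinal comparison is absolute between $M$ and $V$. The only genuinely forcing-theoretic input is the $\hco$-fixing fact used in the successor step; everything else is a faithful transcription of the $P_\alpha$-argument with $P_\alpha$ and $G^M_\alpha$ replaced by $P_w$ and $G^M_{v\to w}$.
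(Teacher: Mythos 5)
Your proposal is correct and is exactly the proof the paper intends: the paper proves Lemma~\ref{lem:wilejt} by remarking that the proof of Lemma~\ref{lem:pmcompwithesm} transfers verbatim to the $P_w$ setting, and your transcription carries this out faithfully, including the two points where care is actually needed --- the case split governed by whether $M$ thinks $w\cap\alpha$ has a last element (justified since $w\cap M=v\cap N$ and ordinal comparison is absolute), and the fact that $p(\beta)[G_{v\cap\beta}]\in Q_\beta\subseteq\hco$ is fixed by the ord-collapse of $N[G_v]$, which is precisely the footnote in the paper's proof of~\ref{lem:pmcompwithesm}. The rank induction you give for item~(3) likewise matches the paper's argument word for word.
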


\begin{Lem}
  If $M$ is a candidate, $w\in M$, $v$ covers $w$, and 
  $p\in P^M_w$, then there is a $(M,P_{v\to w},p)$-generic $q\in P_v$. 
\end{Lem}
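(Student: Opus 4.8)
The plan is to follow the proof of Lemma~\ref{lem:csiisproper} line by line, replacing the use of Definition/Lemma~\ref{ind:gurke} by its subset-iteration analog and the use of~\ref{fct:GMtrans} and~\ref{lem:evalabs} by Lemma~\ref{lem:jhet}. Concretely, I would first establish the following canonical-extension statement, proved by induction on $\alpha\in\epsilon\cap M$: fixing a surjection $\sigma:\omega\to M$, whenever $\gamma\leq\alpha$ are in $M$, $q\in P_v$ is $(M,P_{v\to w\cap\gamma})$-generic with $\dom(q)\subseteq v\cap M\cap\gamma$, and $p$ is (in $M$) a $P^M_{w\cap\gamma}$-name for a $P^M_{w\cap\alpha}$-condition whose restriction to $w\cap\gamma$ is forced into $G^M_{v\to w\cap\gamma}$, there is a canonical extension $q^+\in P_v$ of $q$ that is $(M,P_{v\to w\cap\alpha},p)$-generic with $\dom(q^+)\subseteq v\cap M\cap\alpha$. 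The lemma is then the special case $\gamma=\min(w)$, $q$ the trivial condition, and $p$ the given element of $P^M_w$.

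The induction splits exactly as the definition of $G^M_{v\to w}$ in~\ref{def:iterationalongsubsets}, according to whether $M$ thinks $w\cap\alpha$ has a last element. In the successor-type case, with $\beta$ the $M$-last element of $w\cap\alpha$, I would apply the induction hypothesis to get the canonical $(M,P_{v\to w\cap\beta},p\restriction\beta)$-generic $q^+$; it forces that $M'\DEFEQ M[G^M_{v\to w\cap\beta}]$ is a candidate and that $p'\DEFEQ p(\beta)[G^M_{v\to w\cap\beta}]\in Q_\beta^{M'}$. Letting $\sigma'$ be the surjection onto $M'$ canonically induced by $\sigma$, I set $q^{++}=q^+\cup\{(\beta,\gen_\beta(M',\sigma',p'))\}$, copying the successor step of~\ref{ind:gurke}. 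That $q^{++}$ is $(M,P_{v\to w\cap\alpha},p)$-generic --- i.e.\ forces every maximal antichain $A\in M$ of $P^M_{w\cap\alpha}$ to be met in exactly one point --- reduces, via Assumption~\ref{asm:absolgen} and the absoluteness of name evaluation granted by Lemma~\ref{lem:jhet}, to the $M'$-genericity of $\gen_\beta(M',\sigma',p')$, verbatim as before.

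In the limit-type case, where $M$ thinks $w\cap\alpha$ has no last element, I would run the fusion of~\ref{ind:gurke}: choose $\gamma=\alpha_0<\alpha_1<\cdots$ in $\epsilon\cap M$ cofinal in $\alpha$ with the $w\cap\alpha_n$ exhausting $w\cap M\cap\alpha$ (possible since $M$ is countable and thinks $w\cap\alpha$ has no last element), let $(D_n)_{n\in\omega}$ be the canonical $\sigma$-enumeration of the dense subsets of $P^M_{w\cap\alpha}$ lying in $M$, build names $p_n$ (with $p_0=p$) that are forced by $P^M_{w\cap\alpha_n}$ to lie in $D_n$ and below $p_{n-1}$, and let $q_n$ be the canonical $(M,P_{v\to w\cap\alpha_{n+1}},p_n\restriction\alpha_{n+1})$-extension of $q_{n-1}$ supplied by the induction hypothesis. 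Setting $q^+=\bigcup_n q_n$ (whose domain is a countable subset of $v\cap M\cap\alpha$), the verifications that $G^M_{v\to w\cap\alpha}$ meets each $D_n$ and is a filter --- the latter by the density in $M$ of the set of conditions deciding, below some coordinate, the membership of a fixed incompatible pair $r,s$ --- go through word for word.

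The main obstacle is purely bookkeeping rather than conceptual: one must check at each step that the covering requirement $v\supseteq w\cap M$ and the domain bound $\dom(q^+)\subseteq v\cap M$ persist, and that every absoluteness fact invoked --- that $M[G^M_{v\to w\cap\zeta}]$ is a candidate, that $Q_\zeta$ is upward absolutely defined in the relevant extension, and that evaluations $\n\tau[G^M_{v\to w\cap\zeta}]$ are absolute --- is precisely one of the items packaged in Lemma~\ref{lem:jhet} (which here plays the role that~\ref{fct:GMtrans} and~\ref{lem:evalabs} played originally). Since the combinatorial core, the fusion at $w$-limits of countable cofinality, is identical to that of~\ref{ind:gurke}, no new difficulty appears, which is why the text can assert that ``the same proofs'' suffice.
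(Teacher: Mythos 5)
Your proposal is correct and is exactly the argument the paper intends: the paper gives no separate proof, stating only that ``the same proofs as~\ref{fct:GMtrans},~\ref{lem:pmcompwithesm} and~\ref{lem:csiisproper} give us the according results for $P_w$'', and your write-up carries out precisely that transfer --- rerunning the canonical-extension induction of~\ref{ind:gurke} with the case split relativized to ``$M$ thinks $w\cap\alpha$ has a last element'' as in Definition~\ref{def:iterationalongsubsets}, and with Lemma~\ref{lem:jhet} supplying the absoluteness facts previously drawn from~\ref{fct:GMtrans} and~\ref{lem:evalabs}. The bookkeeping points you flag (covering $v\supseteq w\cap M$, the domain bounds, exhausting $w\cap M\cap\alpha$ by a countable sequence in the limit-type case) are indeed the only things to check, and they go through as you describe.
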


We give the following Lemma (used for $Q=$Mathias in \cite{MR2353856})
as an example for how we can use this iteration:
\begin{Lem}\label{lem:iterationapplication}
  Let $\bar B=(B_i)_{i\in I}$ be a sequence (in $V$) of Borel codes.
  Let $Q_\alpha=Q$ be the same
  nep forcing (definition) for
  all $\alpha<\epsilon$. 
  If $P_{\om1}$ forces $\bigcap\bar B=\emptyset$,
  then $P_{\epsilon}$ forces $\bigcap \bar B=\emptyset$.
\end{Lem}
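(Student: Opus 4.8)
The plan is to argue by contradiction, reflecting a putative witness into a countable elementary submodel whose trace on $\epsilon$ has countable order type, and then feeding this back into the hypothesis at that countable length. Suppose $P_\epsilon$ does not force $\bigcap\bar B=\emptyset$, and fix $p_0\in P_\epsilon$ together with a $P_\epsilon$-name $\n x$ for a real such that $p_0\forc \n x\in\bigcap\bar B$. I may assume $\epsilon\ge\om1$: if $\epsilon\le\om1$ then $P_\epsilon$ is a complete subforcing of $P_{\om1}$, and since ``$x\in B_i$'' is upward absolute for each fixed Borel code $B_i$, a point of $\bigcap\bar B$ in $V[G_\epsilon]$ would persist into any $V[G_{\om1}]\supseteq V[G_\epsilon]$, contradicting the hypothesis. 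The same observation applied to initial segments shows that the hypothesis actually yields $P_\delta\forc\bigcap\bar B=\emptyset$ for \emph{every} $\delta\le\om1$.

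Next I would pick $N\esm H(\chi)$ containing $p_0,\n x,\bar B,\epsilon,\mpar$ and the defining formula of $Q$, and set $M=\ordcol(N)$, $w=\epsilon\cap N=\epsilon\cap M$, and $\beta=\mathrm{otp}(w)<\om1$. Because every $Q_\alpha$ is the \emph{same} nep forcing $Q$, the order isomorphism $\beta\to w$ identifies the iteration along $w$ with the ordinary iteration $P_\beta$ (the Remark following Definition~\ref{def:iterationalongsubsets}); as $\beta<\om1$, the first paragraph gives $P_\beta\forc\bigcap\bar B=\emptyset$, that is, $P_w\forc\bigcap\bar B=\emptyset$ in $V$. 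By elementarity $N$, and hence $M$, also satisfies its own version of the hypothesis.

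The heart of the argument is to manufacture a length-$\beta$ witness. Using the subset-iteration machinery of Subsection~\ref{ss:subsetiteration}, with $v=\epsilon$ covering $w$ and $i(p_0)\in P^M_w$ the ord-collapse of $p_0$, the last Lemma of that subsection provides an $(M,P_{\epsilon\to w},i(p_0))$-generic condition $q\in P_\epsilon$. Forcing below $q$ produces a $V$-generic $G$ which, by Lemma~\ref{lem:wilejt}, is $N$-generic, contains $p_0$, and whose induced filter $G^M_{\epsilon\to w}$ is $M$-generic with $i(\n x)[G^M_{\epsilon\to w}]=\n x[G]$ (the real is fixed by the collapse). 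Thus $\n x[G]\in\bigcap\bar B$ lives in the extension $M[G^M_{\epsilon\to w}]$, whose transitive collapse is a genuine forcing extension of a countable transitive candidate by the length-$\beta$ iteration of $Q$. Since $P_\beta$ is forcing-equivalent to the nep iteration $P^{\textrm{nep}}_\beta$, the existence of such a candidate carrying a generic witness is, by the absoluteness of candidates (Lemma~\ref{lem:basiccandidates}(4) and Shoenfield absoluteness~\ref{lem:shoenfieldlevy}, with $\om1^V$ and $\bar B$ as parameters) together with Lemma~\ref{lem:jhet}, equivalent to $P_\beta\not\forc\bigcap\bar B=\emptyset$ holding in $V$. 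This contradicts the conclusion of the second paragraph.

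The step I expect to be the main obstacle is precisely this last transport in the case $\cf(\epsilon)=\omega$, where $w$ is cofinal in $\epsilon$ and $\n x[G]$ genuinely depends on unboundedly many coordinates. One must verify that the transitive collapse carries the forcing statement ``$i(p_0)\forc_{P^M_\epsilon}\n x\in\bigcap\bar B$'' faithfully to a forcing statement at the countable length $\beta$ (via Lemma~\ref{lem:jhet} and the internal forcing theorem~\ref{cor:pforc}), and that the Borel codes actually seen by $M$—namely those indexed by the trace of the index set of $\bar B$ on $M$—are matched correctly, so that the reflected intersection is exactly the one governed by the hypothesis. By contrast, the successor and uncountable-cofinality limit stages are routine: in the latter every real of $V[G_\epsilon]$ already appears in some $V[G_\zeta]$ with $\zeta<\epsilon$, reducing the claim to a smaller $\epsilon$ directly.
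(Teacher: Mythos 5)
Your first two steps coincide with the paper's (the hypothesis passes down to every countable length because initial segments of the countable support iteration are complete subforcings and Borel membership is absolute; then reflect into $N\esm H(\chi)$, set $M=\ordcol(N)$, $w=\epsilon\cap N$, and use $P_w\cong P_\beta$ for $\beta=\mathrm{otp}(w)$). But the heart of your third paragraph has a genuine gap. After forcing below your $M$-generic $q\in P_\epsilon$ you have a real $r=\n x[G]\in\bigcap\bar B$ realized in $M[G^M_\epsilon]$, and you claim the existence of such a candidate carrying a generic witness is ``equivalent to $P_\beta\not\forc\bigcap\bar B=\emptyset$ holding in $V$'' via Lemma~\ref{lem:basiccandidates}(4) and Lemma~\ref{lem:shoenfieldlevy}. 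That equivalence fails in the direction you need. A filter that is generic merely over a candidate says nothing about $V$-forcing: forcing statements and maximality of antichains are \emph{not} upwards absolute for nep forcings --- the paper stresses exactly this in the remark after Definition~\ref{def:nep} and in Example~\ref{ex:ctblnotabs} --- and Lemma~\ref{lem:shoenfieldlevy} only transfers the existence of candidates satisfying an \emph{internal} theory, whereas the clause ``$\bar{\n x}[\bar G]\in\bigcap_{i\in I}B_i$'' is external to the candidate. Worse, for uncountable $I$ it is not even approximable internally: $M$ only contains the codes $B_i$ for $i\in I\cap M$, so any candidate-level statement yields at most $r\in B_i$ for those $i$, which is perfectly compatible with $P_\beta\forc\bigcap\bar B=\emptyset$ in $V$. (You flag this index-trace issue yourself, but as a verification detail; it is fatal to the claimed equivalence.) A smaller slip in the same paragraph: the collapse $i(p_0)$ lies in $P^M_\epsilon$, not in $P^M_w$ (indeed $w=\epsilon\cap M\notin M$), and Lemma~\ref{lem:wilejt} with $v=\epsilon$ gives $(M,P_{\epsilon\to\epsilon})$-genericity; the notation $P_{\epsilon\to w}$ does not typecheck here.

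What the paper does instead, and what your argument is missing, is a \emph{second} reflection in the opposite direction. First, since $w$ covers $\epsilon$ with respect to $M_0$, there is an $(M_0,P_{w\to\epsilon},p'_0)$-generic condition $q_0$ in the \emph{countable} iteration $P_w$ --- a long candidate iteration is realized inside a short $V$-iteration, which is possible precisely because $\epsilon\cap M_0=w$. This converts the collapsed name into an honest $P_w$-name $\n\tau=\n\eta'_0[G^{M_0}_{w\to\epsilon}]$ in $V$, so the hypothesis now applies to a genuine $V$-forcing statement: since $P_w\cong P_\beta$ forces $\bigcap\bar B=\emptyset$, there are $\tilde q\leq q_0$ and $i_0\in I$ (found in $V$, not inside any candidate --- this is exactly how uncountable $I$ is handled) with $\tilde q\forc_{P_w}\n\tau\notin B_{i_0}$. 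Only then comes the step you attempted: collapse an $N_1\esm H(\chi)$ containing $\tilde q$ to $M_1$ and find an $(M_1,P_{\epsilon\to w},\tilde q')$-generic $q_1\in P_\epsilon$. Below $q_1$, the two evaluations of the would-be witness are identified through the transitivity of induced filters, $\tilde G^{M_0}_{w\to\epsilon}=G^{M_0}_\epsilon$ (Lemma~\ref{lem:jhet}), together with Lemma~\ref{lem:pmcompwithesm}, giving $r=\n\eta_0[G]\in B_{i_0}$ and $r\notin B_{i_0}$ simultaneously --- the contradiction. Note also that this argument is uniform in $\epsilon$; your closing case distinction on $\cf(\epsilon)$ plays no role in the correct proof.
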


\begin{proof}
  We assume that
  $\bigcap\bar B=\emptyset$ is forced by $P_{\om1}$ and 
  therefore by all $P_{\alpha}$ for $\alpha\in\om1$.
  We additionally assume towards a contradiction that
  \begin{equation}\label{eq:weth23}
    p_0\forc_{\epsilon} \n \eta_0\in \bigcap B_i.
  \end{equation}
  We fix a ``countable version'' of the name $\n \eta_0$:
  Let $N_0\esm H(\chi)$ contain $\n\eta_0$ and $p_0$.
  Let $(M_0,\n\eta_0',p'_0)$ 
  be the ord-collapse of $(N_0,\n\eta_0,p_0)$.
  Set $w=\epsilon\cap N_0=\epsilon\cap M_0$.
  In particular, $w$ is countable.

  Since $w$ covers $\epsilon$ with respect to $M_0$, we can find
  an $(M_0,P_{w\to \epsilon},p'_0)$-generic
  condition $q_0$ in $P_w$.
  Under $q_0$ we can
  define the $P_{w}$-name 
  \begin{equation}\label{eq:weth24}
    \n\tau\DEFEQ \n\eta'_0[G^{M_0}_{w\to\epsilon}].
  \end{equation}

  So whenever $q$ is in a $G_w$-generic filter,
  then $\n\tau[G_w]$ is the same as $\n\eta'_0[G^{M_0}_{w\to\epsilon}]$.

  $P_{w}$ is isomorphic to $P_\alpha$
  for some countable $\alpha$, so
  we know that $P_{w}$ forces $\n\tau\notin \bigcap \bar B$.
  In particular, we can find a $\tilde q\leq q_0$ and an $i_0\in I$
  such that 
  \begin{equation}\label{eq:weth25}
    \tilde q\forc \n\tau\notin B_{i_0}.
  \end{equation}

  Let $N_1\esm H(\chi)$ contain the previously mentioned objects.
  In particular $w\subset N_1$.
  Let $(M_1,P',\tilde q')$ be the ord-collapses of $(N_1,P_w,\tilde q)$.
  By elementarity, $P'=P^{M_1}_w$.
  Since $\epsilon$ covers $w$, we can find 
  an $(M_1,P_{\epsilon\to w},q'_0)$-generic condition $q_1$ in
  $P_{\epsilon}$.

  Let $G$ be a $P_{\epsilon}$-generic filter over $V$ 
  containing $q_1$. Set $r=\n\eta_0[G]$. 
  So $r\in\bigcap \bar B$ by~\eqref{eq:weth23}.
  On the other hand, $\tilde r\DEFEQ \n\tau'[\tilde G]$ 
  is not in $B_{i_0}$ for
  $\tilde G\DEFEQ G^{M_1}_{\epsilon\to w}$.
  Also, $\tilde r=\n\eta'_0[\tilde G^{M_0}_{w\to\epsilon}]$.
  It remains to show that $r=\tilde r$.
  This follows from transitivity (see Lemma~\ref{lem:jhet}), i.e.,
  $\tilde G^{M_0}_{w\to\epsilon}= G^{M_0}_{\epsilon\to\epsilon}$,
  and the fact that $G^{M_0}_{\epsilon\to\epsilon}=G^{M_0}_\epsilon$,
  and from elementarity (see Lemma~\ref{lem:pmcompwithesm}), i.e.,
  $(M_0[G^{M_0}_\epsilon],M_0)$ is the
  ord-collapse of $(N_0[G],N_0)$.
\end{proof}

\bibliographystyle{amsalpha}
\bibliography{nep}

\providecommand{\bysame}{\leavevmode\hbox to3em{\hrulefill}\thinspace}
\providecommand{\MR}{\relax\ifhmode\unskip\space\fi MR }
\providecommand{\MRhref}[2]{%
  \href{http://www.ams.org/mathscinet-getitem?mr=#1}{#2}
}
\providecommand{\href}[2]{#2}
\begin{thebibliography}{GKSW}

\bibitem[GKSW]{BCDBC}
Martin Goldstern, Jakob Kellner, Saharon Shelah, and Wolfgang Wohofsky,
  \emph{{Borel Conjecture and Dual Borel Conjecture}}, preprint,
  \url{http://arxiv.org/abs/1105.0823}.

\bibitem[Gol93]{MR1234283}
Martin Goldstern, \emph{Tools for your forcing construction}, Set theory of the
  reals (Ramat Gan, 1991), Israel Math. Conf. Proc., vol.~6, Bar-Ilan Univ.,
  Ramat Gan, 1993, pp.~305--360. \MR{MR1234283 (94h:03102)}

\bibitem[IS88]{MR973109}
Jaime~I. Ihoda and Saharon Shelah, \emph{Souslin forcing}, J. Symbolic Logic
  \textbf{53} (1988), no.~4, 1188--1207. \MR{MR973109 (90h:03035)}

\bibitem[Kel06]{MR2252247}
Jakob Kellner, \emph{Preserving non-null with {${\rm Suslin}\sp +$} forcings},
  Arch. Math. Logic \textbf{45} (2006), no.~6, 649--664. \MR{MR2252247
  (2007f:03074)}

\bibitem[KS05]{MR2155272}
Jakob Kellner and Saharon Shelah, \emph{Preserving preservation}, J. Symbolic
  Logic \textbf{70} (2005), no.~3, 914--945. \MR{MR2155272 (2006d:03089)}

\bibitem[RS99]{MR1613600}
Andrzej Ros{\l}anowski and Saharon Shelah, \emph{Norms on possibilities. {I}.
  {F}orcing with trees and creatures}, Mem. Amer. Math. Soc. \textbf{141}
  (1999), no.~671, xii+167. \MR{MR1613600 (2000c:03036)}

\bibitem[RS06]{MR2214118}
\bysame, \emph{Measured creatures}, Israel J. Math. \textbf{151} (2006),
  61--110. \MR{MR2214118 (2007f:03072)}

\bibitem[She98]{MR1623206}
Saharon Shelah, \emph{Proper and improper forcing}, second ed., Perspectives in
  Mathematical Logic, Springer-Verlag, Berlin, 1998. \MR{1623206 (98m:03002)}

\bibitem[She04]{MR2115943}
S.~Shelah, \emph{Properness without elementaricity}, J. Appl. Anal. \textbf{10}
  (2004), no.~2, 169--289. \MR{MR2115943 (2005m:03097)}

\bibitem[She05]{MR2151390}
\bysame, \emph{On nicely definable forcing notions}, J. Appl. Anal. \textbf{11}
  (2005), no.~1, 1--17. \MR{MR2151390 (2006c:03073)}

\bibitem[SS07]{MR2353856}
Saharon Shelah and Juris Stepr{\=a}ns, \emph{Possible cardinalities of maximal
  abelian subgroups of quotients of permutation groups of the integers}, Fund.
  Math. \textbf{196} (2007), no.~3, 197--235. \MR{MR2353856}

\bibitem[Zap08]{MR2391923}
Jind{\v{r}}ich Zapletal, \emph{Forcing idealized}, Cambridge Tracts in
  Mathematics, vol. 174, Cambridge University Press, Cambridge, 2008.
  \MR{MR2391923 (2009b:03002)}

\end{thebibliography}

\end{document}